\definecolor{darkgreen}{rgb}{0,0.5,0}
\definecolor{darkblue}{rgb}{0,0,0.7}
\definecolor{darkred}{rgb}{0.9,0.1,0.1}
\definecolor{yellow}{rgb}{0.9,0.9,0.1}
\newtheorem{proposition}{Proposition}
\newtheorem{theorem}[proposition]{Theorem}
\newtheorem{lemma}[proposition]{Lemma}
\newtheorem{corollary}[proposition]{Corollary}
\theoremstyle{remark}
\newtheorem{remark}[proposition]{Remark}
\theoremstyle{definition}
\numberwithin{equation}{section}
\numberwithin{proposition}{section}
\newcommand{\Z}{\mathbb{Z}}
\newcommand{\N}{\mathbb{N}}
\newcommand{\R}{\mathbb{R}}
\renewcommand{\P}{\mathbb{P}}
\newcommand{\Zd}{\mathbb{Z}^d}
\newcommand{\Rd}{{\mathbb{R}^d}}
\newcommand{\ep}{\varepsilon}
\renewcommand{\subset}{\subseteq}
\renewcommand{\fint}{\strokedint}
\renewcommand{\tilde}{\widetilde}
\renewcommand{\hat}{\widehat}
\newcommand{\indc}{\mathds{1}}
\renewcommand{\a}{\mathbf{a}}
\newcommand{\ahom}{\overline{\a}}
\newcommand{\A}{\mathbb{A}}
\renewcommand{\ahom}{\overline{\mathbf{a}}}
\newcommand{\Id}{\mathbf{I}}
\begin{document}

\title[Analyticity for periodic elliptic equations]{Large-scale analyticity and unique continuation for periodic elliptic equations}

\begin{abstract}
We prove that a solution of an elliptic operator with periodic coefficients behaves on large scales like an analytic function, in the sense of approximation by polynomials with periodic corrections. Equivalently, the constants in the large-scale~$C^{k,1}$ estimate scale exponentially in~$k$, just as for the classical estimate for harmonic functions. 
As a consequence, we characterize entire solutions of periodic, uniformly elliptic equations which exhibit growth like~$O(\exp(\delta|x|))$ for small~$\delta>0$. 
The large-scale analyticity also implies quantitative unique continuation results, namely a three-ball theorem with an optimal error term as well as a proof of the nonexistence of~$L^2$ eigenfunctions at the bottom of the spectrum. 
\end{abstract}

\author[S. Armstrong]{Scott Armstrong}
\address[S. Armstrong]{Courant Institute of Mathematical Sciences, New York University, USA}
\email{scotta@cims.nyu.edu}

\author[T. Kuusi]{Tuomo Kuusi}
\address[T. Kuusi]{Department of Mathematics and Statistics, P.O. Box 68 (Gustaf H\"allstr\"omin katu 2), FI-00014 University of Helsinki, Finland}
 \email{tuomo.kuusi@helsinki.fi}
 
\author[C. Smart]{Charles Smart}
\address[C. Smart]{Department of Mathematics, The University of Chicago, 5734 S. University Avenue, Chicago, Illinois 60637.}
\email{smart@math.uchicago.edu}

\keywords{}
\subjclass[2010]{35B10, 35B27, 35P05}
\date{\today}

\maketitle

\setcounter{tocdepth}{1}

\section{Introduction}

\subsection*{Motivation and informal summary of results}
We consider the linear, divergence-form, uniformly elliptic equation
\begin{equation}
\label{e.pde}
-\nabla\cdot \a \nabla u = 0 \quad \mbox{in} \ U \subseteq\Rd,
\end{equation}
where $\a(\cdot)$ is a measurable, $\Zd$--periodic map from $\Rd$ into the set of~$d$-by-$d$ matrices satisfying a uniform ellipticity assumption. Precisely, we assume there exists a constant $\Lambda\in [1,\infty)$ such that $\a \in L^\infty\left(\Rd; \R^{d\times d} \right)$ satisfies
\begin{equation}
\label{e.a.ue}
\left| \xi \right|^2 \leq \xi \cdot \a(x)\xi
\quad \mbox{and} \quad
\left| \eta \cdot \a(x)\xi \right| 
\leq \Lambda \left| \eta \right| \left| \xi \right|,
\quad
\forall x,\xi,\eta \in\Rd, 
\end{equation}
and
\begin{equation}
\label{e.periodicity}
\a(\cdot + z) = \a, \quad \forall z\in\Zd. 
\end{equation}
In particular, we make no assumption of regularity on the coefficient field~$\a(\cdot)$ beyond measurability. 

\smallskip

This paper concerns the behavior of solutions of~\eqref{e.pde} on length scales much larger than the unit scale on which the coefficients oscillate. The particular focus is on regularity and quantitative unique continuation properties of solutions. Our main result (see Theorem~\ref{t.analyticity} below) states that, on large scales, solutions of~\eqref{e.pde} possess higher-order regularity properties analogous to the classical pointwise estimates for the derivatives of harmonic functions which imply analyticity. This roughly means that a solution of~\eqref{e.pde} can be locally approximated by a certain family of ``heterogeneous polynomials'' (polynomials with periodic corrections) with the same precision as the approximation of a harmonic function by its Taylor polynomial. We call this a ``large-scale analyticity'' estimate. 

\smallskip

Large-scale regularity estimates, which play a fundamental role in the theory of homogenization, originated in the work of Avellaneda and Lin~\cite{AL1,AL2}. A qualitative version of large-scale $C^{k,1}$ estimates were proved in~\cite{AL2} for every~$k\in\N$ in the form of a Liouville theorem characterizing all entire solutions with at most polynomial growth. The large-scale analyticity estimate we prove can be considered to be a quantification of the dependence in~$k$ of the prefactor constant in the~$C^{k,1}$ estimate (it is exponential in~$k$) and of the ``minimal scale'' on which it is valid (it is linear in $k$). Moreover, each  of these estimates are optimal for~$k\gg1$.

\smallskip

We also present three consequences of the large-scale analyticity estimate. The first is a Liouville-type result (see Theorem~\ref{t.Liouville} below) which characterizes, for a small exponent~$\delta>0$, the set of solutions of~\eqref{e.pde} in the whole space $\Rd$ which grow at most like~$\exp(\delta|x|)$ as~$|x|\to \infty$. We show that any such solution can be written as an infinite series of heterogeneous polynomials, analogous to a power series representation. This can be seen as a qualitative version of large-scale analyticity. 

\smallskip

The second application is a quantitative unique continuation result in the form of a so-called ``three-ball theorem'' (see Theorem~\ref{t.quc}). It states roughly that given, three balls $B_r\subseteq B_s\subseteq B_R$ with $R/s = s/r$ and a solution $u$ of~\eqref{e.pde} in~$B_R$, the ratio $\| u \|_{L^2(B_R)} / \| u \|_{L^2(B_s)}$ is controlled by $\| u \|_{L^2(B_s)} / \| u \|_{L^2(B_r)}$, plus an error term which depends on the scale~$r$. Recall that the strong unique continuation property is well known to hold for elliptic equations with Lipschitz coefficients~\cite{AKS,GL}, but is false, in general, even for equations with coefficients belonging to~$C^{0,\alpha}$ for every $\alpha\in (0,1)$: see~\cite{P,Mi,Fil}. Therefore quantitative unique continuation estimates necessarily depend on the Lipschitz norm of the coefficients and degenerate on large length scales. Here we prove the opposite: a quantitative unique continuation result which becomes stronger as the length scale becomes larger and is completely useless below the unit scale.

\smallskip

One of the main motivations for studying unique continuation properties lies in their implications for the spectrum of the operator. It is a long-standing conjecture that the spectrum of the elliptic operator~$-\nabla \cdot \a \nabla $ with periodic \emph{and sufficiently smooth} coefficients~$\a(\cdot)$ must be absolutely continuous. For a complete discussion of the history and recent progress on this problem, which is still wide-open, we refer to the survey of Kuchment~\cite[Section 6]{Ku}. We mention only that the only progress made on this question for equations with variable coefficients is the work of Friedlander~\cite{F}, who proved the conjecture under some symmetry assumptions on the coefficients which are unfortunately rather restrictive. In the negative direction, Filonov~\cite{Fil} constructed an explicit example of a coefficient field~$\a(\cdot)$ belonging to~$C^{0,\alpha}$ for every $\alpha<1$ such that the operator $-\nabla \cdot \a\nabla$ admits a nontrivial and compactly-supported eigenfunction (note that compact support allows the example to be periodized). 

\smallskip

The third consequence of the large-scale analyticity estimate we present asserts that the operator~$-\nabla \cdot\a\nabla$ has no $L^2$ eigenfunctions near the bottom of the spectrum (see Theorem~\ref{t.bottom} below). We emphasize that this result is valid without regularity assumptions on~$\a(\cdot)$ and therefore, in view of the counterexample of~\cite{Fil} mentioned above, cannot be improved. Indeed, the result of~\cite{Fil} shows the optimality of each of the four theorems presented below.

\subsection*{Statement of the main results}
Before presenting the main results, we must introduce some notation. For further notation, including the notation for multiindices and tensors, see Section~\ref{ss.prelim}.
As we find it convenient to work with cubes rather than balls, we denote, for every $r>0$, 
\begin{equation}
Q_r:= \left( -\frac12r,\frac12 r \right)^d. 
\end{equation}
The tensor of~$k$th order partial derivatives of a function~$u$ is denoted~$\nabla^k u$. For $u \in L^p(U)$ with $p\in[1,\infty)$ and $0<|U|<\infty$, we denote the volume-normalized $L^p(U)$ norm by
\begin{equation} 
\label{e.L2norm}
\left\| u \right\|_{\underline{L}^p(U)} := \left( \fint_{U} |u(x)|^p \, dx \right)^{\frac1p} := \left( \frac{1}{|U|}\int_{U} |u(x)|^p \, dx \right)^{\frac1p} .
\end{equation}
For each $m\in\N$, we denote the linear space of real-valued, homogeneous polynomials of order~$m$ on $\Rd$ by
\begin{equation}
\mathbb{P}_m^*:=
\left\{ 
w \, : \,
w =\sum_{|\alpha|=m} c_\alpha x^\alpha, \ c_\alpha\in\R
\right\}
\end{equation}
and the linear of all real-valued polynomials of order at most $m$ by 
\begin{equation}
\mathbb{P}_m:=
\left\{ 
w \, : \,
w = \sum_{n=0}^m w_n, \ w_n \in \mathbb{P}_n^*
\right\}.
\end{equation}
For convenience we also denote $\P_{-1} := \P_{-2} := \{ 0 \}$. 

\smallskip

In order to present the statement of the large-scale analyticity result, we briefly describe the finite-dimensional vector space~$\A_m$ of ``heterogeneous polynomials of degree at most $m\in\N$'' and refer the reader to Section~\ref{s.correctors} for more details. The result of~\cite{AL2} states that, for every $m\in\N$, $p\in \P_{m-2}$ and $u\in H^1_{\mathrm{loc}}(\Rd)$ satisfying the equation
\begin{equation}
-\nabla \cdot \a\nabla u = p \quad \mbox{in} \ \Rd
\end{equation}
and the growth condition
\begin{equation}
\limsup_{r\to \infty} 
r^{-(m+1)}
\left\| u \right\|_{\underline{L}^2(Q_r)} 
= 0,
\end{equation}
one can find a polynomial $q \in \P_{m}$ such that 
\begin{equation}
\label{e.uformq}
u(x) = \sum_{k=0}^m \nabla^kq(x) : \phi^{(k)}(x),
\end{equation}
where $\phi^{(k)}$ is the \emph{tensor of $k$th order correctors}, which are constructed in Section~\ref{s.correctors}, and the colon denotes the tensor contraction, see~\eqref{e.colon}. We think of such solutions as the correct notion of ``intrinsic polynomials of degree at most~$m$'' and so we define, for each~$m\in\N$,
\begin{equation}
\mathbb{A}_m:=
\left\{ 
\psi = \sum_{k=0}^m 
\left( \nabla^k q : \phi^{(k)} \right)
\,:\, 
q\in \P_m
\right\}. 
\end{equation}
Given~$u$ of the form~\eqref{e.uformq}, we have that $-\nabla \cdot \a\nabla u = \mathscr{A}q$, where $\mathscr{A}$ is the higher-order homogenized operator. It has the form
\begin{equation*}
\mathscr{A} u := 
\sum_{n=2}^\infty 
\ahom^{(n)} : \nabla^n u,
\end{equation*}
where the tensors $\ahom^{(n)}$ are the higher-order homogenized tensors which arise in the construction of the higher-order correctors (see Section~\ref{s.correctors}). We note that $\ahom^{(2)}=:\ahom$ is the familiar homogenized matrix arising in classical homogenization. It is the equation $-\mathscr{A} u = 0$ which should be considered as the ``true'' homogenized equation, rather than $-\nabla\cdot\a\nabla u =0$, which is merely the leading order approximation of the former. We also denote the subset of~$\A_m$ consisting of~$\a(\cdot)$--harmonic functions by 
\begin{equation}
\label{e.Am0.intro}
\mathbb{A}^0_m:=
\left\{ 
u = \sum_{k=0}^m 
\left( \nabla^k q : \phi^{(k)} \right)
\,:\, 
q\in \P_m, \ \mathscr{A}q = 0
\right\} .  
\end{equation}

\smallskip

The classical pointwise estimate for harmonic functions states that there exists~$C(d)<\infty$ such that, for every harmonic function~$u$ in~$Q_R$ and~$m\in\N$,  
\begin{equation}
\label{e.harm1}
\left| \nabla^m u (0) \right| 
\leq 
\left( \frac{Cm}{R} \right)^m \left\| u \right\|_{\underline{L}^1(Q_R)}.
\end{equation}
We can restate this in terms of polynomial approximation as follows: there exists~$C(d)<\infty$ such that, for every harmonic function~$u$ in~$B_R$ and~$m\in\N$, there exists $p\in\P_m$ such that, for every $r\in (0,R)$, 
\begin{equation}
\label{e.harm2}
\left\| u - p \right\|_{L^\infty(Q_r)} 
\leq
\left( \frac{Cr}{R} \right)^m \left\| u \right\|_{\underline{L}^1(Q_R)}.
\end{equation}
Indeed, one may take~$p$ to be the $m$th order Taylor polynomial of~$u$ at the origin (which we note is also harmonic) and combine~\eqref{e.harm1} with Taylor's theorem. 

\smallskip

The main result of the paper is the following generalization of~\eqref{e.harm2} to equations with variable, periodic coefficients.

\begin{theorem}[Large-scale analyticity]
\label{t.analyticity}
There exists~$C(d,\Lambda)<\infty$ such that, for every $m\in\N$ with $m\geq 0$, $R\in [2Cm,\infty)$ and solution $u\in H^1(Q_R)$ of
\begin{equation}
-\nabla \cdot \a\nabla u = 0 \quad \mbox{in} \ Q_R,
\end{equation}
there exists $\psi \in \A_{m}^0$ such that, for every $r\in \left[Cm, R\right]$,
\begin{equation}
\label{e.Cm1}
\left\| u - \psi \right\|_{\underline{L}^2(Q_r)} 
\leq 
\left( \frac{Cr}{R} \right)^{m+1} \left\| u \right\|_{\underline{L}^2(Q_R)}. 
\end{equation}
\end{theorem}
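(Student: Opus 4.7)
I would argue by a Campanato-style excess-decay iteration in which the per-step improvement factor depends on neither $m$ nor the ambient scale. Define
\[
E_m(u,r) := \inf_{\psi \in \A_m^0} \|u - \psi\|_{\underline{L}^2(Q_r)}.
\]
The main task is to prove a one-step improvement: there exist $\theta \in (0,1)$ and $C_0$ depending only on $(d,\Lambda)$ such that, for every $\a$-harmonic $u$ on $Q_r$ with $r \geq C_0 m$,
\[
E_m(u, \theta r) \leq \theta^{m+1}\, E_m(u, r).
\]
Iterating this geometrically from $R$ down to $r$, while tracking near-minimizers consistently inside the affine space $\A_m^0$, produces a single $\psi \in \A_m^0$ realizing~\eqref{e.Cm1}.

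\textbf{Executing the one-step improvement.} Let $\psi_* \in \A_m^0$ nearly attain $E_m(u,r)$, and set $v := u - \psi_*$; then $v$ is $\a$-harmonic with $\|v\|_{\underline L^2(Q_r)} \lesssim E_m(u,r)$. I would construct $\bar v$ solving the leading-order homogenized equation $-\nabla \cdot \ahom \nabla \bar v = 0$ (or its higher-order refinement $\mathscr{A}\bar v = 0$ truncated at order~$m$) on a slightly smaller cube with appropriate boundary data, together with a quantitative higher-order two-scale expansion
\[
\biggl\| v - \sum_{k=0}^{m} \nabla^k \bar v : \phi^{(k)} \biggr\|_{\underline L^2(Q_{r/2})} \leq \left( \frac{C m}{r} \right)^{m+1} \|v\|_{\underline L^2(Q_r)}.
\]
The classical pointwise estimate~\eqref{e.harm2} applied to $\bar v$ (whose leading-order equation has constant coefficients) yields a polynomial $q \in \P_m$ with $\|\bar v - q\|_{\underline L^2(Q_{\theta r})} \leq (C\theta)^{m+1} \|\bar v\|_{\underline L^2(Q_{r/2})}$. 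Setting $\psi' := \psi_* + \sum_{k=0}^m \nabla^k q : \phi^{(k)} \in \A_m^0$ and combining the two bounds, with $\theta$ fixed small and the condition $r \geq C_0 m$ absorbing the homogenization error, completes the one-step estimate.

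\textbf{Main obstacle.} The delicate point is the quantitative two-scale approximation with constants uniform in $m$; this is precisely what forces the linear-in-$m$ minimal scale $r \gtrsim m$. Each successive order in the expansion contributes a corrector term whose $\underline L^2$ size on $Q_r$ grows at most exponentially in $k$, balanced against a gain of $r^{-k}$ per derivative; the two effects equilibrate when $r \sim m$. Establishing the requisite growth bounds on the correctors $\phi^{(k)}$ and on the higher-order homogenized tensors $\ahom^{(n)}$, so that the tail of the formal series defining $\mathscr A$ is genuinely lower order on scales $r \gg m$, is where the bulk of the technical work lies. Once those bounds are in hand, the iteration and the polynomial bookkeeping required to extract a single $\psi \in \A_m^0$ valid for all $r \in [C_0 m, R]$ are routine.
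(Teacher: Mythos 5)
Your plan hinges entirely on the displayed ``quantitative higher-order two-scale expansion''
\begin{equation*}
\biggl\| v - \sum_{k=0}^{m} \nabla^k \bar v : \phi^{(k)} \biggr\|_{\underline L^2(Q_{r/2})} \leq \left( \frac{C m}{r} \right)^{m+1} \|v\|_{\underline L^2(Q_r)},
\end{equation*}
with $C$ independent of $m$, and this is a genuine gap: it is asserted, not proved, and it is essentially the whole difficulty of the theorem rather than an available input. Known higher-order expansion/harmonic-approximation arguments lose a factor at each order (boundary layers, interior derivative estimates for $\bar v$ propagated through $m$ orders), and the paper points out that quantifying that route gives at best a prefactor $\exp(Cm^2)$, which is fatal here: in your excess-decay scheme the per-step error must beat $\theta^{m+1}\sim e^{-cm}$ on scales $r\gtrsim m$, so any loss worse than $C^m$ in the expansion error forces an exponentially large minimal scale instead of $r\geq Cm$. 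If instead you only use the first-order (or fixed finite-order) expansion, the one-step error is of size $r^{-\alpha}$ and again cannot beat $\theta^{m+1}$ unless $r\geq C^m$. A further problem is your fallback of solving ``$\mathscr{A}\bar v=0$ truncated at order $m$'' with boundary data: $\mathscr{A}$ (and its truncations) is not elliptic, so this boundary value problem is not well posed; in the paper $\mathscr{A}$ is only ever inverted on polynomials. Your heuristic that corrector growth $C^k$ balances the gain $r^{-k}$ at $r\sim m$ is also off (that balance occurs at $r\sim C$); the linear-in-$m$ minimal scale has a different origin.

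For contrast, the paper deliberately avoids $\ahom$-harmonic approximation altogether. It controls the low frequencies by the Moser--Struwe observation that integer translates preserve the equation, giving the finite-difference bounds $\|D^m u\|_{L^2(Q_1)}\leq (Cm/R)^m\|u\|_{\underline L^2(Q_R)}$; it then chooses the unique $\psi\in\A_m$ matching $D^k\hat u(0)$ for $k\leq m$, controls the small scales by a lattice Poincar\'e inequality, and handles the nonvanishing right-hand side $p=-\nabla\cdot\a\nabla\psi\in\P_{m-2}$ via a Gaussian-weighted Caccioppoli estimate (using Hermite polynomial identities) in which $p$ does not appear on the right; finally $\psi$ is corrected to an element of $\A_m^0$ by Lemma~\ref{l.polyhit.bound}. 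If you want to salvage your approach you would have to actually prove the $m$-th order expansion estimate with a uniform base $C$, and there is no known argument for that which does not already contain the content of Theorem~\ref{t.analyticity}.
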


The main novelty in the statement of Theorem~\ref{t.analyticity} is the fact that the constant~$C$ on the right side of~\eqref{e.Cm1} depends only on $(d,\Lambda)$ and, in particular, does not depend on~$m$. It is well-known result, essentially due to~\cite{AL2} (see for instance~\cite[Theorem 3.6]{AKMbook} for the complete statement), that~\eqref{e.Cm1} is valid if we replace the right side by 
\begin{equation}
C(m,d,\Lambda) \left( \frac{r}{R} \right)^{m+1} 
\left\| u \right\|_{\underline{L}^2(Q_R)}. 
\end{equation}
Since this estimate is qualitative in its dependence in~$m$, it unfortunately gives no information regarding the ratio $r/R$ of length scales on which it is useful. By carefully quantifying the dependence on~$m$ from previous arguments, one at best obtains $C(m,d,\Lambda) =  \exp\left( C(d,\Lambda){m^2} \right)$, which is much worse than what is given by Theorem~\ref{t.analyticity}, namely $C(m,d,\Lambda) = \exp(C(d,\Lambda)m)$. Since the latter is the same as observed in the classical estimate~\eqref{e.harm2} for harmonic functions, it is therefore optimal. 

\smallskip

The reason we use the modifier ``large-scale'' in describing Theorem~\ref{t.analyticity} is due to the restriction~$r \geq Cm$ in~\eqref{e.Cm1}, which does not appear in~\eqref{e.harm2}. This restriction is necessary and sharp in the sense that, without at least an assumption of Lipschitz regularity for~$\a(x)$, the statement of the theorem would be false if the restriction~$r\geq Cm$ is replaced by~$r \geq \ep m$ for sufficiently small~$\ep$. This can be seen\footnote{One can add a dummy variable $x_{d+1}$ and multiply by $\exp(\lambda^{\frac12}x_{d+1})$ to make an eigenfunction with eigenvalue $\lambda$ into a solution with zero right-hand side in one more dimension. } from the example of the compactly supported eigenfunction constructed in~\cite{Fil}. 

\smallskip

Perhaps a better way to make the motivation behind the terminology clear is to present the following rephrasing of Theorem~\ref{t.analyticity}.

\begin{corollary}
\label{c.analyticity.exp}
There exists~$c(d,\Lambda)\in (0,1]$ such that, for every~$R\in \left[c^{-1},\infty\right)$ and solution $u\in H^1(Q_R)$ of
\begin{equation}
-\nabla \cdot \a\nabla u = 0 \quad \mbox{in} \ Q_R,
\end{equation}
there exists $\psi \in \A_{\lfloor R \rfloor }^0$ such that, for every $r\in \left[1, cR\right]$,
\begin{equation}
\label{e.Cm1exp}
\left\| u - \psi \right\|_{\underline{L}^2(Q_r)} 
\leq 
\exp(-cR)
\left\| u \right\|_{\underline{L}^2(Q_R)}. 
\end{equation}
\end{corollary}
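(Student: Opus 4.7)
The plan is to derive Corollary~\ref{c.analyticity.exp} directly from Theorem~\ref{t.analyticity} by choosing the polynomial degree $m$ as a small linear function of $R$, so that the prefactor $(Cr/R)^{m+1}$ becomes exponentially small on scales comparable to $R$, and then transferring the bound to smaller scales by a trivial volume comparison. No new analytic content is needed beyond Theorem~\ref{t.analyticity} itself.

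Concretely, let $C = C(d,\Lambda)$ denote the constant from Theorem~\ref{t.analyticity} and fix $\alpha := 1/(4C^2)$. For $R$ larger than a constant $R_0(d,\Lambda)$ to be chosen, set $m := \lfloor \alpha R \rfloor \geq 1$. Then $R \geq 2Cm$, so Theorem~\ref{t.analyticity} supplies a $\psi \in \A^0_m$ with
\[
\|u-\psi\|_{\underline{L}^2(Q_r)} \leq \Bigl(\frac{Cr}{R}\Bigr)^{m+1}\|u\|_{\underline{L}^2(Q_R)}, \qquad r\in [Cm, R].
\]
Any $q\in\P_m$ belongs a fortiori to $\P_{\lfloor R\rfloor}$, so $\A^0_m\subseteq \A^0_{\lfloor R\rfloor}$ and $\psi$ already lies in the space required by the corollary. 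For $r\in[Cm, R/(2C)]$ we have $Cr/R\leq 1/2$, so the right-hand side above is at most $2^{-(m+1)}\leq \exp(-c_1 R)$ with $c_1:=\alpha\log 2$. For the smaller scales $r\in[1,Cm]$, the inclusion $Q_r\subseteq Q_{Cm}$ combined with the volume normalization in~\eqref{e.L2norm} yields
\[
\|u-\psi\|_{\underline{L}^2(Q_r)} \leq (Cm/r)^{d/2}\|u-\psi\|_{\underline{L}^2(Q_{Cm})} \leq R^{d/2}\exp(-c_1 R)\|u\|_{\underline{L}^2(Q_R)},
\]
and taking $R_0$ large enough so that $R^{d/2}\leq \exp(c_1 R/2)$ for all $R\geq R_0$, this is in turn bounded by $\exp(-c_1 R/2)\|u\|_{\underline{L}^2(Q_R)}$.

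Setting $c := \min\{c_1/2,\,1/(2C),\,1/R_0\} \in (0,1]$, the condition $c\leq 1/(2C)$ ensures $cR \leq R/(2C)$, so that the two subranges $[1,Cm]$ and $[Cm,cR]$ together cover $[1,cR]$ with no gap, and combining the two bounds above yields~\eqref{e.Cm1exp} for every $r\in[1,cR]$ and $R\geq c^{-1}$. The proof is therefore a pure rescaling/bookkeeping exercise; the only point worth monitoring is the mutual consistency of the constraints on $c$ (namely that $c$ must simultaneously control the exponential rate, the upper endpoint of the scale interval, and the minimal scale through $R_0$), which is the reason for taking the minimum of the three quantities above.
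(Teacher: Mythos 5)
Your proof is correct and follows essentially the same route as the paper: apply Theorem~\ref{t.analyticity} with $m$ chosen proportional to $R$, so that $(Cr/R)^{m+1}$ becomes exponentially small at scales comparable to $\delta R$, and note $\A^0_m\subseteq\A^0_{\lfloor R\rfloor}$. The only (harmless) difference is in passing to the small scales $r\in[1,Cm]$: the paper applies the large-scale $C^{0,1}$ estimate of Lemma~\ref{l.C01C11} to the $\a$--harmonic function $u-\psi$, losing only a constant factor, whereas you use the crude volume comparison and absorb the resulting polynomial factor $R^{d/2}$ into the exponential by enlarging $R_0$ --- both are fine.
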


We see from~\eqref{e.Cm1exp} that our large-scale analyticity does not yield ``literal'' analyticity in the sense that an arbitrary solution can be expressed locally as a ``power series,'' interpreted as a limit of elements of $\A_m$ with $m\to \infty$. What Corollary~\ref{c.analyticity.exp} says however is that this is valid up to an error which is exponentially small in the length scale. The example of~\cite{Fil} implies moreover that this is optimal.

\smallskip

As a first application of Theorem~\ref{t.analyticity}, we obtain a characterization of entire solutions of~\eqref{e.pde} which have growth at most like a ``slow'' exponential. That is, we will show that a solution in the whole space which does not grow faster than $\exp(c|x|)$ can indeed be written as a ``power series''---a result which is not very surprising in view of Corollary~\ref{c.analyticity.exp}. For each~$\delta>0$, let us denote 
\begin{equation*}
\mathbb{P}_\infty(\delta) 
:= 
\left\{ 
w \, : \,
w = \sum_{n=0}^\infty w_n, \ w_n \in \mathbb{P}_n^*, \
\sum_{n=0}^\infty 
\delta^{-n} \left| \nabla^n w_n(0) \right| 
< \infty
\right\}.
\end{equation*}
Using bounds on the $k$th order correctors and homogenized tensors which (are proved in Section~\ref{s.correctors} and) state that these grow at most like $C^k$, it follows that, for every~$\delta>0$ sufficiently small and $q \in \P_\infty(\delta)$, the series
\begin{equation}
u := 
\sum_{k=0}^\infty \nabla^kq:\phi^{(k)}
\end{equation}
is absolutely convergent (locally uniformly in~$\Rd$) and the expression~$\mathscr{A}q$ is well-defined. For such~$\delta$,  we may define the space 
\begin{equation}
\A_\infty(\delta):=
\left\{ 
\psi = \sum_{k=0}^\infty 
\left( \nabla^k q:\phi^{(k)} \right)
\, :\,
q \in \P_\infty(\delta) 
\right\}
\end{equation}
as well as $\A_\infty^0(\delta)$ in analogy with~\eqref{e.Am0.intro}. The following theorem asserts that this space contains all solutions of~\eqref{e.pde} in~$\Rd$ which grow like a slow exponential.

\begin{theorem}[Liouville theorem]
\label{t.Liouville}
There exists~$C(d,\Lambda)<\infty$ and~$\delta_0(d,\Lambda)>0$ such that, for every $u\in H^1_{\mathrm{loc}}(\Rd)$ and $\delta \in (0,\delta_0]$ satisfying
\begin{equation}
-\nabla \cdot \a\nabla u = 0 \quad \mbox{in}\ \Rd 
\end{equation}
and
\begin{equation}
\limsup_{r\to \infty}\ \exp\left( - \delta r \right) \left\| u \right\|_{\underline{L}^2(Q_r)} 
< \infty,
\end{equation}
we have that $u \in \A_\infty(C\delta)$. 
\end{theorem}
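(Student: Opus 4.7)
The plan is to construct the power-series template $q \in \P_\infty(C\delta)$ as a suitable limit of the polynomial templates associated to the large-scale analytic approximants of Corollary~\ref{c.analyticity.exp}, then to verify that the corresponding heterogeneous series $\sum_k \nabla^k q : \phi^{(k)}$ reconstructs~$u$.

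\emph{Approximants at each scale.} For each $R \geq c^{-1}$, with $c = c(d,\Lambda)$ as in Corollary~\ref{c.analyticity.exp}, the corollary produces $\psi_R \in \A_{\lfloor R \rfloor}^0$ satisfying
\[
\|u - \psi_R\|_{\underline{L}^2(Q_r)} \leq \exp(-cR)\,\|u\|_{\underline{L}^2(Q_R)}, \qquad r \in [1, cR].
\]
Combined with the growth hypothesis $\|u\|_{\underline{L}^2(Q_R)} \lesssim \exp(\delta R)$ and the choice $\delta_0 := c/2$, this gives the uniform decay $\|u - \psi_R\|_{\underline{L}^2(Q_r)} \lesssim \exp(-cR/2)$, so $\psi_R \to u$ in $L^2_{\mathrm{loc}}(\Rd)$ at exponential rate. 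Each $\psi_R$ is identified, via the correspondence from Section~\ref{s.correctors}, with a unique polynomial template $q_R \in \P_{\lfloor R \rfloor}$ with $\mathscr{A} q_R = 0$; write its homogeneous decomposition as $q_R = \sum_{n=0}^{\lfloor R \rfloor} w_{R,n}$ with $w_{R,n} \in \P_n^*$.

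\emph{Extraction and Cauchy property.} The corrector bounds of Section~\ref{s.correctors} should furnish an estimate of the form
\[
|\nabla^n w_{R,n}(0)| \leq K^n\, n!\, r^{-n}\,\|\psi_R\|_{\underline{L}^2(Q_r)}, \qquad r \in [Cn,\, cR],
\]
with constants $C, K$ depending only on $(d, \Lambda)$ and independent of $R$. Applied to the difference $\psi_{R'} - \psi_R$, this yields $|\nabla^n w_{R',n}(0) - \nabla^n w_{R,n}(0)| \lesssim K^n n! r^{-n} \exp(-cR/2)$, which is summable in $R$ for each fixed $n$ once $R \geq n/(c\delta)$; hence $w_{R,n}$ is Cauchy in the finite-dimensional space $\P_n^*$, and we set $w_n := \lim_R w_{R,n}$ and $q := \sum_n w_n$. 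Applied directly to $\psi_R$ with $\|\psi_R\|_{\underline{L}^2(Q_r)} \lesssim \exp(\delta r)$, the same estimate and the optimal scale $r = n/\delta$ (admissible once $R \geq n/(c\delta)$) yield, by Stirling,
\[
|\nabla^n w_n(0)| \lesssim K^n\, n!\, (n/\delta)^{-n}\, \exp(n) \lesssim \sqrt{n}\,(K\delta)^n,
\]
up to polynomial-in-$n$ factors absorbed into the constant. Consequently $\sum_n (2K\delta)^{-n}\,|\nabla^n w_n(0)| < \infty$, so $q \in \P_\infty(2K\delta)$.

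\emph{Reconstruction and main obstacle.} Set $\psi := \sum_k \nabla^k q : \phi^{(k)} \in \A_\infty^0(2K\delta)$; by the exponential-in-$k$ corrector bounds alluded to in Section~\ref{s.correctors}, the series converges locally uniformly in $\Rd$. The termwise convergence $w_{R,n} \to w_n$ combined with those bounds gives $\psi_R \to \psi$ in $L^2_{\mathrm{loc}}(\Rd)$, while the first step gives $\psi_R \to u$; uniqueness of the limit then forces $u = \psi \in \A_\infty(2K\delta)$, which is the desired conclusion with $C := 2K$. The principal technical difficulty is the extraction estimate used throughout the middle step: one must control $\nabla^n w_{R,n}(0)$ — the $n$-th ``Taylor coefficient'' of the template $q_R$ — by the $\underline{L}^2$ norm of the heterogeneous polynomial $\psi_R$ on a cube $Q_r$, with prefactor only exponential in $n$ at a base $K = K(d, \Lambda)$ (and crucially independent of the degree $\lfloor R \rfloor$ of the template). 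Morally this should combine (i) the bi-Lipschitz equivalence between $q$ and $\psi^q$ on scales $r \gtrsim \deg q$ coming from the sublinear growth of the correctors at their intrinsic scale, (ii) a Bernstein–Markov type complex-analytic extraction of $\nabla^n q(0)$ from the $\underline{L}^2$ norm of $q$ on $Q_r$ with $K^n n! r^{-n}$ cost, and (iii) the identity $\nabla^n q(0) = \nabla^n w_n(0)$ for the homogeneous decomposition. The constant $\delta_0$ is then tuned to the resulting $K$.
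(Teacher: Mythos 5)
Your overall architecture (build approximants at each scale, show their polynomial templates converge coefficientwise, reassemble the limit and identify it with $u$) is in the same spirit as the paper's argument, but the step you flag as the ``principal technical difficulty'' is in fact a genuine gap, and it is precisely the point the paper's proof is engineered to avoid. Your extraction estimate
$\left| \nabla^n w_{R,n}(0) \right| \leq K^n n!\, r^{-n} \left\| \psi_R \right\|_{\underline{L}^2(Q_r)}$ for $r\in[Cn,cR]$, with constants independent of the degree $\lfloor R\rfloor$ of the template, is not a ``corrector bound from Section~\ref{s.correctors}.'' The only tool the paper provides for passing from a heterogeneous polynomial $\psi$ back to its template coefficients is~\eqref{e.corr.growth.q} in Lemma~\ref{l.corr.growth}, which bounds $\left|\nabla^n q(0)\right|$ by a sum over \emph{all} intrinsic differences $\left|D^k\hat\psi(0)\right|$ for $n\le k\le \deg q$, with weights $\left(Cm/(k+1)\right)^k$ that grow like $C^{\deg q}$ at the top order. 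Converting those differences into $\left\|\psi\right\|_{\underline{L}^2(Q_r)}$ via Lemma~\ref{l.diffbound} forces the observation scale to be at least a large constant times the degree, not a constant times $n$. But in your Cauchy step you must apply the estimate to $\psi_{R'}-\psi_R\in\A^0_{\lfloor R'\rfloor}$, whose smallness is only known on cubes of side up to $cR$, while its degree is $\lfloor R'\rfloor$, which is unbounded relative to $cR$ (and even for consecutive or dyadic $R'$ the admissible scale $\geq C\lfloor R'\rfloor$ exceeds the range $[1,cR]$ where you have decay). So the crucial estimate neither follows from the cited lemmas nor is proved; establishing it at scales $r\sim Cn$ uniformly in the degree would itself require an argument of essentially the same depth as the theorem.

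For comparison, the paper avoids this degree-dependence entirely by choosing a different family of approximants: for each $m$ it takes the unique $\psi_m\in\A_m$ matching the \emph{intrinsic} differences, $D^k\hat\psi_m(0)=D^k\hat u(0)$ for $k\le m$ (for which~\eqref{e.Cm1} still holds by Remark~\ref{r.pick.psi}). Then $\psi_{m+1}-\psi_m$ has all intrinsic differences vanishing up to order $m$, so the sum in~\eqref{e.corr.growth.q} collapses to the single top-order term, and the needed decay comes from Lemma~\ref{l.diffbound} applied to $u$ at scale $r=C\delta^{-1}m$ together with the growth hypothesis, giving $\left|D^m\hat u(0)\right|\le (C\delta)^m$ and hence $\left|\nabla^n(q_m-q_{m+1})(0)\right|\le (C\delta)^{m+1}$. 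Theorem~\ref{t.analyticity} is then invoked only once at the end to identify $u$ with the limit $\psi$. If you want to salvage your route, the most direct fix is to replace the $\A^0$-approximants from Corollary~\ref{c.analyticity.exp} by these intrinsic-difference-matching approximants, i.e.\ to reduce to comparing consecutive degrees where only the top coefficient moves; as written, the extraction estimate that carries your entire middle step is unsupported.
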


\smallskip

We next present a quantitative unique continuation result, which comes in the form of a three-ball theorem. 
To our knowledge, the first quantitative unique continuation result for periodic coefficients appeared in the recent work of Lin and Shen~\cite{LS}, who proved a large-scale doubling condition for solutions of~\eqref{e.pde}. Subsequently, a statement very similar to our Theorem~\ref{t.quc} was obtained in a  paper of Kenig and Zhu~\cite{KZ}. Their result does not have the restriction that the ratio of the radii be small, but their error term has a factor of~$R^{-\beta}$, for an exponent~$\beta \in (0,1)$, rather than the sharp exponential factor $\exp(-cr)$ in~\eqref{e.quc}.

\begin{theorem}[Three-ball theorem]
\label{t.quc}
For each $\alpha \in \left( 0,\tfrac12 \right)$, 
there exist~$c(d,\Lambda)>0$ and $\theta(\alpha, d,\Lambda) \in \left(0,\tfrac12\right]$ such that, for every $R,r,s\in [2,\infty)$ with 
\begin{equation}
\frac{r}{s} = \frac{s}{R} \in (0,\theta]
\end{equation}
and $u \in H^1(B_R)$ satisfying 
\begin{equation}
-\nabla \cdot \a\nabla u = 0\quad \mbox{in} \ B_R, 
\end{equation}
we have the estimate
\begin{equation}
\label{e.quc}
\left\| u  \right\|_{\underline{L}^2(B_s)}
\leq
\left\| u \right\|_{\underline{L}^2(B_r)}^{\alpha}
\left\| u \right\|_{\underline{L}^2(B_R)}^{1-\alpha}
+
\exp(-cr) \left\| u \right\|_{\underline{L}^2(B_R)}.
\end{equation}
\end{theorem}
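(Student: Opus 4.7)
The plan is to use the large-scale analyticity estimate to approximate $u$ by a heterogeneous polynomial $\psi \in \A_m^0$ that, on scales much larger than the unit period, behaves essentially like an $\ahom$-harmonic function, and then to invoke the classical Hadamard three-sphere theorem. The exponentially small error term in~\eqref{e.quc} is inherited directly from the exponentially small approximation error provided by Corollary~\ref{c.analyticity.exp}. First, passing between balls and cubes at the cost of harmless universal constants, I set $m := \lfloor \mu R \rfloor$ for a small constant $\mu = \mu(d,\Lambda) > 0$ to be chosen, and apply Corollary~\ref{c.analyticity.exp} to produce $\psi \in \A_m^0$ satisfying
$$\|u - \psi\|_{\underline{L}^2(Q_t)} \leq \exp(-cR)\|u\|_{\underline{L}^2(Q_R)} \qquad \text{for every } t \in [1, cR].$$
Provided $\theta \leq c$, both $r$ and $s$ lie in $[1, cR]$, so the approximation applies at the relevant scales, and the triangle inequality additionally yields $\|\psi\|_{\underline{L}^2(Q_R)} \leq 2\|u\|_{\underline{L}^2(Q_R)}$.

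Next I establish a three-ball inequality for $\psi$ itself. Writing $\psi = \sum_{k=0}^m \nabla^k q : \phi^{(k)}$ with $q \in \P_m$ satisfying $\mathscr{A}q = 0$, the corrector bounds proved in Section~\ref{s.correctors} yield a two-sided comparison $\|\psi\|_{\underline{L}^2(Q_t)} \asymp \|q\|_{\underline{L}^2(Q_t)}$ for $t \geq Cm$, which holds for the choice $m = \lfloor \mu R \rfloor$ once $\mu$ is small enough relative to $C$. The problem thus reduces to a three-ball inequality for the polynomial $q$. Since the leading term of $\mathscr{A}$ is $\ahom : \nabla^2$, an appropriate linear change of variables (diagonalizing the principal part into $\Delta$) recasts $\mathscr{A}q = 0$ as $\Delta \tilde q = f$, where $f$ is a polynomial of strictly lower degree built from the higher-order homogenized tensors $\ahom^{(n)}$, whose norms are controlled by the bounds of Section~\ref{s.correctors}. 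Subtracting a polynomial particular solution $p_0$ of $\Delta p_0 = f$ (which exists because $\Delta : \P_m \to \P_{m-2}$ is surjective) leaves a harmonic polynomial $h = \tilde q - p_0$ to which the classical $L^2$ Hadamard three-sphere theorem applies with constant~$1$; treating $p_0$ as a controllable perturbation then gives
$$\|q\|_{\underline{L}^2(Q_s)} \leq C \|q\|_{\underline{L}^2(Q_r)}^{1/2} \|q\|_{\underline{L}^2(Q_R)}^{1/2}$$
for a constant $C = C(d,\Lambda)$.

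Finally, combining the three-ball inequality for $\psi$ with the triangle-inequality consequences of step one, namely $\|u\|_{\underline{L}^2(Q_s)} \leq \|\psi\|_{\underline{L}^2(Q_s)} + \exp(-cR)\|u\|_{\underline{L}^2(Q_R)}$ and $\|\psi\|_{\underline{L}^2(Q_t)} \leq \|u\|_{\underline{L}^2(Q_t)} + \exp(-cR)\|u\|_{\underline{L}^2(Q_R)}$ for $t \in \{r,R\}$, yields the desired estimate. The multiplicative constant $C$ arising in step two is absorbed by an elementary case split: either $\|u\|_{\underline{L}^2(Q_r)} \leq C^{-1/(1/2-\alpha)}\|u\|_{\underline{L}^2(Q_R)}$, in which case $C\|u\|_{\underline{L}^2(Q_r)}^{1/2}\|u\|_{\underline{L}^2(Q_R)}^{1/2} \leq \|u\|_{\underline{L}^2(Q_r)}^{\alpha}\|u\|_{\underline{L}^2(Q_R)}^{1-\alpha}$ directly; or $\|u\|_{\underline{L}^2(Q_r)}$ is comparable to $\|u\|_{\underline{L}^2(Q_R)}$, in which case all three $\underline{L}^2$ norms are comparable to $\|u\|_{\underline{L}^2(Q_R)}$ and the slack from $\alpha < 1/2$ (together with $\theta$ chosen small enough depending on $\alpha$) closes the estimate. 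Since $R \geq r/\theta^2$, the residual error $\exp(-cR)$ is bounded by $\exp(-c'r)$, giving the form stated in~\eqref{e.quc}. The main obstacle is the intermediate three-ball inequality for elements of $\A_m^0$: both the norm comparison between $\psi$ and its generating polynomial $q$ (which must be uniform in $m$) and the perturbative reduction of $\mathscr{A}q = 0$ to the purely harmonic case depend in essential ways on the sharp corrector and homogenized-tensor estimates of Section~\ref{s.correctors}.
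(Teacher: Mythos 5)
The skeleton of your argument (approximate $u$ by some $\psi\in\A_m^0$, transfer a three-ball inequality from $\psi$ back to $u$, absorb the constant by a case split using $\alpha<\tfrac12$) is reasonable, but its load-bearing step --- a three-ball inequality for elements of $\A_m^0$, together with the two-sided comparison $\|\psi\|_{\underline{L}^2(Q_t)}\asymp\|q\|_{\underline{L}^2(Q_t)}$, \emph{with constants independent of the degree $m$} --- is asserted rather than proved, and it does not follow from the corrector bounds of Section~\ref{s.correctors}. The estimates actually available there control $\psi$ and the perturbations by data at the origin, e.g.~\eqref{e.psi.easybound} and~\eqref{e.twist.it.far}, and to convert them into norm comparisons on a fixed cube you must bound $|\nabla^k q(0)|$ by $\|q\|_{\underline{L}^2(Q_t)}$; for a general polynomial of degree $m$ this costs Markov--Bernstein factors, and the resulting constants (both in the comparison $\|\psi\|\asymp\|q\|$ and in controlling the particular solution $p_0$ of $\Delta p_0=f$ on the three scales) come out as $C^m$, or force you to inflate the cube, which for degree-$m$ polynomials again costs $C^m$. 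This is fatal for your final absorption step, because you have fixed $m=\lfloor\mu R\rfloor$: a factor $e^{Cm}=e^{C\mu R}$ in the intermediate inequality can be absorbed into $\|u\|_{\underline{L}^2(B_r)}^{\alpha}\|u\|_{\underline{L}^2(B_R)}^{1-\alpha}$ only when $\|u\|_{\underline{L}^2(B_r)}\leq e^{-cR}\|u\|_{\underline{L}^2(B_R)}$, while in the complementary case one only knows $\|u\|_{\underline{L}^2(B_r)}\geq e^{-CR}\|u\|_{\underline{L}^2(B_R)}$, so the right side of~\eqref{e.quc} can be exponentially small in $R$ even though $\|u\|_{\underline{L}^2(B_s)}$ may be comparable to $\|u\|_{\underline{L}^2(B_R)}$; no choice of small $\mu$ or $\theta$ rescues this for large $R$. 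So either you must prove the degree-uniform three-ball inequality for $\A_m^0$ (a genuinely nontrivial statement, nowhere in the paper), or you must abandon the fixed choice $m\sim\mu R$.

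The paper's proof avoids this entirely and is worth comparing. It never proves any three-ball inequality for heterogeneous polynomials. Instead, taking $\psi_m\in\A_m$ to match the differences $D^k\hat u(0)$, $k\le m$ (Remark~\ref{r.pick.psi}), it bounds the middle-scale norm of $\psi_m$ directly by the small-scale norm of $u$: combining the growth estimate~\eqref{e.corr.growth} of Lemma~\ref{l.corr.growth} with the iterated difference bounds of Lemma~\ref{l.diffbound} gives $\|\psi_m\|_{\underline{L}^2(Q_s)}\leq (Cs/r)^m\|u\|_{\underline{L}^2(Q_r)}$, which together with Theorem~\ref{t.analyticity} yields the two-term bound~\eqref{e.trapped}. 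The key trick you are missing is that $m$ is then chosen \emph{adaptively}, as the largest integer satisfying both $Cm\leq r$ and the norm-ratio condition~\eqref{e.cond2}; the first term of~\eqref{e.trapped} then gives $\tfrac12\|u\|_{\underline{L}^2(Q_r)}^{1/2}\|u\|_{\underline{L}^2(Q_R)}^{1/2}$, and the case distinction on which of the two constraints is active produces, respectively, the $\exp(-cr)\|u\|_{\underline{L}^2(B_R)}$ error term and the $\alpha$-interpolation term (this is also where $\alpha<\tfrac12$ and the smallness of $\theta$ are used). Your fixed $m\propto R$ cannot reproduce this balance, which is precisely why the $m$-dependence of the constants in your intermediate step becomes decisive.
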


The error term in the estimate~\eqref{e.quc} is sharp in the sense that, without at least an assumption of Lipschitz regularity on the coefficients~$\a(x)$, the multiplicative factor must at least~$\exp( -Cr )$. This is due to same example of~\cite{Fil} mentioned several times above. It remains a very interesting open question whether, under suitable smoothness assumptions on the coefficient~$\a(\cdot)$, the~$c$ inside the exponential in~\eqref{e.quc} may be replaced by any large constant~$A>1$. (If this could be affirmatively resolved, then the restriction~$\lambda \leq \lambda_0$ in Theorem~\ref{t.bottom} below could be also be removed.) 
We remark that, as can be observed from the proof of Theorem~\ref{t.quc} in Section~\ref{s.conseq}, the size of~$c$ is closely related to the constant~$C$ in the restriction $r\geq Cm$ in Theorem~\ref{t.analyticity}. The optimal exponent~$\alpha$ in~\eqref{e.quc} should be $\frac12$, thus the restriction $\alpha\in (0,\tfrac12)$ is slightly suboptimal.

\smallskip

Our last main result, another direct application of Theorem~\ref{t.analyticity}, asserts without regularity assumptions on the coefficient field~$\a(\cdot)$ beyond measurability that the operator~$-\nabla\cdot\a\nabla$ has no~$L^2$ eigenfunctions near the bottom of its spectrum. Note that the exponential decay condition~\eqref{e.superfluous} below is superfluous and can be replaced by the weaker assumption that~$u\in L^2$ by a result of Kuchment~\cite[Theorem 6.15]{Ku}.

\begin{theorem}[Absence of embedded eigenvalues, bottom of the spectrum]
\label{t.bottom}
\emph{}\\
There exist constants $\lambda_0(d,\Lambda)>0$ and~$C(d,\Lambda)<\infty$ such that, for every pair 
\begin{equation*}
(\lambda,u) \in [0,\lambda_0]\times H^1_{\mathrm{loc}}(\Rd)
\end{equation*}
satisfying
\begin{equation}
-\nabla \cdot \a\nabla u = \lambda u \quad \mbox{in} \ \Rd
\end{equation}
and the growth condition
\begin{equation}
\label{e.superfluous}
\limsup_{r \to \infty} \, 
\exp(Cr)
\left\| u \right\|_{\underline{L}^2(Q_r)}  = 0, 
\end{equation}
we have that $u\equiv 0$ in $\Rd$. 
\end{theorem}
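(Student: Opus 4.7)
The plan is to reduce to a homogeneous equation in one higher dimension and then invoke the large-scale analyticity estimate of Corollary~\ref{c.analyticity.exp}. On $\R^{d+1}$ define the block-diagonal coefficient field $\tilde{\a}(x,x_{d+1}):=\operatorname{diag}(\a(x),1)$, which is $\Z^{d+1}$-periodic and uniformly elliptic with the same constant $\Lambda$, and set
\[
\tilde u(x,x_{d+1}) := u(x)\cos\!\bigl(\sqrt{\lambda}\,x_{d+1}\bigr).
\]
Using the eigenvalue equation for $u$ and the identity $-\dr_{d+1}^2\cos(\sqrt{\lambda}\,x_{d+1})=\lambda\cos(\sqrt{\lambda}\,x_{d+1})$, one checks that $\tilde u$ solves the homogeneous lifted equation $-\nabla\cdot\tilde{\a}\nabla\tilde u=0$ on all of $\R^{d+1}$. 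An elementary computation of $\int_{-R/2}^{R/2}\cos^2(\sqrt{\lambda}\,x_{d+1})\,dx_{d+1}$ yields the two-sided bound $\|\tilde u\|_{\underline L^2(Q_R^{d+1})}\asymp\|u\|_{\underline L^2(Q_R^d)}$ with constants uniform in $R\geq 1$ and $\lambda\in[0,\lambda_0]$, provided $\lambda_0$ is bounded by a universal constant.

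Next, I would fix an arbitrary base point $x_0\in\R^d$ and apply Corollary~\ref{c.analyticity.exp} to $\tilde u$ on the cube $Q_R^{d+1}$ centered at $(x_0,0)$ for each large $R$. This produces a heterogeneous polynomial $\tilde\psi_R\in\A^0_{\lfloor R\rfloor}$ for the lifted operator with
\[
\|\tilde u-\tilde\psi_R\|_{\underline L^2(Q_r^{d+1})}\leq\exp(-cR)\,\|\tilde u\|_{\underline L^2(Q_R^{d+1})}\qquad\text{for every }r\in[1,cR],
\]
where $c=c(d,\Lambda)>0$ is the constant of that corollary. I would take the constant $C$ in~\eqref{e.superfluous} to be an appropriate multiple of this $c$, so that the decay hypothesis gives $\|\tilde u\|_{\underline L^2(Q_R^{d+1})}\leq\epsilon(R)\exp(-CR)$ with $\epsilon(R)\to 0$. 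Setting $r=1$ then yields $\|\tilde u-\tilde\psi_R\|_{\underline L^2(Q_1^{d+1})}\to 0$, while setting $r=cR$, using the triangle inequality, and applying the decay of $\tilde u$ at scale $cR$ likewise shows that $\|\tilde\psi_R\|_{\underline L^2(Q_{cR}^{d+1})}$ is exponentially small in $R$.

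The heart of the argument is to transfer this smallness of $\tilde\psi_R$ from the large cube $Q_{cR}^{d+1}$ down to the unit cube. Since $\tilde\psi_R$ is itself a solution of the uniformly elliptic equation $-\nabla\cdot\tilde{\a}\nabla(\cdot)=0$ on $Q_{cR}^{d+1}$, the classical De Giorgi--Nash--Moser $L^\infty$--$L^2$ estimate gives
\[
\|\tilde\psi_R\|_{\underline L^2(Q_1^{d+1})}\leq\|\tilde\psi_R\|_{L^\infty(Q_1^{d+1})}\leq C(d,\Lambda)\,\|\tilde\psi_R\|_{\underline L^2(Q_{cR}^{d+1})},
\]
with a constant \emph{independent of $R$ and of the degree $\lfloor R\rfloor$}. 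This is the decisive point: it avoids any comparison of a heterogeneous polynomial in $\A^0_{\lfloor R\rfloor}$ with its underlying polynomial data, which would otherwise introduce degree-dependent constants and is the part I expect to be the main obstacle in any proof that does not use the lifting.

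Finally, combining by the triangle inequality,
\[
\|\tilde u\|_{\underline L^2(Q_1^{d+1})}\leq\|\tilde u-\tilde\psi_R\|_{\underline L^2(Q_1^{d+1})}+\|\tilde\psi_R\|_{\underline L^2(Q_1^{d+1})}\xrightarrow[R\to\infty]{}0.
\]
The left side does not depend on $R$, so it must vanish, giving $\tilde u\equiv 0$ on the unit cube centered at $(x_0,0)$. Choosing $\lambda_0$ small enough that $\cos(\sqrt{\lambda}\,x_{d+1})$ is bounded away from zero on $|x_{d+1}|<1/2$ (e.g.\ $\lambda_0<\pi^2/4$), this forces $u=0$ on the unit cube around $x_0$, and since $x_0\in\R^d$ is arbitrary, $u\equiv 0$ in $\R^d$. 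The remaining verification, beyond the lifting in Step~1, is to check that the positive constants $c,C,\ldots$ can be arranged consistently so that all exponential factors conspire to decay; this is essentially routine bookkeeping once the lifting is in place.
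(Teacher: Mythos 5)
The lifting in your first step is wrong, and this is a genuine error, not bookkeeping. Write $\tilde u(x,x_{d+1})=u(x)\,g(x_{d+1})$ with $\tilde\a=\mathrm{diag}(\a,1)$. Then
\begin{equation*}
-\nabla\cdot\tilde\a\nabla\tilde u
= g(x_{d+1})\left(-\nabla\cdot\a\nabla u\right)(x)-u(x)\,g''(x_{d+1})
= u(x)\left(\lambda g(x_{d+1})-g''(x_{d+1})\right),
\end{equation*}
so the lifted equation is homogeneous only if $g''=\lambda g$. For $\lambda>0$ this forces $g=\exp(\pm\sqrt{\lambda}\,x_{d+1})$ or $\cosh(\sqrt{\lambda}\,x_{d+1})$, not $\cos(\sqrt{\lambda}\,x_{d+1})$: with your choice $g''=-\lambda g$, the two contributions add instead of cancel and $-\nabla\cdot\tilde\a\nabla\tilde u=2\lambda\tilde u\neq0$ (cosine would be right only for a negative eigenvalue). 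Consequently the uniform two-sided bound $\|\tilde u\|_{\underline{L}^2(Q_R^{d+1})}\asymp\|u\|_{\underline{L}^2(Q_R^{d})}$, on which your bookkeeping rests, is also unavailable: any correct lift necessarily grows like $\exp(\sqrt{\lambda}\,|x_{d+1}|)$ in the new variable.

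The error is repairable, and with the repair your architecture does work and is genuinely different from the paper's. Taking $v=u(x)\cosh(\sqrt{\lambda}\,x_{d+1})$ (the paper uses $\exp(\lambda^{1/2}x_{d+1})$), one only has $\|v\|_{\underline{L}^2(Q_R^{d+1})}\leq\cosh(\tfrac12\sqrt{\lambda}\,R)\,\|u\|_{\underline{L}^2(Q_R^{d})}\leq e^{\sqrt{\lambda_0}R/2}\|u\|_{\underline{L}^2(Q_R)}$, but this loss is absorbed by choosing the constant $C$ in~\eqref{e.superfluous} larger than $\tfrac12\sqrt{\lambda_0}$, so $v$ still decays exponentially on centered cubes; and recovering $u\equiv0$ from $v\equiv0$ is even easier since $\cosh\geq1$. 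The rest of your plan is sound: $\tilde\psi_R\in\A^0_{\lfloor R\rfloor}$ is a genuine global solution of the lifted equation, so the De Giorgi--Nash interior estimate transfers smallness from $Q_{cR}$ down to $Q_1$ with a constant independent of the degree (for the scalar equation; this step would not survive for systems). This is a different and in some respects simpler route than the paper's proof, which works in the off-center ball $B_R(2Re_1)$ — where smallness of $v$ is only known on a ball of radius comparable to $m$ — and must then propagate smallness of the heterogeneous polynomial \emph{outward} to radius $3R$ via Lemmas~\ref{l.corr.growth} and~\ref{l.diffbound}, paying degree-dependent factors beaten by taking the decay rate large; your centered-cube version avoids that because the decay hypothesis already makes $v$, hence $\tilde\psi_R$, small on the large cube $Q_{cR}$, and only an inward (degree-free) estimate is needed. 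Do also add a sentence justifying the translation to an arbitrary base point $x_0$: $\a(\cdot+x_0)$ is still $\Z^d$-periodic with the same ellipticity, and the growth condition transfers to cubes centered at $x_0$ since $Q_r(x_0)\subset Q_{r+2|x_0|}$.
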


While we are unaware of Theorem~\ref{t.bottom} appearing previously in the literature, it seems to have attained a kind of folklore status. Indeed, Jonathan Goodman demonstrated to us in a private communication that Theorem~\ref{t.bottom} can also be proved using a Bloch wave perturbation argument. It seems that, like the proof of Theorem~\ref{t.bottom}, this argument is also restricted to the bottom of the spectrum. 

\smallskip

None of the results stated in this introduction can be improved without at least an assumption of Lipschitz continuity of the coefficients (with the possible exception of improving $\alpha$ to $\alpha=\frac12$ in the statement of Theorem~\ref{t.quc}). This however begs the question of whether a smoothness assumption on~$\a(\cdot)$ could be used to augment the proof of Theorem~\ref{t.analyticity} by relaxing the restriction~$r\geq Cm$. Unfortunately, we do not expect this to be the case. 

\smallskip

In the next section, we introduce the heterogeneous polynomials, the higher-order correctors and homogenized equation and prove some auxiliary estimates. The proof of Theorem~\ref{t.analyticity} is the focus of Section~\ref{s.analyticity}. In Section~\ref{s.conseq} we demonstrate that the other four results stated above are corollaries of Theorem~\ref{t.analyticity}.

\section{Higher-order correctors and homogenized tensors}
\label{s.correctors}

\subsection{Preliminaries}
\label{ss.prelim}
The standard basis for~$\Rd$ is denoted~$\{ e_1,\ldots,e_d\}$. We denote the set of multi-indices by~$\N^d$. For each $\alpha\in\N^d$, we define the \emph{order of $\alpha$} by~$\left| \alpha \right| = \sum_{i=1}^d \alpha_i$. The factorial of a multiindex~$\alpha$ is~$\alpha!:= \prod_{i=1}^d \alpha_i!$. For each~$\alpha\in\N^d$, we define the monomial $x^\alpha$ and partial derivative operator~$\partial^\alpha$ by 
\begin{equation}
\label{e.pars}
x^\alpha:= \prod_{i=1}^d x_i^{\alpha_i} 
\quad \mbox{and} 
\quad 
\partial^\alpha = \prod_{i=1}^d \partial_{x_i}^{\alpha_i}.
\end{equation}
We denote, for $|\alpha|=m\in\N$, the multinomial coefficient
\begin{equation}
\binom{m}{\alpha} := \frac{m!}{\alpha!}. 
\end{equation}
By a \emph{symmetric tensor of order $m\in\N$}, we mean a mapping 
\begin{equation}
T := \{ \alpha \in\N^d\,:\, |\alpha|=m \} \to \R.
\end{equation}
We may write $T = 
\{T_\alpha\}_{|\alpha|=m}$. 
We let $\mathbb{T}_m$ denote the set of symmetric tensors of order~$m$. 
The gradient~$\nabla u = (\partial_{x_1}u,\ldots,\partial_{x_d}u)$ of a function~$u$ is a tensor of order~one which we may regard as an element of~$\Rd$. If $u\in C^m$, then we may regard $\nabla^m u = ( \partial^\alpha u)_{|\alpha|=m}$ as a symmetric tensor of order~$m$. For each $m\in\N$ and $x\in\Rd$, we also let $x^{\otimes m} \in \mathbb{T}_m$ be the tensor with coordinates 
\begin{equation}
(x^{\otimes m})_\alpha:= x^\alpha, \quad |\alpha| = m. 
\end{equation}
We denote the contraction~$S:T$ of~$S,T\in\mathbb{T}_m$ by
\begin{equation}
\label{e.colon}
(S:T) := 
\sum_{|\alpha|=m} 
\binom{m}{\alpha}
S_\alpha T_\alpha. 
\end{equation}
The multinomial coefficient appears in order to properly count the multiplicities of the multiindices. It makes writing Taylor expansions convenient, as a polynomial~$p\in\P_m$ of degree~$m\in\N$ can be expressed as 
\begin{equation}
p(x) = \sum_{k=0}^m
\frac{1}{k!} \left( \nabla^kp(0) : x^{\otimes k} \right). 
\end{equation}
For each direction $i\in \{1,\ldots,d\}$, we define the forward finite difference operator 
\begin{equation}
D_i u(x) := u(x+e_i) - u(x). 
\end{equation}
In analogy with~\eqref{e.pars}, for each $n\in\N$, we let $D^nu(x) \in \mathbb{T}_n$ be defined by 
\begin{equation}
(D^\alpha u)(x) = \left( D_{1}^{\alpha_1} D_{2}^{\alpha_2} \cdots D_{d}^{\alpha_d} u \right)(x).
\end{equation}
Note that the forward finite difference operators commute and hence $D^\alpha u$ is indeed a symmetric tensor. 

\smallskip

We next discuss the norms we put on the linear space~$\mathbb{T}_m$ of $m$th order tensors. Even though~$\mathbb{T}_m$ is finite dimensional, since we will work with~$m$ very large and need to keep track of dependence in~$m$, this requires some care. We let $| \cdot |$ denote the Euclidean norm
\begin{equation}
\left| T \right| 
:=
\left( T:T \right)^{\frac12}
=
\left( \sum_{|\alpha|=m} 
\binom{m}{\alpha} 
\left| T_\alpha\right|^2 \right)^{\frac12}, \quad T \in \mathbb{T}_m.
\end{equation}
Observe in particular that, for every $x\in\Rd$ and $m\in\N$, 
\begin{equation}
\label{e.xotimesm.size}
\left| x^{\otimes m} \right| = |x|^m. 
\end{equation}

\subsection{Higher-order correctors}
\label{ss.higherorder}

In order to work with the space~$\mathbb{A}_n$, we need to introduce the higher-order periodic correctors which are classical in the theory of periodic homogenization (see for instance~\cite{ABV,BG}) and represent the ``periodic wiggles'' in the heterogeneous polynomials. Our construction of the correctors is minimalistic because here we are interested only in building the space~$\A_m$ (and not, for instance, in quantifying homogenization errors). In particular, we have no interest in the flux correctors or in the nonsymmetric part of the homogenized tensors.

\smallskip

The higher-order correctors and homogenized tensors can be defined in various different ways, and different choices amount to different parametrizations of the space~$\mathbb{A}_n$ (which is what is actually intrinsic). 
Here we take the correctors to be a family~$\{\phi^{(k)}\}_{k\in\N}$ of $\Z^d$-periodic, mean-zero functions with~$\phi^{(k)}$ valued in the set~$\mathbb{T}_k$ of~$k$th order tensors.  They are constructed simultaneously with a sequence~$\{\ahom^{(k)}\}_{k\in\N}$ of constant tensors, with $\ahom^{(k)}\in \mathbb{T}_k$, such that, for every~$n\in\N$ and~$p\in\P_n$, 
\begin{equation}
\label{e.correq_n}
\nabla \cdot \left( \a \nabla \left( \sum_{k=0}^{n} \phi^{(k)} \, \colon \nabla^k p \right) \right) =
\sum_{k=1}^{n} \ahom^{(k)} \, \colon \nabla^{k} p .
\end{equation}
Here we use the abbreviation $\phi^{(0)}=1$ and $\ahom^{(1)} = 0$. In the case~$n=1$, this is the familiar first-order corrector equation which states that, for every affine~$p\in\P_1$,
\begin{equation} \label{e.correq_1}
\nabla \cdot \left( \a \left( \nabla p + \nabla \left( \phi^{(1)} \, \colon \nabla p \right) \right) \right) = 0.
\end{equation}
The tensor $\ahom^{(2)}$, defined by
\begin{equation}
\label{e.ahom7}
\ahom^{(2)} = \ahom := \left\langle \a \left(I_d + \nabla \phi^{(1)} \right) \right\rangle,
\end{equation}
is the homogenized matrix, which satisfies the ellipticity condition
\begin{equation}
\label{e.ahom.ue}
\left| \xi \right|^2 \leq \xi \cdot \ahom \xi
\quad \mbox{and} \quad
\left| \eta \cdot \ahom \xi \right| 
\leq \Lambda \left| \eta \right| \left| \xi \right|,
\quad
\forall \xi,\eta \in\Rd. 
\end{equation}
We remark that, since we have made no assumption of symmetry of the coefficient field~$\a(\cdot)$, the homogenized tensor defined in~\eqref{e.ahom7}, as well as those appearing below in the proof of Lemma~\ref{l.correctorsexist}, are not in general symmetric. However, since in this paper we only use the tensors to define~$\mathscr{A}$ (i.e, $\ahom^{(k)}$ appears only in expressions of the form $\ahom^{(k)}:\nabla^k u$), there is no need to distinguish between these tensors and their symmetric part. We will therefore abuse notation by assuming that all tensors are symmetric (in other words, one should take the symmetric part of any formula defining a tensor).

\smallskip

The next lemma gives us the existence of correctors of arbitrary degree. 

\begin{lemma} 
\label{l.correctorsexist}
For each $k \in \N$, there exist correctors $\phi^{(k)} \in H^1(\R^d / \Z^d; \mathbb{T}_k )$ with zero mean in $Q_1$ and tensors $\ahom^{(k)} \in \mathbb{T}_k$ such that, for every~$n\in \N$ and~$p \in \mathbb{P}_n$, 
\begin{equation} 
\label{e.corr.eq.n}
-\nabla \cdot \left( \a \nabla \left( \sum_{k=0}^{n} \phi^{(k)} \, \colon \nabla^k p \right) \right) 
= - \sum_{k=2}^{n} \ahom^{(k)} \, \colon \nabla^{k} p 
\quad \mbox{in} \ \Rd. 
\end{equation}
Moreover, there exists a constant $C(d,\Lambda)<\infty$ such that, for every $k \in \N$,
\begin{equation}
\label{e.corrector.bounds}
\left\|  \nabla \phi^{(k)} \right\|_{L^2(Q_1)} 
+ \left| \ahom^{(k)} \right| 
\leq C^k .
\end{equation}
\end{lemma}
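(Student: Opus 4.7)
My plan is to construct the correctors and homogenized tensors together by induction on $k$, establishing~\eqref{e.corr.eq.n} for all polynomials in $\P_k$ at step $k$. The base cases $k=0$ and $k=1$ are standard: take $\phi^{(0)}=1$ and $\ahom^{(0)}=\ahom^{(1)}=0$, obtain $\phi^{(1)}$ from the first-order cell problem~\eqref{e.correq_1} via Lax--Milgram on mean-zero $\Zd$-periodic $H^1$ functions, and define $\ahom^{(2)}$ by~\eqref{e.ahom7}; direct substitution verifies~\eqref{e.corr.eq.n} on $\P_1$.

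For the inductive step, suppose $\phi^{(j)}$ and $\ahom^{(j)}$ have been constructed for $j\le k-1$ so that~\eqref{e.corr.eq.n} holds on $\P_{k-1}$, and let $p$ be homogeneous of degree $k$. Applying $-\nabla\cdot\a\nabla$ to the ``partial ansatz'' $u_p := \sum_{j=0}^{k-1}\phi^{(j)}:\nabla^j p$, expanding the divergence, and using the inductive identity for the lower-degree polynomials $\nabla^l p\in \P_{k-l}$ (for $l\ge 1$) to cancel contributions that depend polynomially on~$x$, one checks that the quantity $-\nabla\cdot\a\nabla u_p + \sum_{j=2}^{k-1}\ahom^{(j)}:\nabla^j p$ can be written as $F^{(k)}(x):\nabla^k p$ for some $\Zd$-periodic tensor field $F^{(k)}\in L^2(\Rd/\Zd;\mathbb{T}_k)$ built as an explicit finite combination of products of $\a$, $\phi^{(j)}$, and $\nabla\phi^{(j)}$ for $j\le k-1$. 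Define $\ahom^{(k)}\in \mathbb{T}_k$ so that $F^{(k)}-\ahom^{(k)}$ has zero mean over $Q_1$, and solve the tensor-valued periodic cell problem
$$
-\nabla \cdot \bigl( \a\nabla \phi^{(k)} \bigr) \;=\; F^{(k)} - \ahom^{(k)} \quad \text{in } \Rd,
$$
componentwise in $\mathbb{T}_k$ via Lax--Milgram, selecting the mean-zero representative. A short substitution then yields~\eqref{e.corr.eq.n} for all $p\in\P_k$ by linearity.

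The quantitative estimate $\|\nabla\phi^{(k)}\|_{L^2(Q_1)}+|\ahom^{(k)}|\le C^k$ follows from the usual energy identity $\|\nabla\phi^{(k)}\|_{L^2(Q_1)}\ls \|F^{(k)}\|_{H^{-1}(Q_1)}$, the bound $|\ahom^{(k)}|\le \|F^{(k)}\|_{L^2(Q_1)}$, and an inductive estimate on $\|F^{(k)}\|$. The main technical obstacle is to carry out this bookkeeping so that the growth in $k$ is geometric and not factorial: since $F^{(k)}$ is an assembly of $O(k^{d-1})$ tensorial terms of the form $\a\phi^{(j)}$ or $\a\nabla\phi^{(j)}$ (possibly in divergence form), and since the multinomial weights in the tensor norm from Section~\ref{ss.prelim} match exactly the combinatorial factors produced by differentiating polynomials and performing the tensor contractions in~\eqref{e.colon}, no factorial blow-up arises. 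Combined with Poincar\'e (to bound $\|\phi^{(j)}\|_{L^2}$ by $\|\nabla\phi^{(j)}\|_{L^2}$) and the inductive estimate $\le C^j$ for $j\le k-1$, this yields $\|F^{(k)}\|_{L^2}+\|F^{(k)}\|_{H^{-1}} \ls k^{d-1}\Lambda\, C^{k-1}$, and absorbing the polynomial factor in $k$ into a slightly enlarged constant closes the induction at rate $C^k$.
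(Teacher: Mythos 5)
Your construction follows the same overall route as the paper's: induction on the degree, definition of $\ahom^{(k)}$ as the mean of the residual so that the periodic cell problem has a mean-zero, $H^{-1}(\R^d/\Z^d)$ right-hand side, solution by Lax--Milgram with the mean-zero representative, and verification of \eqref{e.corr.eq.n} by linearity. The one structural difference is how you see that the residual is a $\Z^d$-periodic field contracted against the constant tensor $\nabla^k p$ only: you expand the divergence explicitly and cancel the polynomially growing terms using the inductive identity (the classical route), whereas the paper observes that $L_{k-1}[p]-L_{k-1}[p(\cdot-z)]=L_{k-1}[p-p(\cdot-z)]=0$ because $p-p(\cdot-z)\in\P_{k-1}$, so the periodicity of $\a$ immediately gives periodicity of $L_{k-1}[p_k]$ with no combinatorial bookkeeping. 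Both work; if you do the expansion, note that $F^{(k)}$ in fact involves only $\phi^{(k-1)}$, $\nabla\phi^{(k-1)}$ and $\phi^{(k-2)}$, through a fixed, $k$-independent collection of contractions with $\a$ and symmetrized tensorizations with coordinate directions --- not a list of $O(k^{d-1})$ terms running over all $j\le k-1$.

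The step you should repair is the bound \eqref{e.corrector.bounds}, which is the only quantitative content of the lemma. A per-step estimate of the form $\|F^{(k)}\|\lesssim k^{d-1}\Lambda\,\mathsf{C}^{k-1}$ cannot be fixed by ``absorbing the polynomial factor into a slightly enlarged constant'': in a multiplicative induction the hypothesis at step $k$ carries the same constant $\mathsf{C}$ you are trying to propagate, so you would need $C_1k^{d-1}\Lambda\le \mathsf{C}$ for every $k$, which fails; taken literally, your recursion only yields a factorial-type bound $C^k(k!)^{d-1}$, which would ruin the applications (for instance \eqref{e.psi.easybound} and the minimal scale $r\ge Cm$ in Theorem~\ref{t.analyticity}). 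The correct point --- which you gesture at with the remark on the multinomial weights --- is that no $k$-dependent factor appears at all: since $F^{(k)}$ depends only on the two previous correctors, and since the operations involved (pointwise multiplication by $\a$, symmetrized tensorization $T\mapsto\mathrm{sym}(T\otimes e_i)$, which corresponds to multiplying the associated homogeneous polynomial by a linear form and does not increase the weighted norm of Section~\ref{ss.prelim} by more than a dimensional constant) are bounded uniformly in $k$, one gets, with $a_j:=\|\nabla\phi^{(j)}\|_{L^2(Q_1)}+|\ahom^{(j)}|$ and Poincar\'e's inequality for the mean-zero correctors, a recursion of the form $a_k\le C(d,\Lambda)\,(a_{k-1}+a_{k-2})$, which closes at rate $C^k$. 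With that correction your argument is complete and coincides in substance with the paper's, which treats this estimate tersely (``straightforward to obtain by induction'').
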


\begin{proof}
We proceed inductively. Let us assume that we have $\phi^{(k)} \in H^1(\R^d / \Z^d; \mathbb{T}_k)$ and $\ahom^{(k)} \in  \mathbb{T}_k$ for $k \in \{1,\ldots,m-1\}$, such that~\eqref{e.corr.eq.n} is valid for $n \in \{1,\ldots,m-1\}$. We will construct $\phi^{(m)} \in H^1(\R^d / \Z^d;  \mathbb{T}_m )$ and $\ahom^{(m)} \in  \mathbb{T}_m$ such that~\eqref{e.corr.eq.n} is valid for $n=m$. The base case~$m=1$ for the induction is valid since $\ahom^{(1)} = 0$ and $\phi^{(0)}=1$ by the equation of $\phi^{(1)}$ in~\eqref{e.correq_1}. Let us denote, for $n\in\{1,\ldots, m-1\}$, 
\begin{equation}
L_n[p] := 
\nabla \cdot \left( \a \nabla \left( \sum_{k=0}^{n} \phi^{(k)} \, \colon \nabla^k p \right) \right) 
- \sum_{k=2}^{n} \ahom^{(k)} \, \colon \nabla^{k} p.
\end{equation}
Let $p \in \mathbb{P}_m$. Note that, for every $z\in\Rd$, we have that~$\tilde p := p - p(\cdot-z) \in \mathbb{P}_{m-1}$. We set
$p_m := \frac1{m!} \nabla^m p(0) : x^{\otimes m}$ and observe, by the induction assumption and the linearity of $L_{n}$, that $L_{m-1}[p] = L_{m-1}[p_m]$ and 
\begin{align} \notag 
	L_{m-1}[p] - L_{m-1}[p(\cdot-z)] = L_{m-1}[p - p(\cdot-z)] = 0.
\end{align}
Taking $z\in \Z^d$ above and using the periodicity of~$\a(\cdot)$, we deduce that $L_{m-1}[p_m]$ is a periodic distribution. We then define the tensor $\ahom^{(m)}\in \mathbb{T}_m$ by  
\begin{equation*} 
\left( \ahom^{(m)} \, \colon \nabla^m p \right)  := \left\langle L_{m-1} [p_m] \right\rangle,
\quad 
p\in\P_m. 
\end{equation*}
It follows that $L_{m-1}[p] - \ahom^{(m)} \, \colon \nabla^m p$ is periodic, belongs to $H^{-1}(\R^d / \Z^d )$, and has zero mean. Therefore, by Lax-Milgram lemma, the equation 
\begin{equation*} 
	\nabla \cdot \left( \a \nabla u_p \right) =   \ahom^{(m)} \, \colon \nabla^m p_m - L_{m-1}[p_m] 
\end{equation*}
has a solution $u_p\in H^1(\R^d / \Z^d; \mathbb{T}_m )$ with zero mean and satisfying the bound
\begin{equation*} 
	\left\| \nabla u_p  \right\|_{L^2(Q_1)} \leq C \left\| \ahom^{(m)} \, \colon \nabla^m p_m - L_{m-1}[p_m]    \right\|_{H^{-1}(\R^d / \Z^d )}.
\end{equation*}
By the linearity of the map $p\mapsto u_p$ we may write this in tensor form, that is, $u_p = \phi^{(m)} \colon \nabla^m p $ for all $p \in \mathbb{P}_m$. This proves the induction step and completes the proof of the first statement. 
 
\smallskip 
 
The bound~\eqref{e.corrector.bounds} is straightforward to obtain by induction. 
\end{proof}

\subsection{Heterogeneous polynomials: the space $\mathbb{A}_m$}

We define~$\mathscr{A}$ to be the ``full'' higher-order macroscopic (or homogenized) operator
\begin{equation}
\label{e.def.scrA}
\mathscr{A} u := 
\sum_{n=2}^\infty 
\left( \ahom^{(n)} : \nabla^n u \right).
\end{equation}
This operator is a more precise large-scale approximation of the heterogenous operator~$\nabla \cdot \a \nabla$ than the usual homogenized operator~$\nabla \cdot \ahom\nabla$, which is nothing other than the first summand on the right of~\eqref{e.def.scrA}. Unfortunately,~$\mathscr{A}$ is not elliptic.
However, as we will see, in many situations it is dominated by its \emph{lowest}-order coefficients (at least for sufficiently regular data) and thus, in a certain sense,~$\nabla \cdot \ahom\nabla$ is the leading order approximation of~$\mathscr{A}$. Therefore~$\mathscr{A}$ is relatively well-behaved if its domain is restricted to very regular functions. 

\smallskip

For each $m\in\N$, define
\begin{equation}
\mathbb{A}_m:=
\left\{ 
\psi = \sum_{k=0}^m \left( \nabla^k q : \phi^{(k)} \right)
\,:\, 
q\in \P_m
\right\}. 
\end{equation}
By Lemma~\ref{l.correctorsexist}, $\nabla \cdot \a\nabla \psi \in \P_{m-2}$ for every $\psi\in \A_{m}$. Indeed, for every $q\in \P_m$, 
\begin{equation}
\label{e.poly.consistency}
-\nabla \cdot \a\nabla \psi = -\mathscr{A}q 
\quad \mbox{where} \quad 
\psi = \sum_{k=0}^m \nabla^k q: \phi^{(k)}.
\end{equation}
In this sense, the approximation of~$\nabla \cdot \a\nabla$ by~$\mathscr{A}$ is exact, for elements of~$\psi$. The result of Avellaneda and Lin~\cite{AL2} tells us that the set~$\mathbb{A}_n$ precisely characterizes the set of solutions~$u$ of the equation
\begin{equation}
\label{e.pde.encour}
-\nabla \cdot \a\nabla u = p 
\end{equation}
where~$p\in \A_{m-2}$ and $|u(x)|$ grows at most like~$o(|x|^{m+1})$ at infinity. In other words, for every $m\in\N$, 
\begin{align}
\label{e.Liouville.poly}
\left\{ 
u\in H^1_{\mathrm{loc}} (\Rd)
\, :\,  
 -\nabla \cdot \a\nabla u \in\P_{m-2}, \ 
\limsup_{r\to \infty} r^{-(m+1)} \left\| u \right\|_{\underline{L}^2(B_r)} = 0
\right\}
= 
\mathbb{A}_m. 
\end{align}
In particular, the set of~$\a(x)$--harmonic functions (solutions of~\eqref{e.pde.encour} with $p=0$) with at-most polynomial growth is characterized by
\begin{align}
\label{e.Liouville.poly.0}
&
\left\{ 
u\in H^1_{\mathrm{loc}} (\Rd)
\, :\,  
  -\nabla \cdot \a\nabla u = 0, \ 
\limsup_{r\to \infty} r^{-(m+1)} \left\| u \right\|_{\underline{L}^2(B_r)} = 0
\right\}
\\ & \notag \qquad\qquad\qquad\qquad 
= 
\left\{ 
u = \sum_{k=0}^m \left( \nabla^k q : \phi^{(k)} \right)
\,:\, 
q\in \P_m, \ \mathscr{A}q = 0
\right\}  = : \mathbb{A}^0_m.
\end{align}
This Liouville-type theorem for solutions with polynomial growth is a qualitative version of the $C^{m,1}$ estimate~\eqref{e.Cm1}. It is an immediate consequence of the weaker version of~\eqref{e.Cm1} in which the constant~$C$ on the right side is allowed to depend on~$m$. Due to~\eqref{e.Liouville.poly}, we informally refer to elements of~$\A_m$ as ``heterogeneous polynomials.'' We will give another proof of~\eqref{e.Liouville.poly} in this paper, as the argument for Theorem~\ref{t.analyticity} does not depend on it. 

\smallskip

We remark that coefficients in the operator~$\mathscr{A}$ are not universal. Indeed, if one normalized the correctors differently or defined them with respect to the cube $Q_2$ instead of $Q_1$, then all of the homogenized coefficients would in general be different, with the exception of~$\ahom^{(2)}$. The universal objects are the tensor~$\ahom^{(2)}$ (the homogenized matrix) and the spaces~$\mathbb{A}_n$. 

\smallskip

We note that there exists $C(d,\Lambda)<\infty$ such that, for every $m\in\N$, $\psi\in \A_m$ and $r\geq 1$, 
\begin{equation}
\label{e.psi.easybound}
\left\| \psi \right\|_{\underline{L}^2(Q_r)}
\leq
\sum_{k=0}^m
\left( \frac{Cr}{k+1} \right)^k \left| \nabla^kq(0) \right|,
\quad \mbox{where} \quad 
\psi = \sum_{k=0}^m \nabla^k q:\phi^{(k)},\quad q\in\P_m.  
\end{equation}
This is an immediate consequence of~\eqref{e.corrector.bounds}.

\smallskip

In the rest of this section, we establish some properties of the space~$\mathbb{A}_m$ and macroscopic operator~$\mathscr{A}$. We first give a basic result concerning the Laplacian operator restricted to polynomials.

\begin{lemma}
\label{l.polyharmonic}
There exists $C(d,\Lambda)<\infty$ such that, for every~$m\in\N$ and~$p\in \P_m^*$, there exists $q\in \P_{m+2}^*$ satisfying 
\begin{equation}
-\nabla \cdot \ahom\nabla q = p \quad \mbox{in} \ \Rd
\end{equation}
and
\begin{equation}
\label{e.polyharmonic}
\left| \nabla^{m+2}q \right| 
\leq 
C^m \left| \nabla^m p \right|. 
\end{equation}
\end{lemma}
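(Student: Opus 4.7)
The plan is to reduce to the constant-coefficient Laplacian by a linear change of variables and then solve the equation explicitly via the Almansi decomposition of polynomials, using the Bombieri--Fischer inner product to track tensor norms.

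Since $\nabla \cdot \ahom \nabla$ depends only on the symmetric part of $\ahom$, we may assume $\ahom$ is symmetric; then \eqref{e.ahom.ue} gives $I_d \leq \ahom \leq \Lambda I_d$. Setting $S := \ahom^{1/2}$ and $\tilde p(x) := p(Sx)$, it suffices to find $\tilde q \in \P_{m+2}^*$ with $-\Delta \tilde q = \tilde p$ and a matching tensor bound, because $q(y) := \tilde q(S^{-1}y)$ then satisfies $-\nabla \cdot \ahom \nabla q = p$. The tensor $\nabla^k r$ transforms under $x \mapsto Sx$ by applying $k$ copies of $S$, so this change of coordinates multiplies tensor norms by at most $\|S\|^{m+2}\|S^{-1}\|^m \leq \Lambda^{(m+2)/2}$, which is absorbed in $C^m$.

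For the Laplacian problem, I would invoke the Almansi decomposition: every $p \in \P_m^*$ admits a unique expression $p = \sum_{j=0}^{\lfloor m/2 \rfloor} |x|^{2j} h_j$, where $h_j$ is a harmonic homogeneous polynomial of degree $m-2j$. The identity $\Delta(|x|^{2l} h) = 2l(2l + 2\deg h + d - 2)|x|^{2l-2}h$ for harmonic $h$ (verified via Euler's relation $x \cdot \nabla h = (\deg h)h$) then shows that the ansatz
\[
\tilde q := \sum_{j=0}^{\lfloor m/2 \rfloor} c_j |x|^{2(j+1)} h_j, \qquad c_j := -\frac{1}{2(j+1)(2m-2j+d)},
\]
solves $-\Delta \tilde q = \tilde p$ term by term.

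To bound $|\nabla^{m+2} \tilde q|$ in terms of $|\nabla^m \tilde p|$, the key tool is the Bombieri--Fischer inner product on $\P_k^*$ determined by $\langle x^\alpha, x^\beta\rangle_F := \delta_{\alpha\beta}\alpha!$, which has two classical properties: (i) $\Delta$ and multiplication by $|x|^2$ are mutually adjoint under $\langle \cdot,\cdot\rangle_F$, which forces the Almansi decomposition to be orthogonal; and (ii) $|\nabla^k r|^2 = k!\,\|r\|_F^2$ for every $r \in \P_k^*$. Iterating (i) gives $\||x|^{2(j+1)} h_j\|_F^2 = 2(j+1)(2m-2j+d)\||x|^{2j} h_j\|_F^2$, so by orthogonality
\[
\|\tilde q\|_F^2 = \sum_{j} \frac{\||x|^{2j} h_j\|_F^2}{2(j+1)(2m-2j+d)} \leq \frac{C(d)}{m+1} \|\tilde p\|_F^2.
\]
Converting via (ii) yields $|\nabla^{m+2} \tilde q| \leq C(d)\sqrt{m+1}\,|\nabla^m \tilde p|$, and combining with the change-of-variables factor gives the claimed bound $|\nabla^{m+2} q| \leq C^m |\nabla^m p|$. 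The main obstacle is verifying the Bombieri--Fischer adjointness $\langle |x|^2 p, q\rangle_F = \langle p, \Delta q\rangle_F$ and deriving the ratio $\||x|^{2(j+1)} h\|_F^2/\||x|^{2j} h\|_F^2$ from iterating the Laplacian formula, after which the constants fall out cleanly; note that the approach actually delivers a polynomial bound $\sqrt{m+1}$ for the Laplacian case, with the exponential factor $C^m$ arising only from the change of variables used to normalize $\ahom$ to the identity.
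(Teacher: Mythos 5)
Your proposal is correct, and it differs from the paper's argument in the part that actually requires work. Both proofs begin the same way, reducing to the constant-coefficient Laplacian by an affine change of variables (your verification that the anisotropy only costs a factor $\leq \Lambda^{(m+2)/2}\leq C^m$ is the right bookkeeping, though the exponents should be $\|S^{-1}\|^{m+2}\|S\|^{m}$ rather than $\|S\|^{m+2}\|S^{-1}\|^m$ --- harmless, since $\|S^{-1}\|\leq 1$ and $\|S\|\leq\Lambda^{1/2}$). The paper then uses the explicit ansatz $q=-\sum_j a_{j,m}|x|^{2(j+1)}(-\Delta)^jp$ and bounds $\nabla^{m+2}q$ by brute-force combinatorial estimates on the resulting tensors, which is what produces the factor $C^m$ even in the constant-coefficient case. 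You instead write $p$ in its Almansi--Fischer decomposition $\sum_j|x|^{2j}h_j$, invert $\Delta$ term by term, and control the answer through the Bombieri--Fischer inner product: the adjointness of $\Delta$ and multiplication by $|x|^2$ gives both the orthogonality of the decomposition and the ratio $\||x|^{2(j+1)}h_j\|_F^2=2(j+1)(2m-2j+d)\||x|^{2j}h_j\|_F^2$, and the identity $|\nabla^k r|^2=k!\,\|r\|_F^2$ (which does hold for the paper's multinomial-weighted tensor norm) converts this into $|\nabla^{m+2}\tilde q|\leq C(d)\sqrt{m+2}\,|\nabla^m\tilde p|$ (a single application of adjointness suffices, and the correct factor from $(m+2)!/\bigl((m+1)\,m!\bigr)$ is $m+2$, not $m+1$ --- again immaterial). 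Amusingly, your $\tilde q$ and the paper's $q$ are the same polynomial, since both lie in $|x|^2\P_m^*$, where $\Delta$ is invertible; the difference is purely in how the tensor norm is estimated. What your route buys is a genuinely sharper intermediate statement --- a polynomial-in-$m$ bound for the Laplacian, with the exponential factor isolated as the price of normalizing $\ahom$ --- at the cost of importing the Fischer-inner-product machinery, whereas the paper's computation is self-contained and the cruder $C^m$ is all that the subsequent lemmas need.
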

\begin{proof}
By performing an affine change of coordinates, we may suppose that~$\ahom=\Id$, in other words,~$\nabla \cdot \ahom\nabla = \Delta$. We claim that the polynomial $q\in \P_{m+2}^*$ defined by 
\begin{align} \notag 
q(x) := - \sum_{j=0}^\infty a_{j,m} |x|^{2(j+1)} (-\Delta)^{j} p(x)
\end{align}
satisfies the equation
\begin{equation} 
\label{e.Deltapisq}
- \Delta q = p \quad \mbox{in} \ \Rd, 
\end{equation}
where the coefficients $\{ a_{j,m}\}$ are defined, recursively by
\begin{equation*} 
a_{-1,m} = 1, \quad a_{j,m} = \frac{1}{2(j+1)( 2(m-j) + d)} a_{j-1,m} , \quad j \in \N.
\end{equation*}
Notice that we have 
\begin{equation*} 
a_{j,m} 
= 
\prod_{i=0}^j \frac{1}{2(i+1)( 2(m-i) + d)} .
\end{equation*}
To prove~\eqref{e.Deltapisq}, a direct computation gives that
\begin{align} \notag 
\lefteqn{
\Delta \left( |x|^{2(j+1)} (-\Delta)^j p(x) \right)
} \quad &
\\ 
\notag &
= 2(j+1)|x|^{2j} \left( (2j+d)+ 2 x\cdot \nabla \right) (-\Delta)^j p(x) - |x|^{2(j+1)} (-\Delta)^{j+1} p(x) 
\\ 
\notag &
= 2(j+1) \left( 2(m - j) + d \right) |x|^{2j} (-\Delta)^j p(x) - |x|^{2(j+1)} (-\Delta)^{j+1} p(x) 
. 
\end{align}
By the definition of $a_{j,m}$, we have that
\begin{equation*} 
2(j+1) \left( 2(m - j) + d\right) a_{j,m} = a_{j-1,m} 
\end{equation*}
and, therefore,
\begin{align} \label{e.Lap.telescoping}
\Delta \left( a_{j,m} |x|^{2(j+1)}(-\Delta)^{j} p(x) \right) 
& 
= 
a_{j-1,m} |x|^{2j} (-\Delta)^{j} p(x)
\\ \notag
&
\quad 
- a_{j,m} |x|^{2(j+1)} (-\Delta)^{j+1} p(x) .
\end{align}
Thus,~\eqref{e.Deltapisq} follows by telescoping since $a_{-1,m} = 1$. 

\smallskip
Next, taking gradients, we see that 
\begin{align*} 
\nabla^{m+2} q 
& 
= \sum_{j=0}^\infty a_{j,m} \sum_{k=0}^{m+2} \binom{m+2}{k} \left( \nabla^{k} |x|^{2(j+1)} \otimes \nabla^{m+2-k} (-\Delta)^{j} p(x) \right)(0)
\\ &
= 
\sum_{j=0}^{\lfloor \frac m2 \rfloor} a_{j,m} \binom{m+2}{2(j+1)} (2(j+1))! I_d^{\otimes (j+1)} \otimes \nabla^{m - 2j} (-\Delta)^{j} p 
\\ &
= 
\sum_{j=0}^{\lfloor \frac m2 \rfloor} (-1)^j a_{j,m} \frac{(m+2)!}{(m-2j)!} I_d^{\otimes (j+1)} \otimes \nabla^{m - 2j} (\Delta)^{j} p 
.
\end{align*}
The tensor $\nabla^{m - 2j} (-\Delta)^{j}$ can be estimated as
\begin{equation*} 
\left| I_d^{\otimes (j+1)} \otimes \nabla^{m - 2j} (-\Delta)^{j} p \right| \leq d^{\frac{j}2} |\nabla^m p| ,
\end{equation*}
and it follows that
\begin{equation*} 
\left| \nabla^{m+2} q \right| \leq |\nabla^m p| \sum_{j=0}^{\lfloor \frac m2 \rfloor} a_{j,m} C^j \frac{(m+2)!}{(m-2j)!}  .
\end{equation*}
Now, we estimate
\begin{equation*} 
\sum_{j=0}^{\lfloor \frac m2 \rfloor} a_{j,m}C^j
\frac{(m+2)!}{(m-2j)!} 
= 
\sum_{j=0}^{\lfloor \frac m2 \rfloor} 
a_{j,m} (Cj)^j \binom{m+2}{2j}
\leq 
C^m,
\end{equation*}
using
\begin{equation*} 
(2j)! a_{j,m} = \frac1{2j(2m+d)} \prod_{i=0}^{j-1} \frac{ 2(i+1) (2i + 1)}{2(i+1)( 2(m-j) + 2 i + d)} \leq \frac1{2j(2m+d)} .
\end{equation*}
This proves~\eqref{e.polyharmonic}. 
\end{proof}

The following lemma asserts that the macroscopic operator~$\mathscr{A}$ may be inverted on the space of polynomials. 

\begin{lemma}
\label{l.twist}
There exists $C(d,\Lambda)<\infty$ such that, for every~$m\in\N$ and~$p\in \P_m$, there exists $q\in \P_{m+2}$ satisfying 
\begin{equation}
-\mathscr{A} q = p \quad \mbox{in} \ \Rd
\end{equation}
such that $q(0)=0$, $\nabla q(0)=0$ and, for every $n\in\{0,\ldots,m\}$, 
\begin{equation}
\label{e.twist.bounds}
\left| \nabla^{n+2}q(0) \right|
\leq 
\sum_{k=n}^m 
C^{k} \left| \nabla^k p(0) \right|. 
\end{equation}
\end{lemma}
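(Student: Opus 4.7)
The plan is to decompose everything by homogeneous degree and reduce to a triangular linear system that can be solved using Lemma~\ref{l.polyharmonic}.

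First I would write $p = \sum_{k=0}^m p_k$ with $p_k \in \P_k^*$ and look for $q$ of the form $q = \sum_{j=2}^{m+2} q_j$ with $q_j \in \P_j^*$; this automatically enforces $q(0)=0$ and $\nabla q(0)=0$. Because $\ahom^{(n)}\colon\nabla^n q_j$ is homogeneous of degree $j-n$ (and vanishes when $j<n$), grouping the equation $-\mathscr{A}q = p$ by homogeneous degree gives the triangular system
\begin{equation*}
-\ahom\colon\nabla^2 q_{k+2} = p_k + \sum_{n=3}^{m+2-k}\ahom^{(n)}\colon\nabla^n q_{k+n}, \qquad k = 0,1,\ldots,m.
\end{equation*}
I would solve this by downward induction on $k$, starting at $k=m$ (where the right-hand side is simply $p_m$) and invoking Lemma~\ref{l.polyharmonic} at each step: at level $k$, the right-hand side $r_k\in\P_k^*$ is already determined by the previously constructed $q_{k+3},\ldots,q_{m+2}$, and Lemma~\ref{l.polyharmonic} furnishes $q_{k+2}\in\P_{k+2}^*$ together with the bound $|\nabla^{k+2}q_{k+2}| \leq C^k|\nabla^k r_k|$.

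The key tool for bounding the right-hand side is the clean tensor-contraction identity
\begin{equation*}
\bigl|\nabla^k\bigl(\ahom^{(n)}\colon\nabla^n q\bigr)\bigr| \leq |\ahom^{(n)}|\,|\nabla^{k+n}q|,
\end{equation*}
which I would verify directly from the multinomial-weighted definitions of the norm and contraction: expanding both sides in multi-index coordinates, applying Cauchy--Schwarz in the $n$-indices with weights $\binom{n}{\alpha}$, and collapsing the resulting double sum by means of the Vandermonde-type identity $\sum_{\alpha+\beta=\gamma,\,|\alpha|=n}\binom{k}{\beta}\binom{n}{\alpha} = \binom{k+n}{\gamma}$. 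The combinatorial factors cancel exactly, leaving no loss in the passage from $|\nabla^{k+n}q|$ to $|\nabla^k(\ahom^{(n)}\colon\nabla^n q)|$. Combined with the corrector bound $|\ahom^{(n)}|\leq C^n$ from Lemma~\ref{l.correctorsexist} and the polyharmonic bound from Lemma~\ref{l.polyharmonic}, this produces the pointwise recursion
\begin{equation*}
|\nabla^{k+2}q_{k+2}| \leq C^k\,|\nabla^k p_k| + C^k\sum_{n=3}^{m+2-k} C^n\,|\nabla^{k+n}q_{k+n}|.
\end{equation*}

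The estimate~\eqref{e.twist.bounds} should then follow by downward induction on $k$. The main obstacle, and the place that requires real care, is the bookkeeping of constants when unwinding this recursion. A naive iteration produces coefficients whose dependence on $m$ and $n$ looks considerably worse than $C^k$; the whole point is to exploit the triangular structure and the geometric summation in the $n$-variable (and to choose the universal constant $C$ appearing in~\eqref{e.twist.bounds} sufficiently large relative to those in Lemmas~\ref{l.correctorsexist} and~\ref{l.polyharmonic}) so that the contribution of each $|\nabla^k p(0)|$ collapses cleanly to $C^k$, independently of $m$ and the target derivative order $n+2$.
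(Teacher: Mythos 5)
Your reduction is sound as far as it goes: decomposing by homogeneous degree does produce the triangular system you write, solving it downward with Lemma~\ref{l.polyharmonic} does yield some $q\in\P_{m+2}$ with $-\mathscr{A}q=p$ and $q(0)=0$, $\nabla q(0)=0$, and your contraction inequality $\left|\nabla^k\left(\ahom^{(n)}\colon\nabla^n q\right)\right|\leq\left|\ahom^{(n)}\right|\left|\nabla^{k+n}q\right|$ is correct (Cauchy--Schwarz with multinomial weights plus the Vandermonde identity, exactly as you describe). Structurally this is the same perturbative inversion of $\mathscr{A}$ about $\nabla\cdot\ahom\nabla$ that the paper uses; the paper merely organizes it by correction rounds $q_k$ solving $-\nabla\cdot\ahom\nabla q_k=\mathscr{A}q_{k-1}-\nabla\cdot\ahom\nabla q_{k-1}$ with $q=\sum_k q_k$, while you organize it degree by degree.

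The genuine gap is the step you explicitly postpone: showing that your displayed recursion actually gives~\eqref{e.twist.bounds}. This is not a matter of ``geometric summation in $n$ plus a large universal $C$''. If you take~\eqref{e.twist.bounds} itself as the downward induction hypothesis (or any variant of the form $|\nabla^{k+2}q_{k+2}|\leq\sum_{j\geq k}C^jE^{\,j-k}|\nabla^jp_j|$) and insert it into $|\nabla^{k+2}q_{k+2}|\leq C_0^k|\nabla^kp_k|+C_0^k\sum_{n\geq3}C_0^n|\nabla^{k+n}q_{k+n}|$, the coefficient of each $|\nabla^jp_j|$ comes back multiplied by an extra factor of order $C_0^{k}$ --- the cost of Lemma~\ref{l.polyharmonic} at the current degree --- and no fixed base $C$ can absorb a factor exponential in the current level at each of the up-to-$m$ levels of the triangle. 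Unwinding the recursion along chains of short steps ($n=3$ repeatedly) accumulates a product of such factors over all intermediate degrees, i.e.\ a constant of size $C_0^{c m^2}$ in front of the top derivatives of $p$, which is precisely the $\exp(Cm^2)$-type loss the lemma is meant to beat; so the ``bookkeeping'' you defer is the entire content of the statement, not a routine check. This is also exactly where the paper's proof does its work, through the inductive bound $|\nabla^n q_k(0)|\leq C^{n+k}|\nabla^{n-2+k}p(0)|$ for the rounds $q_k$. To complete your argument you must formulate an induction hypothesis that genuinely closes under your recursion (or strengthen the inputs, e.g.\ a degree-uniform bound for inverting $-\nabla\cdot\ahom\nabla$ on homogeneous polynomials in these norms); as written, the key estimate is asserted rather than proved.
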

\begin{proof}
By Lemma~\ref{l.polyharmonic}, there exists $q_0\in \P_{m+2}$ satisfying
\begin{equation}
\left\{
\begin{aligned}
& 
-\nabla \cdot \ahom \nabla q_0 = 
p
\\ & 
q_0(0)= 0, \ \nabla q_0(0)=0, 
\\ &
\left| \nabla^{n} q_0(0) \right| 
\leq C^n \left|\nabla^{n-2} p(0) \right|, \ \ \forall  n\in\{2,\ldots,m+2\}. 
\end{aligned}
\right.
\end{equation}
Arguing inductively using Lemma~\ref{l.polyharmonic} and~\eqref{e.scrA.forward}, there exist~$q_k \in \P_{m+2-k}$ satisfying, for every $k\in\{ 1,\ldots,m+1\}$,
\begin{equation}
\left\{
\begin{aligned}
& 
-\nabla \cdot \ahom \nabla q_{k} = 
\mathscr{A}q_{k-1} - \nabla \cdot \ahom \nabla q_{k-1},
\\ & 
q_k(0)= 0, \ \nabla q_k(0)=0, 
\\ &
\left| \nabla^{n} q_0(0) \right| 
\leq
C^{n+k} \left|\nabla^{n-2+k} p(0) \right|, \quad \forall  n\in\{2,\ldots,m+2-k\}.
\end{aligned}
\right.
\end{equation}
Note that $q_{m+1}=0$. 
Thus, setting $q:= \sum_{k=0}^{m} q_k$, we obtain the lemma.
\end{proof}

We next compare the kernel of~$\mathscr{A}$ to that of~$\nabla \cdot \ahom\nabla$ and show that they agree at leading order.

\begin{lemma}
\label{l.Akernel}
There exists $C(d,\Lambda)<\infty$ such that, for every $m\in\N$ and $p\in \P_m$ satisfying~$-\nabla \cdot \ahom\nabla p = 0$, there exists $p' \in \P_{m}$ satisfying
\begin{equation}
-\mathscr{A}p' = 0,
\end{equation}
and
\begin{equation}
p'(0)=p(0), \quad \nabla p'(0) = \nabla p(0)
\quad \mbox{and} \quad 
\nabla^m p' = \nabla^m p 
\end{equation}
such that,  for every $n\in\{2,\ldots,m-1\}$, 
\begin{equation}
\label{e.twist.it}
\left| \nabla^{n}(p-p')(0) \right| 
\leq 
\sum_{k=n+1}^{m} C^{k} \left| \nabla^k p(0) \right|.
\end{equation}
In particular, for every $r > Cm$, 
\begin{equation}
\label{e.twist.it.far}
\left\| p - p' \right\|_{L^\infty(Q_r)} 
\leq
\sum_{k=3}^{m}
\left( \frac{Cr}{k} \right)^{k-1} \left| \nabla^k p(0) \right|.
\end{equation}
\end{lemma}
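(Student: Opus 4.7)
My strategy is to correct $p$ by adding a polynomial of degree at most $m-1$, so that the top-order derivative is preserved while the defect $\mathscr{A}p$ is absorbed. Since $-\nabla \cdot \ahom\nabla p = 0$, the $k=2$ term in $\mathscr{A} = \sum_{n=2}^\infty \ahom^{(n)}:\nabla^n$ vanishes on $p$, so
\begin{equation*}
\mathscr{A}p = \sum_{k=3}^m \ahom^{(k)}:\nabla^k p \in \mathbb{P}_{m-3}.
\end{equation*}
Apply Lemma~\ref{l.twist} with right-hand side $\mathscr{A}p \in \P_{m-3}$ to obtain $w \in \P_{m-1}$ satisfying $-\mathscr{A}w = \mathscr{A}p$, $w(0) = 0$, $\nabla w(0) = 0$, together with the quantitative bound
\begin{equation*}
\left| \nabla^{n+2} w(0) \right| \leq \sum_{k=n}^{m-3} C^k \left| \nabla^k (\mathscr{A}p)(0) \right|, \qquad n\in\{0,\ldots,m-3\}.
\end{equation*}
Setting $p' := p + w$ then yields $\mathscr{A}p' = 0$. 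Since $\deg w \leq m-1$, we have $\nabla^m p' = \nabla^m p$, and the conditions at the origin follow from $w(0) = \nabla w(0) = 0$.

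For~\eqref{e.twist.it}, I would bound $|\nabla^k(\mathscr{A}p)(0)|$ by combining the definition of $\mathscr{A}p$ with the corrector bound $|\ahom^{(\ell)}| \leq C^\ell$ from Lemma~\ref{l.correctorsexist}, giving
\begin{equation*}
\left| \nabla^k (\mathscr{A}p)(0) \right| \leq \sum_{\ell = 3}^{m-k} C^\ell \left| \nabla^{k+\ell} p(0) \right|.
\end{equation*}
Substituting this into the bound on $|\nabla^{n+2}w(0)|$ and reindexing via $j := k + \ell$ (which satisfies $n+3 \leq j \leq m$) gives a double sum whose inner combinatorial factor $(j - n - 2)$ may be absorbed into a slightly larger constant. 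This produces~\eqref{e.twist.it} after renaming $n+2$ to $n$.

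For the $L^\infty$ estimate~\eqref{e.twist.it.far}, I would expand $w$ in Taylor series around the origin: using $w(0) = \nabla w(0) = 0$ and $\deg w \leq m-1$,
\begin{equation*}
\|p - p'\|_{L^\infty(Q_r)} = \|w\|_{L^\infty(Q_r)} \leq \sum_{n=2}^{m-1} \frac{(\sqrt{d}\, r/2)^n}{n!} |\nabla^n w(0)|.
\end{equation*}
Plug in~\eqref{e.twist.it}, swap the order of summation, and bound the resulting tail $\sum_{n=2}^{j-1} r^n/n!$. The condition $r \geq Cm$ for sufficiently large $C$ makes this sum geometric with ratio $r/(n+1) \geq 2$ for all $n \leq m$, so it is controlled by a constant multiple of its largest term $r^{j-1}/(j-1)!$. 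Stirling's bound $(j-1)! \geq (j/(2e))^{j-1}$ then converts $r^{j-1}/(j-1)!$ to $(Cr/j)^{j-1}$, yielding~\eqref{e.twist.it.far} after enlarging the constant $C$ once more.

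The main obstacle is purely bookkeeping: tracking the tensor norms carefully through the iterated contractions and ensuring that the double sums collapse into the clean single sums stated in the lemma without blowing up the constants. The role of the hypothesis $r > Cm$ is critical and not decorative; it is exactly the threshold above which the Taylor series for $w$ behaves geometrically and Stirling produces the sharp $(Cr/k)^{k-1}$ form rather than something weaker.
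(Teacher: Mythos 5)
Your proposal is correct and follows essentially the same route as the paper: both apply Lemma~\ref{l.twist} to the right-hand side $\mathscr{A}p\in\P_{m-3}$ (nonzero only through the tensors $\ahom^{(k)}$, $k\geq3$, since $p$ is $\ahom$-harmonic), correct $p$ by the resulting polynomial of degree at most $m-1$ vanishing to second order at the origin, and then obtain \eqref{e.twist.it} by combining the bounds of Lemma~\ref{l.twist} with the bound $|\nabla^k(\mathscr{A}p)(0)|\lesssim\sum C^{j}|\nabla^{j}p(0)|$, and \eqref{e.twist.it.far} by Taylor expansion and a sum swap under $r>Cm$. The only cosmetic differences are the sign convention ($p'=p+w$ versus $p'=p-q$) and your explicit use of Stirling where the paper directly bounds $\sum_{n=2}^{j-1}(C|x|/n)^n$ by $(C|x|/j)^{j-1}$.
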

\begin{proof}
We may suppose $m\geq 3$, since $\mathscr{A}$ and $\nabla \cdot \a\nabla$ coincide on~$\P_{2}$. 
Let~$p\in \P_m$ be $\ahom$-harmonic. 
Then $\mathscr{A} p \in \P_{m-3}$ and thus, by applying Lemma~\ref{l.twist} and using~\eqref{e.scrA.forward}, we can find $q \in \P_{m-1}^*$ satisfying
\begin{equation}
-\mathscr{A}q = -\mathscr{A}p \quad \mbox{in} \ \Rd
\end{equation}
such that $q(0)=0$, $\nabla q(0)= 0$ and, for every $n\in\{3,\ldots,m\}$,  
\begin{align}
\label{e.twisties}
\left| \nabla^{n-1}q(0) \right| 
&
\leq 
\sum_{k=n-3}^{m-3} 
C^{k} \left| \nabla^k \mathscr{A} p(0) \right|
\\ & \notag
\leq
\sum_{k=n-3}^{m-3} 
C^{k}
\sum_{j=k+3}^m
C^{j} \left| \nabla^j p(0) \right| 
\\ & \notag
\leq
\sum_{j=n}^{m}
\sum_{k=n-3}^{j-3}
C^j \left| \nabla^j p(0) \right|
\leq 
\sum_{j=n}^{m} C^{j} \left| \nabla^j p(0) \right|.
\end{align}
Setting $p':= p-q$ yields~\eqref{e.twist.it}.

\smallskip

To get~\eqref{e.twist.it.far}, we estimate, for $|x|>Cm$, 
\begin{align*}
\left| p(x) - p'(x) \right| 
&
=
\left| 
\sum_{n=2}^{m-1} 
\nabla^n(p-p')(0) : \frac{x^{\otimes n}}{n!}
\right| 
\\ & 
\leq
\sum_{n=2}^{m-1} 
\left( \frac{C |x|}{n} \right)^n
\left| \nabla^n(p-p')(0) \right|
\\ & 
\leq 
\sum_{n=2}^{m-1} 
\left( \frac{C |x|}{n} \right)^n
\sum_{j=n+1}^{m} C^{j} \left| \nabla^j p(0) \right|
\\ & 
\leq 
\sum_{j=3}^{m}
\sum_{n=2}^{j-1} 
\left( \frac{C |x|}{n} \right)^n
C^{j} \left| \nabla^j p(0) \right|
\\ & 
\leq 
\sum_{j=3}^{m}
\left( \frac{C|x|}{j} \right)^{j-1} \left| \nabla^j p(0) \right|.
\end{align*}
This completes the proof. 
\end{proof}

\begin{lemma}
\label{l.polyhit.bound}
There exists $C(d,\Lambda)<\infty$ such that, for every $m\in\N$ and~$p\in\P_m$, there exists $\psi \in \mathbb{A}_{m+2}$ satisfying 
\begin{equation}
\label{e.polyhit}
\left\{ 
\begin{aligned}
& -\nabla \cdot \a\nabla \psi = p \quad \mbox{in} \ \Rd
\\ & 
\hat{\psi}(0) = 0, \ D\hat{\psi}(0) = 0,
\end{aligned}
\right. 
\end{equation}
and such that, for every $r\in [m,\infty)$,
\begin{equation}
\label{e.polyhit.bounds}
\left\| \psi \right\|_{\underline{L}^2(Q_r)}
\leq
\sum_{n=0}^m \left( \frac{Cr}{n+1} \right)^{n+2} \left| \nabla^np(0) \right|.
\end{equation}
\end{lemma}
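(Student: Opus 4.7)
The plan is to combine Lemma~\ref{l.twist} (which inverts~$\mathscr{A}$ on polynomials) with the general size bound~\eqref{e.psi.easybound} on elements of~$\A_n$. First I apply Lemma~\ref{l.twist} to the given~$p \in \P_m$ to produce a polynomial~$q \in \P_{m+2}$ satisfying $-\mathscr{A}q = p$, $q(0) = 0$, $\nabla q(0) = 0$, and the derivative bounds~\eqref{e.twist.bounds}. I then define $\psi := \sum_{k=0}^{m+2} \nabla^k q : \phi^{(k)}$, so that $\psi \in \A_{m+2}$. Identity~\eqref{e.poly.consistency} immediately gives $-\nabla \cdot \a\nabla \psi = -\mathscr{A}q = p$ in~$\R^d$, and the normalization conditions on~$\hat\psi$ are inherited from the choice of~$q$ in Lemma~\ref{l.twist}.

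For the bound~\eqref{e.polyhit.bounds}, I apply~\eqref{e.psi.easybound} to $\psi$ (with $m$ there replaced by $m+2$). Since $q(0) = \nabla q(0) = 0$, the $k=0$ and $k=1$ terms drop out, leaving
\begin{equation*}
\|\psi\|_{\underline{L}^2(Q_r)} \leq \sum_{j=2}^{m+2} \left(\frac{C_0 r}{j+1}\right)^j |\nabla^j q(0)|.
\end{equation*}
Substituting~\eqref{e.twist.bounds} (with $j = n+2$) and swapping the order of summation reduces the problem to estimating, for each fixed $n \in \{0,\ldots,m\}$, the inner sum $\sum_{j=2}^{n+2}(C_0 r/(j+1))^j$ by a constant multiple of $(Cr/(n+1))^{n+2}$, modulo absorbing a factor of~$C_0^n$ into the constant.

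The hardest step will be this final arithmetic estimate. The key observation is that, for $r \geq m$ and for $C_0$ chosen large enough that $C_0 r/(k+1) \geq e$ throughout the relevant range, the function $j \mapsto (C_0 r/(j+1))^j$ is monotone increasing, so the inner sum is dominated by $(n+1)$ times its final term $(C_0 r/(n+3))^{n+2}$. The remaining $C_0^n$ factor coming from Lemma~\ref{l.twist} is then absorbed by passing to a larger constant, exploiting the fact that the target power of~$r$ on the right-hand side of~\eqref{e.polyhit.bounds} is $n+2$ rather than~$n$; this is precisely why the weight there is $(n+1)^{-(n+2)}$ (giving two extra powers of~$r$ available to offset~$C_0^n$) rather than the more naive $(n+1)^{-n}$ one might first guess. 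Verifying the monotonicity carefully and tracking constants through the double sum is the main bookkeeping obstacle, but no ideas beyond Lemma~\ref{l.twist} and~\eqref{e.psi.easybound} should be required.
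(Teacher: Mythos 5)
Your proposal is correct and follows essentially the same route as the paper's proof: invert $\mathscr{A}$ on $p$ via Lemma~\ref{l.twist}, set $\psi:=\sum_{k=0}^{m+2}\nabla^k q:\phi^{(k)}$ so that~\eqref{e.poly.consistency} gives the equation, then combine~\eqref{e.psi.easybound} with~\eqref{e.twist.bounds}, swap the order of summation, and dominate the inner sum by its last term using $r\geq m$. The only cosmetic difference is in the bookkeeping of constants (the absorption of $C^{n}$ works simply because the final constant is raised to the power $n+2\geq n$, no extra powers of $r$ are actually needed), and your claim that the normalization $\hat\psi(0)=0$, $D\hat\psi(0)=0$ is inherited from $q(0)=\nabla q(0)=0$ is stated with the same brevity as in the paper itself.
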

\begin{proof}
By Lemma~\ref{l.twist}, we can find~$q\in \P_{m+2}$ satisfying $-\mathscr{A}q = p$, $\hat{q}(0)=0$, $D\hat{q}(0)=0$ and, for every $n\in\{0,\ldots,m\}$, 
\begin{equation}
\label{e.tw2}
\left| \nabla^{k+2}q(0) \right| 
\leq 
\sum_{n=k}^m 
C^{n} \left| \nabla^n p(0) \right|. 
\end{equation}
Define $\psi:= \sum_{k=0}^{m+2} \nabla^k q:\phi^{(k)}$. 
It is clear from~\eqref{e.poly.consistency} that~$\psi$ satisfies~\eqref{e.polyhit}. By~\eqref{e.psi.easybound} and~\eqref{e.tw2}, for every $r\in [m,\infty)$, 
\begin{align*}
\left\| \psi \right\|_{\underline{L}^2(Q_r)}
&
\leq
\sum_{k=2}^{m+2}
\left( \frac{Cr}{k+1} \right)^k \left| \nabla^kq(0) \right|
\\ & 
\leq
\sum_{k=0}^m 
\sum_{n=k}^m 
\left( \frac{Cr}{k+1} \right)^{k+2} 
C^{n-k} \left| \nabla^n p(0) \right|
\\ & 
= 
\sum_{n=0}^m
\sum_{k=0}^n
\left( \frac{Cr}{k+1} \right)^{k+2} 
C^{n-k} \left| \nabla^n p(0) \right|
\\ & 
\leq 
\sum_{n=0}^m
\left( \frac{Cr}{n+1} \right)^{n+2}
\left| \nabla^n p(0) \right|.
\end{align*}
The proof is complete. 
\end{proof}

\begin{remark}
Notice that we can also put the right hand side of the estimate~\eqref{e.polyhit.bounds} in terms of the differences~$D^np(0)$ rather than the pointwise derivatives~$\nabla^np(0)$. That is, we can replace~\eqref{e.polyhit.bounds} by 
\begin{equation}
\label{e.polyhit.bounds.D}
\left\|
\psi 
\right\|_{\underline{L}^2(Q_r)}
\leq
C \sum_{n=0}^m \left( \frac{Cr}{n+1} \right)^{n+2} \left| D^np(0) \right|.
\end{equation}
To see this, we first notice that, for any polynomial $p\in\P_m$, 
\begin{align}
\label{e.poly.nab.D}
\left| \nabla^n p(0) \right| 
&
\leq
\sum_{k=n}^m
\frac{C^{k-n}}{(k-n)!} \left|D^{k} p(0) \right| k^{k-n}
\leq
\sum_{k=n}^m
\left( \frac{Ck}{k-n} \right)^{k-n}\left|D^{k} p(0) \right|.
\end{align}
Thus, for every $\psi$ and $p$ as in the statement of Lemma~\ref{l.polyhit.bound} and~$r\in [Cm,\infty)$, we may combine~\eqref{e.polyhit.bounds} and~\eqref{e.poly.nab.D} to obtain
\begin{align}
\label{e.polyhit.bounds2}
\left\|
\psi 
\right\|_{\underline{L}^2(Q_r)}
&
\leq
C \sum_{n=0}^m \left( \frac{Cr}{n+1} \right)^{n+2} \left| \nabla^np(0) \right| 
\\ & \notag
\leq
C \sum_{n=0}^m \sum_{k=n}^m
\left( \frac{Cr}{n+1} \right)^{n+2}
\left( \frac{Ck}{k-n} \right)^{k-n}\left|D^{k} p(0) \right| 
\\ & \notag
=
C 
\sum_{k=0}^m
\left( \frac{Cr}{k+1} \right)^{k+2}
\left|D^{k} p(0) \right|
\sum_{n=0}^k
\left( \frac{Ckn}{r(k-n)} \right)^{k-n} 
\\ & \notag
\leq
C \sum_{k=0}^m
\left( \frac{Cr}{k+1} \right)^{k+2}
\left|D^{k} p(0) \right|.
\end{align}
This is~\eqref{e.polyhit.bounds.D}. 
\end{remark}

For $u \in L^1(U)$ and $z\in \Zd$ such that $z+Q_1\subseteq U\subseteq\Rd$, we define
\begin{equation}
\hat{u}(x) : = \int_{z+Q_1} u(x)\,dx \quad \mbox{for } \, x\in z +Q_1.
\end{equation}
We next estimate the growth of an element~$\psi\in \A_m$ by its \emph{intrinsic} differences at the origin, namely $\{ D^k\hat{\psi}(0)\}_{k=0,\ldots,m}$, rather than the differences (or derivatives) of the polynomial~$q$ in its representation, as for instance in~\eqref{e.psi.easybound}. 

\smallskip

\begin{lemma}
\label{l.corr.growth}
There exists $C(d,\Lambda)<\infty$ such that, for every $m,n\in\N$ with $n\leq m$, $q\in \P_m$ and $\psi\in \A_m$ satisfying
\begin{equation}
\label{e.qpsi.relate}
\psi = \sum_{k=0}^m \nabla^kq:\phi^{(k)},
\end{equation}
we have the estimate
\begin{equation}
\label{e.corr.growth.q}
\left( \frac{m}{n+1} \right)^n
\left| \nabla^nq(0) \right|
\leq
\sum_{k=n}^m
\left( \frac{Cm}{k+1} \right)^{k} 
\left| D^k \hat{\psi} (0) \right| 
\end{equation}
and, consequently, for every $r\geq m$, 
\begin{equation}
\label{e.corr.growth}
\left\| \psi \right\|_{\underline{L}^2(Q_r)}
\leq
\sum_{k=0}^m
\left( \frac{Cr}{k+1} \right)^k
\left| D^k \hat{\psi} (0) \right|.
\end{equation}
Moreover, for symmetric tensors $M^{(0)},\ldots,M^{(m)}$ with $M^{(k)} \in (\Rd)^{\otimes k}$, there exists a unique $\psi\in \A_m$ such that 
\begin{equation}
\label{e.slap.ya}
D^k \hat{\psi} (0) = M^{(k)}, 
\qquad \forall k\in\{0,\ldots,m\}.
\end{equation}
\end{lemma}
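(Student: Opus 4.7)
The strategy is to first express $\hat\psi(z)$, as $z$ ranges over $\Z^d$, as a polynomial in $z$ whose leading behavior matches $q(z)$ up to controlled lower-order corrections; then invert this triangular relationship to bound $|\nabla^n q(0)|$ by partial derivatives of $\hat\psi$ at the origin; then convert from derivatives of $\hat\psi$ to its forward differences using that $\hat\psi$ is (the restriction of) a polynomial of degree~$\leq m$; and finally deduce~\eqref{e.corr.growth} and~\eqref{e.slap.ya}.

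For the first two steps, fix $z\in\Z^d$, Taylor-expand each $(\nabla^k q)(z+y)$ around $y=0$, substitute $x=z+y$ in~\eqref{e.qpsi.relate}, and use the periodicity of $\phi^{(k)}$ to get
\begin{equation*}
\hat\psi(z) = \sum_{k=0}^m\sum_{j=0}^{m-k}\frac{1}{j!}\,\nabla^{k+j}q(z)\,:\,\fint_{Q_1}\phi^{(k)}(y)\otimes y^{\otimes j}\,dy.
\end{equation*}
The pair $(k,j)=(0,0)$ gives exactly $q(z)$, while $(0,1)$ (by the symmetry of $Q_1$ about the origin) and $(k,0)$ for $k\geq 1$ (by the mean-zero property of $\phi^{(k)}$) all vanish. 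Grouping the remaining terms by total order $n=k+j\geq 2$ produces
\begin{equation*}
\hat\psi(z) = q(z) + \sum_{n=2}^m \bigl(\nabla^n q(z):T_n\bigr),
\end{equation*}
with constant tensors $T_n\in\mathbb{T}_n$ satisfying $|T_n|\leq C^n$ by Cauchy--Schwarz combined with $\|\phi^{(k)}\|_{L^2(Q_1)}\leq C^k$ (which follows from~\eqref{e.corrector.bounds} and Poincar\'e). Differentiating in $z$ and evaluating at the origin yields the triangular system
\begin{equation*}
\nabla^n\hat\psi(0) = \nabla^n q(0) + \sum_{\ell=n+2}^m \nabla^\ell q(0):T_{\ell-n},
\end{equation*}
which I invert inductively from $n=m$ downward. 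Each iteration produces paths of length-$\geq 2$ steps from $n$ up to~$\ell$; the number of such paths is at most $2^{\ell-n}$, which can be absorbed into the constant to give $|\nabla^n q(0)| \leq \sum_{\ell=n}^m C^{\ell-n}\,|\nabla^\ell\hat\psi(0)|$.

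Because $\hat\psi$ extends to a polynomial of degree $\leq m$ on $\R^d$, the standard relation between partial derivatives and iterated forward differences (the same one recorded in~\eqref{e.poly.nab.D}) yields $|\nabla^\ell\hat\psi(0)| \leq \sum_{k=\ell}^m (Ck/(k-\ell))^{k-\ell}|D^k\hat\psi(0)|$. Plugging this into the bound from the previous paragraph, interchanging the order of summation, and performing the resulting inner geometric sum (essentially the same telescoping as in the remark after~\eqref{e.polyhit.bounds2}) yields~\eqref{e.corr.growth.q}. The $L^2$ estimate~\eqref{e.corr.growth} then follows by inserting~\eqref{e.corr.growth.q} into~\eqref{e.psi.easybound} and summing a further geometric series in~$n$ using $r\geq m$. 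For the last claim, the linear map $\A_m\to\bigoplus_{k=0}^m\mathbb{T}_k$, $\psi\mapsto(D^k\hat\psi(0))_{k=0}^m$, acts between finite-dimensional spaces of the same dimension $\dim\A_m=\dim\P_m=\sum_{k=0}^m\dim\mathbb{T}_k$, and~\eqref{e.corr.growth.q} shows its kernel is trivial, so it is an isomorphism and~\eqref{e.slap.ya} is solvable uniquely.

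I expect the main obstacle to be the bookkeeping in the double sum that arises when the two bounds above are composed: the coefficient of $|D^k\hat\psi(0)|$ has to telescope into precisely $(Cm/(k+1))^{k}(m/(n+1))^{-n}$ in order to produce the sharp prefactor $(m/(n+1))^{n}$ on the left of~\eqref{e.corr.growth.q}, rather than a looser factor depending on $k$ alone. The key algebraic input is that the exponents $\alpha=\ell-n$ and $\beta=k-\ell$ appearing in each summand $C^\alpha(Ck/\beta)^\beta$ add to $k-n$, so the worst case over~$\ell$ can be controlled uniformly---exactly analogous to the calculation performed after~\eqref{e.polyhit.bounds2}.
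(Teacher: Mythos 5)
Your argument is correct (at the same level of rigor the paper itself allows for tensor--contraction and combinatorial constants), but it is organized genuinely differently from the paper's proof. The paper proves \eqref{e.corr.growth.q} by induction on $m$: it builds an auxiliary element $\zeta=\sum_{k=0}^m\nabla^kq_m:\phi^{(k)}\in\A_m$ from the top difference $D^m\psi$, shows $|D^n\hat\zeta(0)|\leq C^{m-n}|D^m\psi|$, applies the inductive hypothesis to $\psi-\zeta\in\A_{m-1}$, and carefully tracks the constant $\mathsf{C}$ through the induction; existence in \eqref{e.slap.ya} is likewise constructed inductively, and uniqueness comes from the observation that $D^m\hat\psi(0)=0$ forces $\psi\in\A_{m-1}$. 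You instead derive the explicit lattice identity $\hat\psi(z)=q(z)+\sum_{n=2}^m\nabla^nq(z):T_n$ for $z\in\Z^d$, with corrector-moment tensors satisfying $|T_n|\leq C^n$, invert this triangular system directly, convert derivatives of the degree-$m$ polynomial extension into forward differences via \eqref{e.poly.nab.D}, and get \eqref{e.slap.ya} from a dimension count plus injectivity. Your route buys an explicit change of basis between $\{\nabla^nq(0)\}_{n\leq m}$ and the intrinsic lattice data, avoiding induction on the statement itself; the paper's route avoids the Taylor/moment bookkeeping by working with $D^k\hat\psi(0)$ in one pass, and its existence argument is constructive rather than abstract. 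Two small points to record if you write this up: (i) your worry about the inner sum needing to ``telescope precisely'' is unfounded---since the right side of \eqref{e.corr.growth.q} carries $C^k$, every loss of size $\exp(O(k))$ (the path count $2^{\ell-n}$ in the inversion, the factor $\max_{t}(k/t)^t\leq e^{k/e}$ from \eqref{e.poly.nab.D}, and the ratio $(k+1)^k/((n+1)^n k^{k-n})\leq e\, e^{k/e}$ arising when the prefactor $(m/(n+1))^n$ is moved across) is absorbed into $C$, and the only essential structural input is $m\geq k$; (ii) in the dimension count, the equality $\dim\A_m=\dim\P_m$ is not free a priori, since it amounts to injectivity of $q\mapsto\sum_k\nabla^kq:\phi^{(k)}$, but this follows from the estimate \eqref{e.corr.growth.q} you have just proved (apply it with $\psi\equiv0$), so there is no circularity---just say so explicitly.
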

\begin{proof}
We proceed by first proving the estimate~\eqref{e.corr.growth.q} by induction in~$m$. The statement is clearly true for $m\in\{0,1\}$. Suppose, for some $m\in\N$ with $m\geq 1$, that~\eqref{e.corr.growth} is valid for~$m-1$ in place of~$m$, that is, there is $\mathsf{C}(d,\Lambda)<\infty$ such that, for every $\psi \in \A_{m-1}$ and~$q\in \P_{m-1}$ satisfying~\eqref{e.qpsi.relate} and $n\in\{0,\ldots,m-1\}$, we have 
\begin{equation}
\label{e.corr.growth.ind.q}
\left( \frac{m-1}{n+1} \right)^n
\left| \nabla^nq(0) \right|
\leq
\sum_{k=n}^{m-1}
\left( \frac{ \mathsf{C} (m-1)}{k+1} \right)^{k}
\left| D^k \hat{\psi} (0) \right|.
\end{equation}
Let $\psi \in \A_{m}$ and~$q\in \P_{m}$ be as in the lemma. Let $q_m$ be the unique polynomial of degree~$m$ satisfying
\begin{equation}
D^mq_m = D^m\psi  
\quad \mbox{and} \quad
D^kq_m(0) = 0,  \ \  \forall k\in\{0,\ldots,m-1\},
\end{equation}
and let~$\zeta \in \A_{m}$ be defined by
\begin{equation}
\zeta := 
\sum_{k=0}^{m}  \nabla^k q_m :  \phi^{(k)}. 
\end{equation}
Then, for every $k,n\in\{0,\ldots,m\}$ with $k+n\leq m$ and $x\in\Rd$, 
\begin{equation}
\label{e.nabkDn.bound}
\left| \nabla^k D^n q_m(x) \right| 
\leq
\frac{C^{m-n-k}}{(m-n-k)!} \left|D^{m} \psi \right| (k+|x|)^{m-n-k}.
\end{equation}
Hence, by~\eqref{e.corrector.bounds} and~\eqref{e.psi.easybound}, we get, for every $r\geq m$ and~$n\in\{0,\ldots,m\}$, 
\begin{align}
\label{e.zeta.bound}
\left| D^n \hat\zeta(0) \right| 
& \leq
\left\| D^n \zeta \right\|_{\underline{L}^2(Q_1)}
\\ & \notag
\leq
\sum_{k=0}^{m-n} \left\|  \nabla^k D^nq_m  \, \colon \phi^{(k)}  \right\|_{\underline{L}^2(Q_1)}
\\ & \notag
\leq
\left|D^{m}\psi \right| 
\sum_{k=0}^{m-n}
C^k
\left(
\frac{C(k+1)}{m-n-k}
\right)^{m-n-k}
\\ & \notag
\leq 
C^{m-n}
\left|D^{m} \psi \right|. 
\end{align}
Since $\psi - \zeta \in \A_{m-1}$, by the induction hypothesis~\eqref{e.corr.growth.ind.q},~\eqref{e.nabkDn.bound} and~\eqref{e.zeta.bound}, we obtain, for every~$n\in\{0,\ldots,m-1\}$,
\begin{align*}
\lefteqn{
\left( \frac{m}{n+1} \right)^n\left| \nabla^n q(0) \right| 
} \quad & 
\\ &
\leq 
\left( \frac{m}{n+1} \right)^n\left| \nabla^n (q-q_m)(0) \right| 
+ \left( \frac{m}{n+1} \right)^n\left| \nabla^n q_m(0) \right|
\\ & 
\leq 
\left( \frac{m}{m-1} \right)^n\,
\sum_{k=n}^{m-1}
\left( \frac{ \mathsf{C} (m-1)}{k+1} \right)^{k}
\left(
\left| D^k \hat{\psi}(0) \right| + \left| D^k \hat{\zeta}(0) \right|\right)
+
\left( \frac{Cn}{m-n} \right)^{m-n} 
\left| D^m \psi \right|
\\ & 
\leq 
\sum_{k=n}^{m-1}
\left( \frac{ \mathsf{C}m}{k+1} \right)^{k}
\left| D^k \hat{\psi}(0) \right|
+
\left| D^m \psi \right|
\sum_{k=n}^{m-1} C^{m-k} \left( \frac{ \mathsf{C}m}{k+1} \right)^{k}
+
C^m  \left| D^m \psi \right|.
\end{align*}
If~$\mathsf{C}$ is sufficiently large ($\mathsf{C} \geq 8 e C$ suffices\footnote{Details can be found in a commented-out portion of the latex source of this paper, which can be downloaded from the arXiv.}), 
then the second term on the right side is bounded by
\begin{align*}
\sum_{k=n}^{m-1} C^{m-k} \left( \frac{ \mathsf{C}m}{k+1} \right)^{k}
\leq 
\frac12\left( \frac{\mathsf{C}m}{m+1} \right)^{m}.
\end{align*}
Combining the above and requiring $\mathsf{C}$ to be sufficiently large once again, we obtain that, for every~$n\in\{0,\ldots,m-1\}$, 
\begin{equation}
\label{e.indystepcomplete}
\left( \frac{m}{n+1} \right)^n
\left| \nabla^n q(0) \right| 
\leq
\sum_{k=n}^{m}
\left( \frac{ \mathsf{C} m}{k+1} \right)^{k}
\left| D^k \hat{\psi} (0) \right|.
\end{equation}
Note that the bound~\eqref{e.indystepcomplete} for $n=m$ was proved already in~\eqref{e.zeta.bound}. 
This completes the induction step and thus the proof of~\eqref{e.corr.growth.q}. 

\smallskip

The second estimate~\eqref{e.corr.growth} is an immediate consequence of~\eqref{e.psi.easybound} and~\eqref{e.corr.growth.q}. Indeed, combining these yields, for every $r\geq m$, 
\begin{align*}
\left\| \psi \right\|_{\underline{L}^2(Q_r)}
&
\leq 
\sum_{k=0}^m
\left( \frac{Cr}{k+1} \right)^k \left| \nabla^k q(0) \right|
\\ & 
\leq
\sum_{k=0}^m \sum_{n=k}^m
\left( \frac{Cr}{m} \right)^k
\left( \frac{Cm}{n+1} \right)^n
\left| D^n \hat{\psi}(0) \right| 
\\ & 
=
\sum_{n=0}^m \sum_{k=0}^n
\left( \frac{Cr}{m} \right)^k
\left( \frac{Cm}{n+1} \right)^n
\left| D^n \hat{\psi}(0) \right| 
\\ & 
\leq 
\sum_{n=0}^m
\left( \frac{Cr}{m} \right)^n
\left( \frac{Cm}{n+1} \right)^n
\left| D^n \hat{\psi}(0) \right| 
=
\sum_{n=0}^m 
\left( \frac{Cr}{n+1} \right)^n 
\left| D^n \hat{\psi}(0) \right| .
\end{align*}

\smallskip

Turning to the proof of the last statement, we note that uniqueness is clear from the fact that $\psi \in \A_{m}$ and $D^m\hat{\psi}(0)=0$ implies that $\psi \in \A_{m-1}$. We prove the existence part of the second statement by induction. Its validity is clear for~$m\in\{0,1\}.$ We therefore suppose, for some~$m\in\N$ with~$m\geq 1$, the statement is valid for~$m-1$ in place of~$m$. Let $M^{(0)},\ldots,M^{(m)}$ be as in the statement and define~$q\in \P_{m}^*$ and~$\zeta \in \A_{m}$ by
\begin{equation}
q(x):= \frac1{m!} M^{(m)} : x^{\otimes m} 
\quad \mbox{and} \quad
\zeta(x):= 
\sum_{k=0}^{m}  \nabla^k q(x) :  \phi^{(k)}(x). 
\end{equation}
It is clear that~$D^m\zeta = M^{(m)}$. 
By the induction hypothesis, there exists~$\xi\in \A_{m-1}$ such that 
\begin{equation}
D^k \hat{\xi}(0) = M^{(k)} - D^k \hat{\zeta}(0), \quad \forall k\in\{0,\ldots,m-1\}. 
\end{equation}
Defining $\psi:= \zeta + \xi \in \A_{m}$, we obtain~\eqref{e.slap.ya}. The proof of the second statement, and thus of the lemma, is now complete.
\end{proof}

\subsection{Entire solutions with slow exponential growth: the space $\A_\infty(\delta)$}

Recall that, for each $\delta>0$, we denote \begin{equation}
\label{e.Pinftydelta}
\mathbb{P}_\infty(\delta) 
:= 
\left\{ 
w \, : \,
w = \sum_{n=0}^\infty w_n, \ w_n \in \mathbb{P}_n^*, \
\sum_{n=0}^\infty 
\delta^{-n} \left| \nabla^n w_n(0) \right| 
< \infty
\right\}.
\end{equation}
The linear space $\P_\infty(\delta)$ is a Banach space with respect to the norm
\begin{equation}
\left\| u \right\|_{\P_\infty(\delta)} 
:=
\sum_{n=0}^\infty 
\delta^{-n} \left| \nabla^n u(0) \right|.
\end{equation}
By~\eqref{e.corrector.bounds}, there exists $\delta_0(d,\Lambda)>0$ such that~$\mathscr{A}u$ is well-defined for every $u\in \P_\infty(\delta_0)$ and moreover maps~$\mathbb{P}_\infty(\delta)$ to itself for every $\delta \in (0,\delta_0]$. Indeed, if $\delta$ is sufficiently small, then, for every $u\in \P_{\infty}(\delta)$ and $k\in\N$, 
\begin{align}
\label{e.scrA.forward}
\left\| \mathscr{A} u \right\|_{\P_\infty(\delta)}
&
=
\sum_{n=0}^\infty
\delta^{-n} \left| \nabla^n \mathscr{A} u(0) \right|
\\ & \notag
\leq
\sum_{n=0}^\infty
\sum_{k=n+2}^\infty
\delta^{-n} C^{k} \left| \nabla^k u(0) \right| 
\\ & \notag
\leq 
\sum_{k=2}^\infty
\sum_{n=0}^{k-2}
(\delta C)^{k}
\delta^{-k} \left| \nabla^k u(0) \right| 
\leq
C \delta^{2} \left\| u \right\|_{\P_\infty(\delta)} .
\end{align}
We say that $u\in \P_\infty(\delta)$ is \emph{$\mathscr{A}$--harmonic} if $\mathscr{A} u = 0$. 

\smallskip

For $\delta \in (0,\delta_0]$, we define the space 
\begin{equation}
\A_\infty(\delta):=
\left\{ 
\psi = \sum_{k=0}^\infty \nabla^k u:\phi^{(k)} 
\, :\,
u \in \P_\infty(\delta) 
\right\},
\end{equation}
where the $\delta_0(d,\Lambda)>0$ is taken small enough that the sum is absolutely convergent, locally uniformly in~$\Rd$. Indeed, observe that if~$\delta_0(d,\Lambda)>0$ is sufficiently small and $\delta\in (0,\delta_0]$, then, for every $u\in \P_\infty(\delta)$,  
\begin{align*}
\left\| 
\sum_{k=0}^\infty \nabla^k u:\phi^{(k)} 
\right\|_{\underline{L}^2(Q_r)}
& 
\leq
\sum_{k=0}^\infty
C^k
\left\| \nabla^k u \right\|_{L^\infty(Q_r)}
\\ & 
\leq 
\sum_{k=0}^\infty
C^k
\sum_{n=k}^\infty 
\left( \frac{Cr}{n-k+1} \right)^{n-k} \left| \nabla^nu(0) \right| 
\\ & 
\leq 
\left\| u \right\|_{\P_\infty(\delta)}
\sum_{k=0}^\infty
(C\delta)^k
\sum_{n=k}^\infty 
\left( \frac{C\delta r}{n-k+1} \right)^{n-k}
\\ & 
\leq 
\left\| u \right\|_{\P_\infty(\delta)}
\exp\left( C\delta r \right)
\sum_{k=0}^\infty
(C\delta)^k
\\ & 
\leq
\left\| u \right\|_{\P_\infty(\delta)}
\exp\left( C\delta r \right).
\end{align*}
In particular, an element $\psi\in \A_\infty(\delta)$ grows at most like a slow exponential:
\begin{equation}
\limsup_{r\to \infty} \,
\exp(-C\delta r)
\left\| \psi \right\|_{\underline{L}^2(Q_r)}
< \infty .
\end{equation}

\smallskip

\section{Large-scale analyticity}
\label{s.analyticity}

In this section we prove the main result of the paper, namely the quantitative, large-scale analyticity of solutions (Theorem~\ref{t.analyticity}).
The first step is to \emph{control the low frequencies} by obtaining regularity of solutions restricted to the lattice~$\Zd$. This is accomplished in Lemma~\ref{l.diffbound}, below, the proof of which is based on the simple idea of Moser and Struwe~\cite{MS} of exploiting the periodic structure of the equation to obtain estimates on integer finite differences of solutions which are analogous to the classical pointwise bounds for the derivatives of harmonic functions. The second step of obtaining control the \emph{high frequencies} is more involved and the focus of most of the section.

\begin{lemma}[Bounds for iterated differences]
\label{l.diffbound}
There exists $C(d,\Lambda) <\infty$ such that, for every 
$m\in\N$, $R \geq m+2$ and $u \in H^1(Q_R)$ satisfying
\begin{equation}
\label{e.pdediffs}
-\nabla \cdot \a\nabla u = 0\quad \mbox{in} \ Q_R,
\end{equation}
we have the estimate
\begin{equation}
\label{e.diffbound}
\left\| D^m u \right\|_{L^2(Q_1)}
\leq
\left( \frac{Cm}{R} \right)^m 
\left\| u \right\|_{\underline{L}^2(Q_R)}.
\end{equation}
\end{lemma}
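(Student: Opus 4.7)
The plan is to exploit the $\Zd$-periodicity of $\a(\cdot)$ via the Moser--Struwe observation that integer translates of $u$ are again solutions of $-\nabla\cdot\a\nabla v=0$ on the translated domain. Writing
\begin{equation*}
(D^\alpha u)(x) = \sum_{0\leq \beta\leq \alpha} (-1)^{|\alpha|-|\beta|}\binom{\alpha}{\beta}\, u(x+\beta)
\end{equation*}
and invoking periodicity, one sees immediately that $D^\alpha u$ is itself a solution of the same equation on the intersection of the relevant translated copies of $Q_R$, which contains a concentric cube of side length comparable to $R - 2|\alpha|$.

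Next I would combine the classical Caccioppoli inequality
\begin{equation*}
\| \nabla v \|_{L^2(Q_r)} \leq \frac{C}{s - r}\, \| v \|_{L^2(Q_s)}, \qquad 0 < r < s,
\end{equation*}
(valid for any solution $v$ of the equation on $Q_s$) with the pointwise identity $D_j v(x) = \int_0^1 \partial_j v(x + t e_j)\,dt$ and Cauchy--Schwarz in $t$ to obtain the one-step estimate
\begin{equation*}
\| D_j v \|_{L^2(Q_r)} \leq \| \nabla v \|_{L^2(Q_{r+2})} \leq \frac{C}{s - r - 2}\, \| v \|_{L^2(Q_s)}, \qquad r + 2 < s.
\end{equation*}

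In the regime $R \geq C_0 m$ for a sufficiently large $C_0 = C_0(d,\Lambda)$, I would then iterate this estimate on a telescoping sequence $1 = r_0 < r_1 < \cdots < r_m$ chosen with $r_{k+1} - r_k \sim R/m$ and $r_k \leq R - 2(m-k)$; the latter constraint makes $D^{m-k} u$ a solution of the equation on $Q_{r_k}$ by the first step. Each of the $m$ applications of the one-step bound contributes a factor of order $m/R$, and composing them yields $\| D^m u \|_{L^2(Q_1)} \leq (Cm/R)^m \| u \|_{L^2(Q_R)}$. Converting to the normalized norm via $\| u \|_{L^2(Q_R)} = R^{d/2} \| u \|_{\underline{L}^2(Q_R)}$ costs only a polynomial-in-$R$ factor that is absorbed by enlarging $C$ at the end. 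For the residual regime $m + 2 \leq R < C_0 m$, the target prefactor $(Cm/R)^m$ is bounded below by $(C/C_0)^m$, so the crude estimate $\| D^m u \|_{L^2(Q_1)} \lesssim 2^m \| u \|_{L^2(Q_R)}$, obtained by expanding $D^m u$ into $O(2^m)$ integer translates of $u$ and applying the triangle inequality, suffices once $C$ is taken large enough depending only on $d$.

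The main obstacle is bookkeeping of the domains: arranging the radii $r_k$ so they simultaneously fit inside the shrinking cubes $Q_{R - 2(m-k)}$ on which $D^{m-k} u$ is a solution, while still admitting an increment $r_{k+1} - r_k$ comparable to $R/m$, and keeping the ratios $(r_{k+1}/r_k)^{d/2}$ arising from passing between normalized and unnormalized $L^2$ norms uniformly bounded in $k$. Once these geometric parameters are chosen, the iteration itself is mechanical.
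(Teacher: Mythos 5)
Your overall strategy (periodicity makes integer differences of $u$ solutions, then Caccioppoli iteration) is the same as the paper's, but there is a genuine gap in the final normalization step. Your iteration, carried out in unnormalized norms, yields $\left\| D^m u \right\|_{L^2(Q_1)} \leq \left( Cm/R\right)^m \left\| u \right\|_{L^2(Q_R)}$, and you then convert via $\left\| u \right\|_{L^2(Q_R)} = R^{d/2} \left\| u \right\|_{\underline{L}^2(Q_R)}$, claiming the factor $R^{d/2}$ can be absorbed into $C$. It cannot: absorption would require $R^{d/2} \leq (C'/C)^m$, which fails whenever $m$ is small compared to $\log R$ (take $m=1$, $d\geq 3$, $R\to\infty$: the claim would force $R^{d/2}\leq C'/C$). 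Running the iteration in normalized norms does not help either: the volume ratios $(r_{k+1}/r_k)^{d/2}$ are individually bounded, as you note, but their product telescopes to $(r_m/r_0)^{d/2}\approx R^{d/2}$, so the same loss reappears. The loss is structural: an argument using only Jensen and Caccioppoli between nested cubes compares $L^2$ masses, and there is no way to bound the unit-cube $L^2$ norm of a general $H^1$ function by a constant times its volume-averaged $L^2$ norm over $Q_R$; one must use the equation again at this point.

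The missing ingredient is a single interior $L^\infty$--$\underline{L}^2$ estimate (De Giorgi--Nash), or a substitute such as the discrete Sobolev/small-scale Poincar\'e route indicated in Remark~\ref{r.pick.psi}'s neighborhood (the remark following Lemma~\ref{l.jumpdown}). The paper's proof pays the passage from the unit cube to the large cube exactly once, in the base case $m=1$: since $Du$ is a solution, $\left\| Du \right\|_{L^2(Q_1)} \leq \left\| Du \right\|_{L^\infty(Q_{R/2})} \leq C \left\| Du \right\|_{\underline{L}^2(Q_{3R/4})}$, which costs only a constant rather than $R^{d/2}$, and then combines with one Caccioppoli application at scale comparable to $R$. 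The induction step for general $m$ applies the induction hypothesis to $Du$ on a cube of side $(1-\tfrac{2}{m+2})R$ and one more Caccioppoli inequality between cubes of comparable size, so no volume factors ever accumulate. If you insert such an $L^\infty$--$\underline{L}^2$ step (applied to $D^m u$, or to $Du$ within an induction as in the paper) your scheme goes through; your treatment of the residual regime $m+2\leq R< C_0 m$ by crude expansion is fine, since there $R^{d/2}\leq C^m$ is genuinely absorbable.
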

\begin{proof}
The argument is essentially identical to the one for harmonic functions, as presented for instance in~\cite[Section 2.2.3.c]{Evans}. We first prove the case $m=1$. 
Periodicity implies that~$Du$ is also a solution of~\eqref{e.pdediffs}, and therefore we may apply 
the Jensen and Caccioppoli inequalities to get, for $R>2$ and $r \in [1, R-2)$, 
\begin{equation} \label{e.caccforDu}
\left\| Du \right\|_{\underline{L}^2(Q_{r})}
\leq
\left\| \nabla u \right\|_{\underline{L}^2(Q_{r+2})}
\leq
\frac{C}{R-r-2}
\left\| u \right\|_{\underline{L}^2(Q_{R})},
\end{equation}
and then the De Giorgi-Nash $L^\infty$--$L^2$ estimate to obtain
\begin{equation*}
\left\| Du \right\|_{L^2(Q_{1})}
\leq
\left\| Du \right\|_{L^\infty(Q_{R/2})}
\leq
C \left\| Du \right\|_{\underline{L}^2(Q_{3R/4})} \leq  \frac{C}{R}
\left\| u \right\|_{\underline{L}^2(Q_{R})}.
\end{equation*} 
This gives the result in the case $m=1$. 

\smallskip

We argue by induction to obtain the result for general~$m$. Without loss of generality, we may assume that $R \geq 4m$ since otherwise the result follows simply by the triangle inequality. Assuming the statement is true for~$m$ with constant $C_0$ and again using the fact that $Du$ is a solution, we apply the Caccioppoli inequality~\eqref{e.caccforDu} to obtain, for every~$R \geq 4m$, 
\begin{align*}
\left\| D^{m+1} u \right\|_{L^2(Q_{1})}
&
\leq 
\left( \frac{C_0 (m+2)}{R} \right)^m
\left\| D u \right\|_{\underline{L}^2(Q_{(1-\frac{2}{m+2})R})}
\\ &
\leq 
\left( \frac{C_0(m+2)}{R} \right)^m
\frac{C(m+2)}{R} \left\| u \right\|_{\underline{L}^2(Q_{R})}
\\ & 
\leq 
\left( \frac{C^{\frac1{m+1}} C_0^{\frac{m}{m+1}} (m+1)}{R} \right)^{m+1}
\left\| u \right\|_{\underline{L}^2(Q_{R})}.
\end{align*}
If $C_0 \geq C$ is sufficiently large, we obtain the statement for $m+1$. 
\end{proof}

We next present a simple but useful lemma which estimates the $L^2$ norm of a function in a cube~$Q_r$ by the values of $\hat{u}$ on the lattice $Q_r\cap \Zd$ and the $L^2$ norm of its gradient. This ``small-scale Poincar\'e inequality'' is the basic tool we use to control the high frequencies of the solutions. 

\begin{lemma}
\label{l.jumpdown}
There exists $C(d)<\infty$ such that, for every~$r\in\N$ with $r\geq 1$ and~$u\in H^1(Q_r)$,
\begin{equation}
\label{e.jumpdown}
\left\| u \right\|_{\underline{L}^2(Q_r)} 
\leq 
\left( \frac{1}{|Q_r|} \sum_{z\in \Zd\cap Q_r} 
\left| \hat{u}(z) \right|^2 \right)^{\frac12}
+
C 
\left\| \nabla u\right\|_{\underline{L}^2(Q_r)}.
\end{equation}
\end{lemma}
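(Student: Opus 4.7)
The plan is to split $u$ into the piecewise-constant lattice approximation built from the cell averages $\hat{u}(z)$ and the residual, then bound the residual by the standard Poincar\'e inequality applied separately on each unit cube. Crucially, $\hat{u}(z)$ is by definition the mean of $u$ over $z+Q_1$, so the residual has zero mean on each unit cube of the decomposition, which is precisely what is needed for Poincar\'e.

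Concretely, (assume first that $r$ is odd, so that the unit cubes $\{z+Q_1 : z\in\Zd \cap Q_r\}$ partition $Q_r$ up to a null set), I define $\bar{u} : Q_r \to \R$ by $\bar{u} \equiv \hat{u}(z)$ on $z+Q_1$ for each $z\in\Zd\cap Q_r$. The triangle inequality gives
\begin{equation*}
\|u\|_{\underline{L}^2(Q_r)} \leq \|\bar{u}\|_{\underline{L}^2(Q_r)} + \|u-\bar{u}\|_{\underline{L}^2(Q_r)}.
\end{equation*}
Since $|Q_1|=1$ and the unit cubes are disjoint, the first term computes directly as
\begin{equation*}
\|\bar{u}\|_{\underline{L}^2(Q_r)}^2 = \frac{1}{|Q_r|} \sum_{z\in\Zd\cap Q_r} |\hat{u}(z)|^2,
\end{equation*}
which is exactly the first term on the right-hand side of~\eqref{e.jumpdown}.

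For the residual, I apply the classical Poincar\'e inequality on each unit cube $z+Q_1$, using that $\hat{u}(z)$ is the mean of $u$ there: this yields $\|u-\hat{u}(z)\|_{L^2(z+Q_1)}^2 \leq C\|\nabla u\|_{L^2(z+Q_1)}^2$ with $C=C(d)$. Summing over $z\in\Zd\cap Q_r$ and dividing by $|Q_r|$ gives $\|u-\bar{u}\|_{\underline{L}^2(Q_r)} \leq C\|\nabla u\|_{\underline{L}^2(Q_r)}$, which combined with the previous display is the claim.

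The only minor obstacle is notational: when $r$ is an even integer, the cubes $\{z+Q_1 : z\in\Zd\cap Q_r\}$ leave a thin boundary strip of $Q_r$ uncovered. One handles this either by absorbing the strip via a Poincar\'e estimate on a slightly larger collection of cubes (all at the cost of enlarging the dimensional constant $C$), or by noting that the uncovered region has measure $O(r^{d-1})$, so discarding it loses only a lower-order factor. All substantive content is in the unit-cube Poincar\'e inequality; the remainder is bookkeeping.
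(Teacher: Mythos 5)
Your proof is correct and follows essentially the same route as the paper: decompose over the unit cubes $z+Q_1$, $z\in\Zd\cap Q_r$, and apply the triangle inequality together with the Poincar\'e inequality on each cube (using that $\hat u(z)$ is the mean of $u$ there). Your extra remark about the boundary strip when $r$ is even is a point the paper silently glosses over, and your fix by adjusting the constant is fine.
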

\begin{proof}
Using the triangle inequality and the Poincar\'e inequality, we compute
\begin{align}
\label{e.jumpdown.p}
\left\| u \right\|_{\underline{L}^2(Q_r)} 
& 
=
\left( \frac{1}{|Q_r|} \sum_{z\in \Zd\cap Q_r} 
\left\| u \right\|_{{L}^2(z+Q_1)}^2
\right)^{\frac12}
\\ & \notag
\leq 
\left( \frac{1}{|Q_r|} \sum_{z\in \Zd\cap Q_r} 
\left( \left| \hat{u}(z) \right|^2 
+
\left\| u - \hat{u}(z) \right\|_{{L}^2(z+Q_1)}^2\right)
\right)^{\frac12} 
\\ & \notag
\leq 
\left( \frac{1}{|Q_r|} \sum_{z\in \Zd\cap Q_r} 
\left|\hat{u}(z) \right|^2 
\right)^{\frac12} 
+
C \left( 
\frac{1}{|Q_r|} \sum_{z\in \Zd\cap Q_r} 
\left\| \nabla u \right\|_{{L}^2(z+Q_1)}^2
\right)^{\frac12} 
\\ & \notag
= 
\left( \frac{1}{|Q_r|} \sum_{z\in \Zd\cap Q_r} 
\left| \hat{u}(z) \right|^2 
\right)^{\frac12} 
+
C
\left\| \nabla u \right\|_{\underline{L}^2(Q_r)}.
\qedhere
\end{align}
\end{proof}

\begin{remark}
While the De Giorgi-Nash estimate was used in the proof of Lemma~\ref{l.diffbound}, one can avoid this (and thus have an argument which works for systems) by instead relying on Lemma~\ref{l.jumpdown}. One proceeds by first iterating~\eqref{e.caccforDu} many times (depending on $d$) and then applying a version of the Sobolev inequality on~$\Zd$ to get a uniform  estimate on~$D\hat{u}$. Then Lemma~\ref{l.jumpdown} can be invoked to take care of the small scales. 
\end{remark}

For expository purposes, and as a warm-up to the proof of Theorem~\ref{t.analyticity}, we next give a simple new proof of the large-scale~$C^{0,1}$ and $C^{1,1}$ estimates of Avellaneda-Lin~\cite{AL1}. All previous proofs rely on $\ahom$--harmonic approximation via homogenization, either via a compactness argument~\cite{AL1} or a quantitative homogenization argument~\cite{AS}. Here we give a more direct and elementary argument which makes no (explicit) use of homogenization or $\ahom$--harmonic approximation, and yet is completely quantitative. The proof uses only the previous two lemmas and the existence of first-order correctors. 

\begin{lemma}[Large-scale $C^{0,1}$ and $C^{1,1}$ estimates]
\label{l.C01C11}
There exists $C(d,\Lambda)<\infty$ such that, for every~$R\in [2,\infty)$ and solution $u\in H^1(B_R)$ of the equation
\begin{equation}
\label{e.pde3}
-\nabla \cdot \a\nabla u = 0\quad \mbox{in} \ Q_R, 
\end{equation}
we have, for every $r\in [1,R]$
\begin{equation}
\label{e.C01}
\left\| u - \hat{u}(0) \right\|_{\underline{L}^2(Q_r)}
\leq 
\frac{Cr}{R} \left\| u \right\|_{\underline{L}^2(Q_R)}
\end{equation}
and, with $\psi \in \A_{1}$ such that $\hat{\psi}(0) = \hat{u}(0)$ and $D\hat \psi(0) = D\hat{u}(0)$,  
\begin{equation}
\label{e.C11}
\left\| u - \psi \right\|_{\underline{L}^2(Q_r)}
\leq 
\frac{Cr^2}{R^2} \left\| u \right\|_{\underline{L}^2(Q_R)}.
\end{equation}
\end{lemma}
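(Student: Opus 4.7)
The plan is to deduce both estimates by the same scheme: combine the lattice regularity from Lemma~\ref{l.diffbound} with Lemma~\ref{l.jumpdown}, and close the resulting Campanato-type recursion by a dyadic bootstrap driven by Caccioppoli. For~\eqref{e.C11}, one first normalizes $u$ by subtracting a heterogeneous affine function $\psi$, reducing the argument to the $m=0$ case for $v := u-\psi$, which is again $\a$-harmonic.

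\textbf{Proof of~\eqref{e.C01}.} By $\Zd$-periodicity of $\a$, for every $z\in\Zd$ with $|z|_\infty \leq R/4$ the shift $u(\cdot+z)$ is also a solution of~\eqref{e.pde3} on a cube of side comparable to $R$, so Lemma~\ref{l.diffbound} at $m=1$ gives $|D\hat u(z)| \leq \|Du\|_{L^2(z+Q_1)} \leq CR^{-1}\|u\|_{\underline{L}^2(Q_R)}$. Telescoping $\hat u(z)-\hat u(0)$ along an axis-parallel lattice path of length $O(r)$ yields $|\hat u(z)-\hat u(0)| \leq CrR^{-1}\|u\|_{\underline{L}^2(Q_R)}$ for every $z\in\Zd\cap Q_r$, $r \leq R/C$. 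Applying Lemma~\ref{l.jumpdown} to $u-\hat u(0)$ on the smallest integer cube containing $Q_r$, and bounding $\|\nabla u\|_{\underline{L}^2(Q_r)} \leq (C/r)\|u-\hat u(0)\|_{\underline{L}^2(Q_{2r})}$ via Caccioppoli on $Q_r\subset Q_{2r}$ (valid since $u-\hat u(0)$ solves~\eqref{e.pde3}), produces the recursion
\begin{equation*}
f(r) \leq \frac{Cr}{R}\|u\|_{\underline{L}^2(Q_R)} + \frac{C}{r}f(2r), \qquad f(r) := \|u-\hat u(0)\|_{\underline{L}^2(Q_r)},
\end{equation*}
valid for $1 \leq 2r \leq R/C$. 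From the crude base estimate $f(R)\leq C\|u\|_{\underline{L}^2(Q_R)}$ (using $|\hat u(0)|\leq C\|u\|_{\underline{L}^2(Q_R)}$ from De Giorgi--Nash--Moser), the dyadic iteration closes for $r \geq r_0(d,\Lambda)$ to give $f(r) \leq CrR^{-1}\|u\|_{\underline{L}^2(Q_R)}$ at integer scales, and hence for all $r\in[1,R]$ by monotonicity and triangle inequality at small scales.

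\textbf{Proof of~\eqref{e.C11}.} The argument is identical one order up. By Lemma~\ref{l.corr.growth} there is a unique $\psi\in\A_1$ with $\hat\psi(0)=\hat u(0)$ and $D\hat\psi(0)=D\hat u(0)$; since the corresponding $q\in\P_1$ is affine, $\mathscr{A}q=0$, and so by Lemma~\ref{l.correctorsexist} $\psi$ is $\a$-harmonic, making $v:=u-\psi$ a solution with $\hat v(0)=0$ and $D\hat v(0)=0$. Lemma~\ref{l.diffbound} at $m=2$ now gives $|D^2\hat v(z)| \leq CR^{-2}\|v\|_{\underline{L}^2(Q_R)}$, and a discrete second-order Taylor expansion using $\hat v(0)=D\hat v(0)=0$ converts this to $|\hat v(z)| \leq Cr^2R^{-2}\|v\|_{\underline{L}^2(Q_R)}$ on $\Zd\cap Q_r$. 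Feeding this into Lemma~\ref{l.jumpdown} and Caccioppoli as before yields
\begin{equation*}
F(r) \leq \frac{Cr^2}{R^2}\|v\|_{\underline{L}^2(Q_R)} + \frac{C}{r}F(2r), \qquad F(r) := \|v\|_{\underline{L}^2(Q_r)};
\end{equation*}
iterating the quantity $F(r)/r^2$ gives $\|v\|_{\underline{L}^2(Q_r)} \leq Cr^2R^{-2}\|v\|_{\underline{L}^2(Q_R)}$. Finally, $\|v\|_{\underline{L}^2(Q_R)} \leq C\|u\|_{\underline{L}^2(Q_R)}$ follows from~\eqref{e.psi.easybound} combined with $|\hat u(0)| \leq C\|u\|_{\underline{L}^2(Q_R)}$ and $|D\hat u(0)| \leq CR^{-1}\|u\|_{\underline{L}^2(Q_R)}$ from Lemma~\ref{l.diffbound}.

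\textbf{Main obstacle.} The delicate step is closing the dyadic bootstrap cleanly: the Caccioppoli factor $C/r$ is only contractive for $r \geq r_0(d,\Lambda)$, which forces a separate unit-scale argument on $[1,r_0]$ handled by the triangle inequality and the crude bound $\|u-\hat u(0)\|_{\underline{L}^2(Q_{r_0})} \leq C\|u\|_{\underline{L}^2(Q_R)}$. One must also check that the terminal residual $2^{-k}f(2^k r)$ at the end of the iteration (with $2^k r \simeq R$) contributes only $O(r/R)$ (respectively $O(r^2/R^2)$), which is exactly what the crude base bound $f(R) \leq C\|u\|_{\underline{L}^2(Q_R)}$ (respectively $F(R)\leq C\|u\|_{\underline{L}^2(Q_R)}$) provides.
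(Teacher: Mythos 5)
Your proposal is correct and follows essentially the same route as the paper: lattice regularity from Lemma~\ref{l.diffbound}, the small-scale Poincar\'e inequality of Lemma~\ref{l.jumpdown}, and a Caccioppoli-based absorption/iteration, with the reduction to $v=u-\psi$ for an $\a$-harmonic $\psi\in\A_1$ in the $C^{1,1}$ case exactly as in the paper; the only difference is that you iterate dyadically with contraction factor $C/r$ whereas the paper fattens additively $r\mapsto r+s$ with a fixed large $s(d,\Lambda)$ to produce the factor $\tfrac12$, which is immaterial. One small wording point: on the range $r\in[1,r_0]$ the desired estimate follows from the already-established bound at scale $r_0$ together with volume comparison (as the ``monotonicity'' in your main text correctly indicates), not from the crude bound $\left\| u-\hat u(0)\right\|_{\underline{L}^2(Q_{r_0})}\le C\left\| u\right\|_{\underline{L}^2(Q_R)}$ invoked in your closing paragraph, which by itself would miss the factor $r/R$.
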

\begin{proof}
Assume that~$R\geq 4$ and~$u\in H^1(B_R)$ is a solution of~\eqref{e.pde3}. 

\smallskip

\emph{Step 1.} The proof of~\eqref{e.C01}. 
By~\eqref{e.diffbound} for $m=1$, 
\begin{align}
\label{e.applydiffbounds}
\left\| Du \right\|_{L^\infty(Q_{3R/4})}
\leq 
\frac{C}{R} \left\| u \right\|_{\underline{L}^2(Q_R)}. 
\end{align}
This implies that, for every $r\in [1,\tfrac12R]$, 
\begin{equation}
\sup_{z\in Q_r} 
\left| \hat{u}(z) - \hat{u}(0) \right|
\leq 
Cr \sup_{z\in Q_{r+1}} \left| D\hat{u}(z) \right| 
\leq
\frac{Cr}{R} \left\| u \right\|_{\underline{L}^2(Q_R)}.
\end{equation}
Combining this with~\eqref{e.jumpdown}, we obtain, for every~$r\in \left[1,\tfrac12 R\right] \cap \N$, 
\begin{align}
\label{e.iterready.0}
\left\| u - \hat{u}(0) \right\|_{\underline{L}^2(Q_r)}
\leq
\frac{Cr}{R} \left\| u \right\|_{\underline{L}^2(Q_R)}
+ 
C \left\| \nabla u \right\|_{\underline{L}^2(Q_r)}.
\end{align}
Testing~\eqref{e.pde3} with $\phi^2u$, where~$\phi\in C^\infty_c(\Rd)$ is chosen to satisfy $\indc_{Q_r} \leq \phi \leq \indc_{Q_{r+s}}$ and $\| \nabla \phi \|_{L^\infty(\Rd)} \leq Cs^{-1}$, we obtain, for every $r\in \left[1,\tfrac12 R \wedge (R-s)\right]$, 
\begin{equation}
\label{e.cacc.your.u}
\left\| \nabla u \right\|_{\underline{L}^2(Q_r)}^2
\leq
\frac{C}{s} 
\left\| u \right\|_{\underline{L}^2(Q_{r+s})}^2.
\end{equation}
Requiring $s=s(d,\Lambda)<\infty$ to be large enough that $Cs^{-1}\leq \frac12$ and combining the result with~\eqref{e.iterready.0}, we obtain, for every $r\in \left[1,\tfrac12 R \wedge(R-s)\right]$, 
\begin{equation}
\label{e.iterready}
\left\| u - \hat{u}(0) \right\|_{\underline{L}^2(Q_r)}
\leq 
\frac{Cr}{R} \left\| u \right\|_{\underline{L}^2(Q_R)}
+
\frac12 \left\| u - \hat{u}(0) \right\|_{\underline{L}^2(Q_{r+s})}.
\end{equation}
An iteration of~\eqref{e.iterready} implies that~\eqref{e.C01} is valid for every $r\in \left[1,\frac12 R \wedge (R-s)\right]$, which also requires that $R \geq s+1$. Since $s\leq C$, 
these restrictions may be removed by enlarging the constant~$C$ in~\eqref{e.C01}. 

\smallskip

\emph{Step 2.} The proof of~\eqref{e.C11}. 
We suppose that~$R\geq 4+s$.  
Applying~\eqref{e.diffbound} for $m=2$, we have, for every~$r\in\left[1,\tfrac12 R\right]$,
\begin{align}
\label{e.applydiffbounds.11}
\left\| D^2u \right\|_{L^\infty(Q_{3R/4})}
\leq 
\frac{C}{R^2} \left\| u \right\|_{\underline{L}^2(Q_R)}. 
\end{align}
By~$(\hat{u}-\hat{\psi})(0)=0$, $D(\hat{u}-\hat{\psi})(0)=0$ and $D^2\psi = 0$, for every $r\in [1,R-2]$,  
\begin{equation}
\sup_{z\in Q_r} 
\left| \hat{u}(z)-\hat{\psi}(z) \right|
\leq 
Cr^2 \sup_{z\in Q_{r+2}} \left| D^2\hat{u}(z) \right| 
\leq
\frac{Cr^2}{R^2} \left\| u  \right\|_{\underline{L}^2(Q_R)}.
\end{equation}
Applying~\eqref{e.jumpdown} and using~\eqref{e.cacc.your.u} again (with $u-\psi$ in place of $u$), we deduce that, for every $r \in [1,R-2-s]$, 
\begin{align}
\left\| u -\psi \right\|_{\underline{L}^2(Q_r)} 
&
\leq
\frac{Cr^2}{R^2} \left\| u \right\|_{\underline{L}^2(Q_R)}
+
\frac12 \left\| u -\psi \right\|_{\underline{L}^2(Q_{r+s})}.
\end{align}
An iteration of this inequality implies, for every $r \in [1,R-2-s]$, 
\begin{equation}
\label{e.C11a}
\left\| u -\psi \right\|_{\underline{L}^2(Q_r)} 
\leq 
\frac{Cr^2}{R^2} 
\left\| u \right\|_{\underline{L}^2(Q_R)}.
\end{equation}
The conditions~$r \leq R-2-s$ and~$R\geq s+2$ can be removed by enlarging the constant~$C$ on the right side of~\eqref{e.C11a}. This completes the proof. 
\end{proof}

%
%
%
%
%
%
%
%
%
%

The proof of Theorem~\ref{t.analyticity} for $m\geq 2$ follows along the same lines as the one of Lemma~\ref{l.C01C11}, but is more involved. We take a heterogeneous polynomial~$\psi\in \A_m$ satisfying $D^k \hat{\psi} (0) = D^k \hat{u}(0)$ for every $k\in\{0,\ldots,m\}$, and we endeavor to show that~\eqref{e.Cm1} holds. Compared to the case $m\in\{0,1\}$, the additional difficulty in the case $m\geq 2$ is that the polynomial $p:= -\nabla \cdot \a\nabla \psi$ does not vanish, in general, and must be estimated. This turns out to be the most difficult part of the argument and is the content of Lemma~\ref{l.gaussian.cacc}, below.

\smallskip

Dealing with this difficulty forces us to make some modifications to our strategy of a more technical nature. In particular, we need to work with heat kernels rather than balls or cubes and use some properties of Hermite polynomials. We denote
\begin{equation*} 
	 \Phi_{t,y}(x) :=  (4 \pi t)^{-\frac d2} \exp \left( - \frac{|x-y|^2}{4t} \right).
\end{equation*}
We begin the proof of Theorem~\ref{t.analyticity} for~$m \geq 2$ by presenting a variant of Lemma~\ref{l.jumpdown} for heat kernels (rather than cubes). 

\begin{lemma}
\label{l.jumpdown2}
Let $r \in \N$ and $u \in H^1(Q_r)$. There exists a constant $C(d)<\infty$ such that, for every $y \in \R^d$ and $t \in [1,\infty)$, 
\begin{equation}
\label{e.jumpdown2}
\int_{Q_r} u^2 \Phi_{t,y}  
\leq 
2 \int_{Q_r}  \hat{u}^2  \Phi_{t,y}  
+
C  \int_{Q_r} \left| \nabla u \right|^2 \Phi_{t + 1 ,y} 
\end{equation}
\end{lemma}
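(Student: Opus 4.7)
The plan is to adapt the proof of Lemma~\ref{l.jumpdown} cube-by-cube, inserting the Gaussian weight and comparing $\Phi_{t,y}$ to $\Phi_{t+1,y}$ across each unit cube.

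First, I would decompose $Q_r = \bigcup_z (z + Q_1)$ as a disjoint (up to measure zero) union of unit cubes indexed by the appropriate shifted lattice. On each such cube $\hat u$ takes the constant value $\hat u(z)$, so applying $u^2 \leq 2 \hat u(z)^2 + 2 (u - \hat u(z))^2$ and integrating against $\Phi_{t,y}$ gives
\[
\int_{z+Q_1} u^2 \Phi_{t,y} \leq 2 \int_{z+Q_1} \hat u^2 \Phi_{t,y} + 2 \int_{z+Q_1} (u - \hat u(z))^2 \Phi_{t,y}.
\]
The first term is already of the desired form. For the second, the Poincar\'e inequality on the unit cube gives $\int_{z+Q_1}(u - \hat u(z))^2 \leq C \int_{z+Q_1} |\nabla u|^2$, so pulling out a pointwise bound on the weight yields $\int_{z+Q_1} (u - \hat u(z))^2 \Phi_{t,y} \leq C \sup_{z+Q_1} \Phi_{t,y} \cdot \int_{z+Q_1} |\nabla u|^2$.

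The main step is the pointwise Gaussian comparison: for $t \geq 1$ and any $x, x' \in z+Q_1$,
\[
\Phi_{t,y}(x) \leq C(d)\, \Phi_{t+1,y}(x').
\]
This is a direct calculation. The prefactor ratio satisfies $((t+1)/t)^{d/2} \leq 2^{d/2}$, and writing $a = |x-y|$, $b = |x'-y|$, the estimate $b \leq a + \sqrt d$ (since $\mathrm{diam}(Q_1) = \sqrt d$) together with completing the square in $a$ yields
\[
-\frac{a^2}{4t} + \frac{b^2}{4(t+1)} \leq -\frac{a^2}{4t(t+1)} + \frac{a \sqrt d}{2(t+1)} + \frac{d}{4(t+1)} \leq \frac{d}{2}.
\]
This comparison is exactly what forces the shift from $t$ to $t+1$ on the right-hand side, and is the only place the hypothesis $t \geq 1$ enters essentially.

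Combining, $\sup_{z+Q_1} \Phi_{t,y} \leq C(d) \inf_{z+Q_1} \Phi_{t+1,y}$, so $\int_{z+Q_1} (u-\hat u(z))^2 \Phi_{t,y} \leq C \int_{z+Q_1} |\nabla u|^2 \Phi_{t+1,y}$. Summing over $z$ yields \eqref{e.jumpdown2}. The only nontrivial ingredient is the Gaussian comparison; once it is in hand the argument is a direct weighted transcription of Lemma~\ref{l.jumpdown}.
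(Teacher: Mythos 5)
Your proposal is correct and follows essentially the same route as the paper: a cube-by-cube Poincar\'e inequality combined with the pointwise comparison $\Phi_{t,y}(x)\leq C(d)\,\Phi_{t+1,y}(x')$ for $|x-x'|\leq\sqrt d$ and $t\geq 1$, then summing over the unit cubes. The only cosmetic difference is that you use the pointwise bound $u^2\leq 2\hat u^2+2(u-\hat u)^2$ where the paper invokes the triangle inequality in the weighted $L^2$ norm, which yields the same factor of $2$.
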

\begin{proof}
Observe that, for every $t \geq 1$, $x,x',y\in\Rd$ with $|x-x'| \leq \sqrt{d}$, we have 
\begin{align*}
\frac{\Phi_{t,y} (x')}{\Phi_{t+1,y}(x)}
=
\exp\left( -\frac{|x'-y|^2}{4t(t+1)}
+ \frac{(x-x')\cdot(x+x'-2y)}{4(t+1)}
 \right) \leq C. 
\end{align*}
Thus, by the Poincar\'e inequality, 
\begin{align*}
\int_{z + Q_1} \left| u(x) - \hat u(z)  \right|^2 \Phi_{t,y}(x)\,dx
&
\leq 
C \sup_{x\in z+Q_1} \Phi_{t,y}(x) 
\int_{z + Q_1} \left| \nabla u \right|^2
\\ & 
\leq 
C\int_{z + Q_1} \left| \nabla u(x) \right|^2 \Phi_{t+1,y} (x)\,dx.
\end{align*}
Summing over $z\in \Zd\cap Q_r$ yields
\begin{align*}
\int_{Q_r} \left| u(x) - \hat u(z)  \right|^2 \Phi_{t,y}(x)\,dx
\leq 
C\int_{Q_r} \left| \nabla u(x) \right|^2 \Phi_{t+1,y} (x)\,dx
\end{align*}
Using the triangle inequality, we get~\eqref{e.jumpdown2}.
\end{proof}

The next lemma gives two simple estimates for polynomials which are needed in the proof of Lemma~\ref{l.gaussian.cacc}. 

\begin{lemma}[Polynomial estimates] \label{l.polynomial}
For every $y \in \Rd$, $t \in (0,\infty)$, $m\in \N$ and polynomial $p \in \mathbb{P}_m$, we have
\begin{equation}
\label{e.polystupid1}
\int_{\R^d} \left( \frac{|\nabla ( p  \Phi_{t,y})|}{ \Phi_{t,y}}  \right)^2  \Phi_{t,y} \leq \frac{2d(m+1)}{t} \int_{\R^d} p^2  \Phi_{t,y}  
\end{equation}
and
\begin{equation}  \label{e.polystupid2}
	\int_{\R^d} p^2  \Phi_{t,y}  \leq 2 \int_{ Q_{ 4 \sqrt{m t} }(y)}  p^2  \Phi_{t,y} .
\end{equation}
\end{lemma}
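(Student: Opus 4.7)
The plan is to prove both estimates by a direct Gaussian integration-by-parts computation combined with the spectral theory of the Ornstein--Uhlenbeck operator.

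For the first estimate, I would use $\nabla \Phi_{t,y} = -\tfrac{x-y}{2t}\Phi_{t,y}$ to compute
\[
\frac{\nabla(p\Phi_{t,y})}{\Phi_{t,y}} = \nabla p - \frac{x-y}{2t}\,p,
\]
and then expand the square. The cross term equals $-\int \nabla(p^2) \cdot \tfrac{x-y}{2t}\,\Phi_{t,y}$; integrating by parts and using $\nabla\!\cdot\!\bigl(\tfrac{x-y}{2t}\Phi_{t,y}\bigr) = \bigl(\tfrac{d}{2t}-\tfrac{|x-y|^2}{4t^2}\bigr)\Phi_{t,y}$, the $|x-y|^2$-pieces cancel and one is left with the clean identity
\[
\int_{\Rd}\biggl|\nabla p-\frac{x-y}{2t}p\biggr|^2\Phi_{t,y} = \int_{\Rd}|\nabla p|^2\Phi_{t,y} + \frac{d}{2t}\int_{\Rd}p^2\Phi_{t,y}.
\]
To bound the first term on the right, I would translate by $y$ and rescale $x\mapsto\sqrt{2t}\,x$ to reduce to the standard Gaussian density $\phi(x)=(2\pi)^{-d/2}e^{-|x|^2/2}$. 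The claim then becomes the spectral Poincar\'e inequality $\int|\nabla p|^2\phi \leq m\int p^2\phi$ for polynomials of degree $\leq m$, which follows at once from the identity $\int pNp\,\phi = \int|\nabla p|^2\phi$ (one integration by parts) together with the fact that the Ornstein--Uhlenbeck operator $N = -\Delta + x\cdot\nabla$ has the multi-variate Hermite polynomials $H_\alpha$ as eigenfunctions with eigenvalues $|\alpha|$. The total bound $(m+d)/(2t)$ then fits comfortably inside $2d(m+1)/t$.

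For the second estimate, after translating and rescaling I may assume $y=0$ and $t=1$, and it suffices to show $\int_{Q_{4\sqrt m}^c} p^2 \Phi_1 \leq \tfrac12 \int p^2\Phi_1$. Writing $Q_{4\sqrt m}^c \subseteq \bigcup_{i=1}^d\{|x_i|>2\sqrt m\}$, a union bound reduces the task, for each coordinate $i$, to $\int_{|x_i|>2\sqrt m}p^2\Phi_1 \leq (2d)^{-1}\int p^2 \Phi_1$. Integrating out the remaining $d-1$ variables turns this into a one-dimensional tail estimate for the nonnegative univariate polynomial $g(\xi) := \int_{\R^{d-1}}p^2\,\tilde\phi^{\otimes(d-1)}$ of degree $\leq 2m$. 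I would then combine Chebyshev's inequality $\int_{|\xi|>R}g\phi \leq R^{-2k}\int \xi^{2k}g\phi$ with a moment bound of the form $\int \xi^{2k}g\phi \leq C^k(m+k)^k \int g\phi$, proved by iterating the Hermite three-term recurrence $\xi H_n = H_{n+1} + nH_{n-1}$. Choosing $k\asymp m$ and $R = 2\sqrt m$ yields the claim.

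The main technical obstacle is the second estimate, specifically tracking constants carefully enough through the moment bound to realize the explicit factor $4$ (rather than some larger universal constant) on $\sqrt{mt}$. A crude Nelson hypercontractivity argument would lose a factor of order $3^{m/2}$ and produce a worse prefactor; using the Hermite recurrence directly is what allows the tight constant. The first estimate, by contrast, is essentially a one-line integration-by-parts identity together with the standard Gaussian Poincar\'e inequality on the polynomial subspace.
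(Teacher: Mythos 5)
Your argument for \eqref{e.polystupid1} is correct and takes a different route from the paper: you use the pointwise identity $\nabla(p\Phi_{t,y})/\Phi_{t,y}=\nabla p-\tfrac{x-y}{2t}p$, an integration by parts to kill the cross term, and the Gaussian spectral (Ornstein--Uhlenbeck) inequality $\int|\nabla q|^2\phi\le m\int q^2\phi$ on polynomials of degree $\le m$; this yields the bound $\tfrac{m+d}{2t}\int p^2\Phi_{t,y}$, which indeed sits inside $\tfrac{2d(m+1)}{t}$ and is in fact sharper than what the paper gets. The paper instead reduces to $d=1$ by Fubini and uses the identity $(h_n\Phi)'=-h_{n+1}\Phi$ together with Hermite orthogonality; both arguments are fine, and yours is arguably cleaner and dimensionally more efficient.

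For \eqref{e.polystupid2} there is a genuine gap: the Chebyshev-plus-moment step cannot reach the radius $2\sqrt{mt}$, and the claim that careful constant tracking through the three-term recurrence ``realizes the explicit factor $4$'' is precisely the step that fails. In the relevant normalization the one-dimensional Gaussian factor of $\Phi_{t,y}$ has variance $2t$, and the recurrence (equivalently, the Jacobi matrix with off-diagonal entries $\sim\sqrt{j}$) gives, essentially sharply, $\int \xi^{2k} g\,\Phi \le (8t)^k(m+k)^k\int g\,\Phi$ for nonnegative $g$ of degree $\le 2m$; at the threshold $R=2\sqrt{mt}$ the Chebyshev quotient is $\bigl(2(m+k)/m\bigr)^k\ge 2^k$, which is useless for every choice of $k$. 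This is not a matter of sloppy constants: the extremal $h_m^2\Phi$ places roughly half of its mass outside $\{|x-y|\le 2\sqrt{mt}\}$, and for $m=1$, $d=1$, $p(x)=x-y$ one computes $\int_{\R}p^2\Phi_{t,y}=2t$ while $2\int_{Q_{4\sqrt{t}}(y)}p^2\Phi_{t,y}\approx 1.71\,t$, so the inequality with the literal factor $4$ cannot be proved by any method. Your scheme does work once the half-side exceeds $2\sqrt{2(1+\ep)mt}$, with the tail then exponentially small in $m$; note also that your union bound requires each coordinate tail to be at most $\tfrac1{2d}\int p^2\Phi_{t,y}$, whereas the bound you produce is $e^{-cm}\int p^2\Phi_{t,y}$ uniformly in $d$, so the dimension-free factor $2$ is not reached for fixed $m$ and large $d$ without enlarging the cube by a $\sqrt{\log d}$ factor or accepting a $d$-dependent constant. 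For perspective, the paper's own proof (the pointwise bound $|h_k(x)|\le 4^k|x|^k$ for $|x|\ge\sqrt{k}$ plus orthogonality) only controls the tail beyond $4\sqrt{m}$ after rescaling to $t=\tfrac14$, i.e.\ effectively proves the statement with $Q_{16\sqrt{mt}}$, and a version with $Q_{C\sqrt{mt}}$ for any universal $C$ is all that the application in Lemma~\ref{l.gaussian.cacc} requires; so your strategy is salvageable after relaxing the constant, but as written the key quantitative claim does not hold.
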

\begin{proof}
It suffices by Fubini to prove the result in dimension~$d=1$. By scaling and translation we may assume that $t = 1/4$ and $y= 0$. We denote $\Phi(x) := \Phi_{1/4,0}(x) = \pi^{-\frac12} \exp(-x^2)$ and represent~$p$ using Hermite polynomials as
\begin{equation*} 
p(x) 
= 
\sum_{k = 0}^m c_k h_k(x),
\end{equation*}
where $h_k$ is the $k$th (physicist) Hermite polynomial, defined by
\begin{equation}
h_k(x):= 
(-1)^k \exp\left(x^2\right) 
\left( \frac{d}{dx}\right)^n
\exp\left(-x^2\right).
\end{equation}
We note that these Hermite polynomials satisfy
\begin{equation}
\label{e.hk.one}
( h_n  \Phi)' 
= ( h_n' - 2 x h_n) \Phi 
= - h_{n+1} \Phi
\end{equation}
and
\begin{equation}
\label{e.hk.two}
\left\{ 
\begin{aligned}
& h_{n+1}(x)  = 2x h_n(x) - 2nh_{n-1}(x),
\\ & 
h_0 = 1, \ h_1(x) = 2x. 
\end{aligned}
\right. 
\end{equation}
They are orthogonal with respect to~$\Phi(x)\,dx$ and satisfy, for every $m,n\in\N$, 
\begin{equation}
\label{e.Hermite.orth}
\int_{\R} h_m(x) h_n(x) \Phi(x)\,dx 
=
\left\{
\begin{aligned}
& 2^m m! & \mbox{if} \ n=m, \\
& 0 & \mbox{if} \ n\neq m. 
\end{aligned}
\right. 
\end{equation}

\smallskip

Using~\eqref{e.hk.one} and~\eqref{e.Hermite.orth}, we see that
\begin{align*}
\int_{\R}
\left(
\frac{(p\Phi)'}{\Phi}\right)^2 \Phi  
&
=
\int_{\R}
\left( \sum_{k=0}^m c_k \frac{( h_k \Phi)'}{\Phi} \right)^2 \Phi
\\&
=
\sum_{k=0}^m c_k^2
\int_{\R} h_{k+1}^2 \Phi
\\&
= 
\sum_{k=0}^m c_k^2 2(k+1)
\int_{\R} h_{k}^2 \Phi
\leq
2(m+1) \int_{\R} p^2\Phi.
\end{align*}
This yields~\eqref{e.polystupid1}.

\smallskip

We turn to the proof of~\eqref{e.polystupid2}. We first observe that, for every $k\in\N$, 
\begin{equation}
\label{e.hk.tailbound}
\left| h_k(x) \right| \leq 4^k|x|^k \quad \forall |x| \geq k^{\frac12}.
\end{equation}
We can prove~\eqref{e.hk.tailbound} by induction and~\eqref{e.hk.two}: if it holds for $k\in \{n-1,n\} $, then 
\begin{align*}
\left| h_{n+1}(x) \right|
&
\leq 
2|x| \left| h_n(x) \right| + 2n \left| h_{n-1}(x) \right|
\\ & 
\leq 
2|x|\cdot 4^n |x|^n
+
2n\cdot 4^{n-1}|x|^{n-1}
\\ & 
\leq 
4^{n+1} |x|^{n+1}. 
\end{align*}
It clearly holds for $k\in\{0,1\}$, so it is valid for all $k$. We deduce from~\eqref{e.hk.tailbound} that, for fixed $\alpha\geq 1$ and every $k\in\{0,\ldots,m\}$, 
\begin{align*}
\int_{|x| > \sqrt{\alpha m}} h_k^2 \Phi 
\leq 
2^{4k+1} \int_{\sqrt{\alpha m}}^\infty x^{2k} \exp\left(-x^2\right) \,dx
\leq
2^{6k+2} k! \exp\left( -\tfrac12 \alpha m \right)
\end{align*}
Thus, using also~\eqref{e.Hermite.orth},
\begin{align*}
\int_{|x|>\sqrt{\alpha m}} p^2 \Phi
&
\leq
(m+1) 
\sum_{k=0}^m
c_k^2
\int_{|x|>\sqrt{\alpha m}} 
h_k^2 \Phi 
\\ &
\leq
(m+1) \exp\left( -\tfrac12 \alpha m \right)
\sum_{k=0}^m
c_k^2
2^{6k+2} k! 
\\ & 
=
(m+1) \exp\left( -\tfrac12 \alpha m \right)
\sum_{k=0}^m
c_k^2 2^{5k+2}
\int_{\R}
h_k^2 \Phi
\\ & 
\leq
(m+1)2^{5m+2} \exp\left( -\tfrac12 \alpha m \right)
\int_{\R} p^2\Phi .
\end{align*}
Taking $\alpha=16$ so that $(m+1)2^{5m+2} \exp\left( -\tfrac12 \alpha m \right) \leq \frac14$ for all $m\geq 1$, we then obtain~\eqref{e.polystupid2} by the triangle inequality.
\end{proof}

The next lemma contains the key additional step needed to adapt the argument of Lemma~\ref{l.C01C11} to~$m\geq 2$. The interesting point is that the polynomial~$p$ does not appear on the right side of the estimate~\eqref{e.gaussian.cacc}.

\begin{lemma} 
\label{l.gaussian.cacc}
For each $\delta\in \left(0,1\right]$, there exist
$C(\delta,d,\Lambda)<\infty$ and $c(d,\Lambda)>0$ such that, for every $m\in\N$, $R \in [Cm,\infty)$, $t\in [Cm,R]$, $y\in Q_{R/2}$, $p\in\mathbb{P}_m$ and solution~$u\in H^1(Q_R)$ of the equation
\begin{equation}
\label{e.pde.withp}
-\nabla \cdot \a \nabla u = p 
\quad \mbox{in} \ Q_R,
\end{equation}
we have the estimate 
\begin{align} 
\label{e.gaussian.cacc}
\int_{Q_{3R/4}}\left(
 \left| \nabla u  \right|^2
+
p^2
\right)  \Phi_{t,y}
& 
\leq  
\delta \int_{Q_{3R/4}} u^2 \Phi_{t+C,y} 
+
\exp\left( -c R \right) \int_{Q_{R}} u^2.
\end{align}
\end{lemma}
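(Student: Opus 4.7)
The plan is to derive two weighted Caccioppoli-type estimates in parallel. Let $\eta \in C^\infty_c(\Rd)$ be a cutoff with $\eta \equiv 1$ on $Q_{3R/4}$, $\eta \equiv 0$ outside $Q_{7R/8}$, and $|\nabla \eta| \leq C/R$, and write $A := \int |\nabla u|^2 \eta^2 \Phi_{t,y}$ and $B := \int p^2 \eta^2 \Phi_{t,y}$. Testing \eqref{e.pde.withp} against $u \eta^2 \Phi_{t,y}$ produces an estimate for $A$, and testing against $p \eta^2 \Phi_{t,y}$ produces one for $B$; both test functions are supported inside $Q_R$, so no boundary terms appear. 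The goal is to bound $A$ and $B$ in terms of each other and of $\int u^2 \eta^2 \Phi_{t+s,y}$, with a shift $s = s(\delta)$ that will become the constant $C$ in the conclusion, by small-constant Young's inequalities and absorption.

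The first estimate is standard. After using ellipticity and Young's inequality, the gradient-of-Gaussian contribution is controlled via the identity $|\nabla \Phi_{t,y}|^2/\Phi_{t,y} = \tfrac{|x-y|^2}{4t^2} \Phi_{t,y}$ combined with the elementary pointwise bound $\tfrac{|x-y|^2}{4t^2}\Phi_{t,y}(x) \leq (C(d)/s)\,\Phi_{t+s,y}(x)$ valid for $0 < s \leq t$. The resulting cutoff term $\int u^2 |\nabla \eta|^2 \Phi_{t,y}$ is supported where $|x-y| \geq R/8$, so Gaussian decay (using $t \leq R$) absorbs it directly into $\exp(-cR) \int_{Q_R} u^2$. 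What remains on the right is the cross term $\int pu\,\eta^2 \Phi_{t,y}$, to be handled via the $B$ estimate.

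The second estimate is where Lemma~\ref{l.polynomial} plays its role. Writing $B = \int \a \nabla u \cdot \nabla(p\eta^2 \Phi_{t,y})$ and applying Cauchy--Schwarz/Young, the first weight term that appears is $\tfrac{C}{\epsilon}\int \eta^2 |\nabla(p\Phi_{t,y})|^2/\Phi_{t,y}$. By Lemma~\ref{l.polynomial}(i) this is at most $\tfrac{C(d)m}{\epsilon t}\int_{\Rd} p^2 \Phi_{t,y}$, and by Lemma~\ref{l.polynomial}(ii)---whose hypothesis $Q_{4\sqrt{mt}}(y) \subseteq Q_{3R/4}$ follows from $t \leq R$ and $m \leq R/C$---this is bounded by $\tfrac{Cm}{\epsilon t} \cdot 2B$. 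Choosing $t \geq C(\delta) m$ absorbs this into $B/2$; this is exactly the lower-bound hypothesis $t \geq Cm$ of the statement.

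The main obstacle is the second weight term $\tfrac{C}{\epsilon}\int p^2 \Phi_{t,y} |\nabla \eta|^2$, since no direct bound is available for $\int_{Q_R} p^2$ in terms of $u$. I resolve it by applying Gaussian concentration a second time: since $|\nabla \eta|$ is supported at distance $\geq R/8$ from $y$, the Hermite tail estimate from the proof of Lemma~\ref{l.polynomial}(ii), with threshold parameter $\alpha m \sim R^2/t$, gives $\int_{|x-y|\geq R/8} p^2 \Phi_{t,y} \leq C^m \exp(-cR^2/t) \int_{\Rd} p^2 \Phi_{t,y}$. Since $m \leq R/C$ for $C$ large, $C^m \leq \exp(cR/2)$, so the entire second weight term is at most $\exp(-cR) B$, again absorbable into $B$ (for $R$ large; the degenerate $m=0$ case is trivial). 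This produces $B \leq O(\epsilon) A$. Substituting back into the $A$ estimate, splitting $\int pu$ by Young's inequality with a companion small parameter $\epsilon'$ and using $\Phi_{t,y} \leq C(d)\Phi_{t+s,y}$, then tuning $\epsilon,\epsilon' \sim \delta$ and $s \sim 1/\delta$ finishes the proof; $s$ plays the role of the additive constant in $\Phi_{t+C,y}$.
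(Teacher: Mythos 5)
Your proof is correct and follows the paper's skeleton for most of the argument: the same two Gaussian-weighted testings (against $u\eta^2\Phi_{t,y}$ and $p\eta^2\Phi_{t,y}$), the same shift trick bounding $|\nabla \Phi_{t,y}|^2/\Phi_{t,y}$ by $C(d)s^{-1}\Phi_{t+s,y}$ with $s\sim\delta^{-1}$ (which is where the additive constant in $\Phi_{t+C,y}$ comes from), and the same use of Lemma~\ref{l.polynomial} plus the hypothesis $t\geq C(\delta)m$ to absorb the $|\nabla(p\Phi_{t,y})|^2/\Phi_{t,y}$ weight into the Gaussian-weighted integral of $p^2$. The genuine difference is your treatment of the far-field term $\int p^2|\nabla\eta|^2\Phi_{t,y}$. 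The paper bounds all cutoff-gradient contributions crudely by $\exp(-R^2/(Ct))\int_{Q_R}(u^2+p^2)$ and then eliminates the $p^2$ by a separate, non-Gaussian pair of testings combined with polynomial inverse estimates, yielding $\|p\|_{L^\infty(Q_R)}^2\leq C^mR^{-4}\int_{Q_R}u^2$ (estimate~\eqref{e.poly.yomama}), after which $(1+C^m)\exp(-cR)\leq\exp(-cR/2)$ since $R\geq Cm$. You instead exploit that $\supp\nabla\eta$ lies at distance at least $R/8$ from $y$ and rerun the Hermite tail computation from the proof of Lemma~\ref{l.polynomial} with threshold $\alpha m\sim R^2/t$ in place of the fixed $\alpha=16$, getting $\int_{\{|x-y|\geq R/8\}}p^2\Phi_{t,y}\leq C^m\exp(-cR^2/t)\int_{\Rd}p^2\Phi_{t,y}$, which by part~\eqref{e.polystupid2} is at most $\exp(-cR/2)\cdot 2\int_{Q_{3R/4}}p^2\Phi_{t,y}$ and is absorbed directly. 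This is legitimate: the tail bound in that proof is derived for arbitrary $\alpha\geq1$, the condition $\alpha\geq 1$ follows from $t\leq R\leq R^2/(Cm)$, and the $d$-dimensional version follows by the same slab-by-slab Fubini reduction already used there; the payoff is that your lemma is self-contained and never needs a bound on $\int_{Q_R}p^2$ in terms of $u$, though the paper must prove \eqref{e.poly.yomama} anyway for later use in the proof of Theorem~\ref{t.analyticity}, so globally nothing is saved. One point to make explicit in a full write-up: the Young splitting of the cross term $\int pu\,\eta^2\Phi_{t,y}$ must put the small parameter on $u^2$ and the large factor $1/\epsilon'$ on $p^2$, the latter being absorbed into the left side through your bound $B\leq C\epsilon A$ with $\epsilon$ a small multiple of $\epsilon'$ (both still of order $\delta$); as stated, ``tuning $\epsilon,\epsilon'\sim\delta$'' glosses over this orientation, and the reverse split would leave an inadmissible large constant on $\int u^2\Phi_{t+C,y}$.
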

\begin{proof}
Let $\delta \in \left( 0,1\right]$ and~$\psi \in W_0^{1,\infty} (Q_R)$ be a cutoff function satisfying 
\begin{equation}
\label{e.cutoff.gc}
\indc_{Q_{3R/4}} \leq \psi \leq \indc_{Q_{R}}
\quad \mbox{and} \quad 
|\nabla \psi| \leq \frac{8}{R}.
\end{equation}
We first test the equation~\eqref{e.pde.withp} with
$u \psi^2 \Phi_{t,y}$ to obtain
\begin{align*}
\int_{Q_R} 
p u \psi^2 \Phi_{t,y} 
&
=
\int_{Q_R}
\left(  \psi^2 \Phi_{t,y} \nabla u\cdot \a \nabla u   + u \nabla ( \psi^2 \Phi_{t,y}) \cdot \a \nabla u \right)
\\ & \notag
\geq 
\frac12 \int_{Q_R}
\left| \nabla u  \right|^2 \psi^2 \Phi_{t,y} 
-C \int_{Q_R} u^2 \Phi_{t,y} \left( \psi^2  \frac{\left| \nabla  \Phi_{t,y}  \right|^2 }{\Phi_{t,y}^2 } + | \nabla \psi|^2 \right) .
\end{align*}
Rerranging and using Young's inequality, we get
\begin{align}
\label{e.testwithu}
\lefteqn{
\int_{Q_R}
| \nabla u|^2 \psi^2 \Phi_{t,y}  
} \qquad & 
\\ & \notag
\leq
\!\!\int_{Q_R} u^2 \,\bigg( 
\frac{\delta}{8}\psi^2 + C
\psi^2  \frac{\left| \nabla  \Phi_{t,y}  \right|^2 }{\Phi_{t,y}^2 } + C | \nabla \psi|^2 \bigg)\,\Phi_{t,y} 
+ \frac{2}{\delta} \int_{Q_R} p^2 \psi^2 \Phi_{t,y}.
\end{align}
Testing~\eqref{e.pde.withp} with $p\psi^2 \Phi_{t,y}$ and using Young's inequality, we get
\begin{align}
\label{e.testwithp}
\int_{Q_R} \! 
p^2  \psi^2 \Phi_{t,y} 
&
= 
\int_{Q_R} 
\a \nabla u \cdot \left( \psi^2 \nabla ( p \Phi_{t,y})   + 2\Phi_{t,y}  p \psi \nabla\psi \right)  
\\ & \notag
\leq 
\frac \delta{16} \!
\int_{Q_R} \!
| \nabla u|^2  \psi^2 \Phi_{t,y} 
+
\frac{C}{\delta} \!
\int_{Q_R} \!\!
\Phi_{t,y}\bigg(  \frac{ \left| \nabla  ( p \Phi_{t,y} )  \right|^2 }{\Phi_{t,y}^2 }  \psi^2  + p^2 |\nabla \psi|^2  \bigg).
\end{align}
Combining the two previous displays, we obtain
\begin{align}
\label{e.tricky.combine}
\int_{Q_R}
\left(
 \left| \nabla u  \right|^2
+
\frac1\delta p^2
\right) 
\psi^2 \Phi_{t,y} 
&
\leq 
 \int_{Q_R} u^2 \left( \frac{\delta}{4}+ C  \frac{\left| \nabla  \Phi_{t,y}  \right|^2 }{\Phi_{t,y}^2 }  \right) \psi^2 \Phi_{t,y} 
\\ & \qquad \notag
+
\frac{C}{\delta^{2}}
\int_{Q_R}
\left(  \frac{ \left| \nabla  ( p \Phi_{t,y} )  \right|^2 }{\Phi_{t,y}^2 }  \right) \psi^2  \Phi_{t,y} 
\\ & \qquad \notag
+ 
C \int_{Q_R}  \left(u^2 + \frac1{\delta^{2}} p^2 \right) 
| \nabla \psi|^2 \Phi_{t,y}.
\end{align}
We next estimate the three terms on the right. 

\smallskip

Observe that, for every $s \in (0,t]$, we have 
\begin{equation*}
\frac{ \left| \nabla\Phi_{t,y}(x)  \right|^2 }{\Phi_{t,y}^2(x) }
\frac{\Phi_{t,y}(x)}{\Phi_{t+s,y}(x)}
\leq
\frac{|x-y|^2}{4t^2}  \frac{\Phi_{t,y}(x)}{\Phi_{t+s,y}(x)} 
\leq 
\frac{2}{s} \left( \frac{s |x-y|^2}{8t^2} \exp \left( -  \frac{s|x-y|^2}{8t^2} \right) \right) 
\leq 
\frac{1}{s}.
\end{equation*}
Therefore, we can find~$C(\delta,d,\Lambda)<\infty$ sufficiently large that, for every~$y \in Q_{R/2}$ and $t \in \left[ C, R\right]$,  we can estimate the first term on the right side of~\eqref{e.tricky.combine} by
\begin{align}
\label{e.fatten.up}
\int_{Q_{3R/4}} u^2 \left( \frac{\delta}{4}+ C  \frac{\left| \nabla  \Phi_{t,y}  \right|^2 }{\Phi_{t,y}^2 }  \right) \psi^2 \Phi_{t,y}  & 
\leq 
\frac{\delta}{2} \int_{Q_{3R/4}} u^2 \Phi_{t+C,y}
\end{align}
Since $y \in Q_{R/2}$ and the fact that $\nabla \psi$ is supported in $Q_R \setminus Q_{3R/4}$, we estimate
\begin{multline}
\label{e.glotten.up}
\int_{Q_R \setminus Q_{3R/4}} 
u^2 \left( \frac{\delta}{2}+ C  \frac{\left| \nabla  \Phi_{t,y}  \right|^2 }{\Phi_{t,y}^2 }  \right) \psi^2 \Phi_{t,y}  
+ \int_{Q_R}
 (u^2 + \delta^{-2}p^2) |\nabla \psi|^2  \Phi_{t,y}
\\
\leq 
C \exp\left( - \frac{R^2}{Ct} \right)
\int_{Q_R} (u^2 + p^2) .
\end{multline}
Turning to the second term on the right side of~\eqref{e.tricky.combine}, we use Lemma~\ref{l.polynomial} to obtain, for every
$y \in Q_{R/2}$ and $t \in \left( 0 , (64 m)^{-1} R^2 \right]$ (note that $Q_{4 \sqrt{mt}}(y) \subset Q_{3R/4}$),
\begin{align}
\label{e.polyineq}
\int_{\R^d}
\frac{|\nabla (p\Phi_{t,y}) |^2}{\Phi_{t,y}^2} \Phi_{t,y} 
\leq
\frac{4dm}{t} \int_{\Rd} p^2 \Phi_{t,y}
\leq
\frac{8dm}{t} \int_{Q_{3R/4}} p^2 \Phi_{t,y}.
\end{align}
Combining the above inequalities~\eqref{e.tricky.combine},~\eqref{e.fatten.up},~\eqref{e.glotten.up} and~\eqref{e.polyineq}, we obtain 
\begin{align*}
\lefteqn{
\int_{Q_{3R/4}}
\Phi_{t,y} 
\left(
 \left| \nabla u  \right|^2
+
\frac1\delta p^2
\right) 
} \qquad & 
\\ & \notag
\leq
\frac{\delta}{2} \int_{Q_{3R/4}} u^2 \Phi_{t+C,y}
+
C \exp\left(-\frac{R^2}{Ct} \right) 
\int_{Q_{R}} \left( u^2 +p^2 \right)
+
\frac{Cm}{\delta^2 t} \int_{Q_{3R/4}} \Phi_{t,y} p^2 
.
\end{align*}
If $t \in [ 2C\delta^{-1} m,R]$, then the last term on the right may be reabsorbed on the left side, giving us the estimate
\begin{equation}
\label{e.tricky.approach}
\int_{Q_{3R/4}} \!\!
\Phi_{t,y} 
\left(
 \left| \nabla u  \right|^2
+
p^2
\right) 
\leq
\delta \int_{Q_{3R/4}} u^2 \Phi_{t+C,y}
+
 C \exp\left(-cR \right) 
\int_{Q_{R}} \left( u^2 +p^2 \right).
\end{equation}
It remains to estimate the second term on the right side of~\eqref{e.tricky.approach}. 

\smallskip

We proceed as above, this time testing the equation~\eqref{e.pde.withp} with $u \psi^2$ and $p \psi^2$ (but without the factor of $\Phi_{t,y}$). We obtain, respectively, for every $\theta \in (0,\infty)$, 
\begin{equation*} 
\int_{Q_{R}} |\nabla u|^2 \psi^2 \leq \frac{\theta R^2}{2}  \int_{Q_{R}} p^2 \psi^2  + C  \int_{Q_R} \left(\frac1{\theta R^2} + |\nabla \psi|^2 \right) u^2  
\end{equation*}
and
\begin{equation*} 
\int_{Q_{R}} p^2 \psi^2 
\leq 
\frac1{\theta R^2} \int_{Q_{R}} |\nabla u|^2 \psi^2 
+ 
C \theta R^2 \int_{Q_{R}} |\nabla p|^2 \psi^2  + C\theta \int_{Q_{R}} |\nabla \psi|^2 p^2.
\end{equation*}
Combining these and using the properties of~$\psi$ in~\eqref{e.cutoff.gc}, we obtain
\begin{equation*} 
\int_{Q_{3R/4}} p^2 
\leq 
C \theta R^2 \int_{Q_{R}} |\nabla p|^2 + C\theta  \int_{Q_{R}} p^2  
+ \frac{C}{\theta^2 R^4}\int_{Q_R} u^2 . 
\end{equation*}
Choosing $\theta := C^{-m}$ for $C(d)<\infty$, we may reabsorb the first two terms on the right side of the previous inequality to obtain
\begin{equation}
\label{e.poly.yomama}
\left\| p \right\|_{L^\infty(Q_R)}^2
\leq 
C^m \int_{Q_{3R/4}} p^2 
\leq \frac{C^{m}}{R^4} \int_{Q_R} u^2 . 
\end{equation}
Therefore, if $R \geq Cm$ for $C$ sufficiently large, we obtain
\begin{align}
\label{e.crude.pbound}
 \exp\left(-cR \right) 
\int_{Q_{R}} \left( u^2 +p^2 \right)
&
\leq
\left( 1 + C^{m} \right) \exp(-cR )  \int_{Q_{R}} u^2
\\ & \notag
\leq 
\exp\left(-\tfrac 12c R \right) \int_{Q_R} u^2.
\end{align}
Combining~\eqref{e.tricky.approach} and~\eqref{e.crude.pbound} and shrinking~$c$ yields the lemma. 
\end{proof}

We are now ready to give the proof of Theorem~\ref{t.analyticity}. 

\begin{proof}[{Proof of Theorem~\ref{t.analyticity}}]
We proved the statement of the theorem for $m\in\{0,1\}$ already in Lemma~\ref{l.C01C11}, so we may suppose $m\geq 2$. 
By Lemmas~\ref{l.corr.growth} and~\ref{l.diffbound}, there exists~$\psi \in \mathbb{A}_m$ satisfying 
\begin{equation}
\label{e.yourotherass}
D^k\hat{\psi}(0) = D^k\hat{u}(0) 
\quad
\mbox{for every} \ k\in\{0,\ldots,m\}
\end{equation}
and, for every $r\geq m$,
\begin{align}
\label{e.psi.control}
\left\| \psi \right\|_{\underline{L}^2(Q_r)}
&
\leq
\sum_{k=0}^m
\left( \frac{Cr}{k+1} \right)^k
\left| D^k\hat{u}(0) \right| 
\\ & \notag
\leq 
\sum_{k=0}^m
\left( \frac{Cr}{k+1} \right)^k
\left( \frac{C(k+1)}{r} \right)^k \left\| u \right\|_{\underline{L}^2(Q_r)}
\leq
C^m \left\| u \right\|_{\underline{L}^2(Q_r)}.
\end{align}
For convenience, we denote $v:= u - \psi$. 
Using~\eqref{e.yourotherass} and that $D^{m+1}\psi = 0$, we find that, for every $r\in [2m,\tfrac 12R]$, 
\begin{align}
\label{e.grid.hammer}
\sup_{z\in Q_r} 
\left| \hat{v} (z)  \right|
\leq 
\frac{Cr^{m+1}}{(m+1)!}  \sup_{z\in Q_{r+m+1}} \left| D^{m+1}\hat{u}(z) \right| 
\leq
\left( \frac{Cr}{R} \right)^{m+1} \left\| u \right\|_{\underline{L}^2(Q_R)}.
\end{align} 
By Lemma~\ref{l.jumpdown2}, for every $r\in [Cm,R]$ and $y\in Q_{R/2}$,
\begin{align}
\label{e.jumpdown.applied}
\int_{Q_{3R/4}} v^2 \Phi_{r,y}
&
\leq 
2 \int_{Q_{3R/4}}\hat{v}^2\Phi_{r,y}  
+
C \int_{Q_{3R/4}} \left| \nabla v \right|^2 \Phi_{r + 1,y}.
\end{align}
We may estimate the first term on the right side of~\eqref{e.jumpdown.applied}, using~\eqref{e.grid.hammer}, to get, for each $r\in [Cm,\tfrac1{8}R]$ and $y\in Q_r$, 
\begin{align}
\label{e.layercaking}
\lefteqn{
\int_{Q_{3R/4}} \hat{v}^2 \Phi_{r,y}
\leq 
\int_{Q_{7R/8}} \hat{v}(\cdot-y)^2 \Phi_{r}
} \qquad & 
\\ & \notag
=
\int_0^{2r} \int_{\partial Q_s} \hat{v}(\cdot-y)^2 \Phi_r\,ds
+
\int_{2r}^{7R/8} \int_{\partial Q_s} \hat{v}(\cdot-y)^2 \Phi_r\,ds
\\ & \notag
\leq 
\sup_{Q_{3r}} \hat{v}^2 \int_{Q_r} \Phi_r
+
\int_{2r}^{7R/8} 
\sup_{Q_{s+r}} \hat{v}^2 
\exp\left( -\frac{cs^2}{r}  \right) \,ds
\\ & \notag
\leq
\left\| u \right\|_{\underline{L}^2(Q_R)}^2
\left( 
\left( \frac{Cr}{R} \right)^{2(m+1)}
+
\int_{2r}^{7R/8}
\left( \frac{Cs}{R} \right)^{2(m+1)}
\exp\left( -\frac{cs^2}{r}  \right) \,ds
\right)
\\ & \notag
\leq 
\left( \frac{Cr}{R} \right)^{2(m+1)}
\left\| u \right\|_{\underline{L}^2(Q_R)}^2.
\end{align}
Indeed, the second integral on the fourth line of the display above can be estimated straightforwardly by changing variables as follows:
\begin{align*}
\int_{2r}^{7R/8}
\left( \frac{Cs}{R} \right)^{2(m+1)}
\exp\left( -\frac{cs^2}{r}  \right) \,ds
&
\leq
\left( \frac{Cr}{R} \right)^{2(m+1)} 
\int_1^\infty
s^{2(m+1)} \exp\left( -crs^2 \right)\,ds
\\ & 
\leq
\left( \frac{Cr}{R} \right)^{2(m+1)} 
\int_1^\infty
s^{2(m+1)} \exp\left( -m s^2 \right)\,ds
\\ & 
\leq 
\left( \frac{Cr}{R} \right)^{2(m+1)} 
\int_m^\infty
\frac1{2m} \left( \frac{s}{m} \right)^{m+\frac12} \exp(-s)\,ds
\\ & 
\leq 
\left( \frac{Cr}{R} \right)^{2(m+1)}.
\end{align*}
For second term on the right side~\eqref{e.jumpdown.applied}, we fix $\delta>0$ to be selected below and apply Lemma~\ref{l.gaussian.cacc}, noting that~$-\nabla\cdot \a \nabla v = -\nabla \cdot\a\nabla \psi \in \P_{m-2}$ in $B_R$, to obtain, under the condition that~$Cm\leq r\leq R$ for $C=C(\delta,d,\Lambda)$, the estimate
\begin{equation}
\label{e.applybighammer}
\int_{Q_{3R/4}}
\left(
 \left| \nabla v  \right|^2
+
p^2
\right) 
\Phi_{r + 1 ,y} 
\leq 
\delta \int_{Q_{3R/4}} v^2 \Phi_{r + 1 + C ,y} 
+
\exp\left( - c R \right) 
\left\| v \right\|_{\underline{L}^2(Q_R)}^2.
\end{equation}
Combining~\eqref{e.jumpdown.applied},~\eqref{e.layercaking} and~\eqref{e.applybighammer} and taking $\delta(d,\Lambda)>0$ sufficiently small, we get, for every $r\in [Cm,\tfrac1{8}R]$ and $y\in Q_r$, 
\begin{align*}
\lefteqn{
\int_{Q_{3R/4}} v^2 \Phi_{r,y}
} \qquad & 
\\ &
\leq
\frac12 \int_{Q_{3R/4}} v^2 \Phi_{r + C ,y} 
+
\left( \frac{Cr}{R} \right)^{2(m+1)}
\left\| u \right\|_{\underline{L}^2(Q_R)}^2
+  C \exp\left( - c R \right) \left\| v \right\|_{\underline{L}^2(Q_R)}^2.
\end{align*}
Integrating this over $y\in Q_r$ and using that $|Q_{r+C}| \leq \frac 32|Q_r|$ for $r\geq C$, we obtain, for every $r\in [Cm,\tfrac1{8}R]$,
\begin{align*}
\int_{Q_{3R/4}} v^2 \left( \frac{\indc_{Q_r}}{|Q_r|} \ast \Phi_{r} \right)
&
\leq 
\frac34
\int_{Q_{3R/4}} v^2 \left(\frac{\indc_{Q_{r+C}}}{|Q_{r+C}|} \ast \Phi_{r+C} \right)
\\ & \qquad 
+
\left( \frac{Cr}{R} \right)^{2(m+1)} \left\| u \right\|_{\underline{L}^2(Q_R)}^2
+  C \exp\left( - c R \right) \left\| v \right\|_{\underline{L}^2(Q_R)}^2.
\end{align*}
An iteration now yields, for every $r\in [Cm,\tfrac1{8}R]$, 
\begin{equation}
\int_{Q_{3R/4}} v^2 \left( \frac{\indc_{Q_r}}{|Q_r|} \ast \Phi_{r} \right)
\leq 
\left( \frac{Cr}{R} \right)^{2(m+1)} \left\| u \right\|_{\underline{L}^2(Q_R)}^2
+  C \exp\left( - c R \right)  \left\| v \right\|_{\underline{L}^2(Q_R)}^2.
\end{equation}
We deduce that, for every $r\in [Cm,\tfrac1{8}R]$,
\begin{equation}
\label{e.vboundsesxp}
\left\| v \right\|_{\underline{L}^2(Q_r)} 
\leq
\left( \frac{Cr}{R} \right)^{m+1} \left\| u \right\|_{\underline{L}^2(Q_R)}
+
C \exp\left( - c R \right)  \left\| v \right\|_{\underline{L}^2(Q_R)}.
\end{equation}
By~\eqref{e.psi.control} and $R\geq Cm$, we have that 
\begin{equation*}
\left\| v \right\|_{\underline{L}^2(Q_R)}
\leq
\left\| u \right\|_{\underline{L}^2(Q_R)}
+
\left\| \psi \right\|_{\underline{L}^2(Q_R)}
\leq 
C^m \left\| u \right\|_{\underline{L}^2(Q_R)}.
\end{equation*}
We therefore obtain, for every~$R\geq Cm$ and $r \in [Cm,R]$,
\begin{equation}
\exp\left( - c R \right)  \left\| v \right\|_{\underline{L}^2(Q_R)}
\leq
\exp(-cR) \left\| u \right\|_{\underline{L}^2(Q_R)}
\leq 
\left( \frac{Cr}{R} \right)^{m+1} \left\| u \right\|_{\underline{L}^2(Q_R)}.
\end{equation}
Combining this with~\eqref{e.vboundsesxp} and substituting $v=u-\psi$, we finally obtain, for every $r\in \left[Cm,\tfrac18 R \right]$, 
\begin{equation}
\label{e.upsi.yodama}
\left\| u - \psi \right\|_{\underline{L}^2(Q_r)} 
\leq
\left( \frac{Cr}{R} \right)^{m+1} \left\| u \right\|_{\underline{L}^2(Q_R)}.
\end{equation}
This is~\eqref{e.Cm1}, although the heterogeneous polynomial $\psi$ is not necessary~$\a(x)$--harmonic as in the statement of the theorem. 

\smallskip

To complete the proof, we must therefore replace $\psi$ by an element of~$\A_m$ which is $\a(x)$--harmonic. To do this, we must estimate~$p$. By~\eqref{e.poly.yomama} applied to~$u-\psi$ instead of~$u$, we have, for every $r \in \left[Cm,\tfrac18R\right]$, 
\begin{equation}
\left\| p \right\|_{L^\infty(Q_r)}
\leq 
\frac{C^m}{r^2} \left\| u - \psi \right\|_{\underline{L}^2(Q_r)}
\leq 
\frac1{r^2} \left( \frac{Cr}{R} \right)^{m+1} \left\| u \right\|_{\underline{L}^2(Q_R)}.
\end{equation}
Likewise, for every~$k\in\{1,\ldots,m\}$ and~$r\in\left[Cm,\tfrac1{16}R\right]$, we have 
\begin{align*}
\left\| D^kp \right\|_{L^\infty(Q_r)}
&
\leq 
\frac{C^{m-k}}{r^2} \left\| D^k(u - \psi) \right\|_{\underline{L}^2(Q_r)}
\\ & \notag
\leq 
\frac{C^{m-k}}{r^2} \left( \frac{Cr}{R} \right)^{m+1-k} 
\left\| D^ku \right\|_{\underline{L}^2\left(Q_{R/2} \right)}
\\ & \notag
\leq
\frac{C^{m-k}}{r^2} \left( \frac{Cr}{R} \right)^{m+1-k} 
\left( \frac{Ck}{R} \right)^k
\left\| u \right\|_{\underline{L}^2\left(Q_{R} \right)}
\\ & \notag
\leq
\frac1{r^2} 
\left( \frac{k}{r} \right)^k
\left( \frac{Cr}{R} \right)^{m+1}
\left\| u \right\|_{\underline{L}^2\left(Q_{R} \right)}.
\end{align*}
The first line above is obtained by applying~\eqref{e.poly.yomama} with $D^k(u-\psi)$ and $D^kp$ in place of $u$ and $p$,  
while the second line above is obtained by applying~\eqref{e.upsi.yodama} to $D^ku$ and $D^k\psi$ in place of $u$ and $\psi$ and the third line is an application of Lemma~\ref{l.diffbound}. By Lemma~\ref{l.polyhit.bound}, the bound~\eqref{e.polyhit.bounds.D}, and the previous two displays, we can find $\zeta \in \A_m$ such that $-\nabla \cdot \a\nabla \zeta = p$ and, for every $r \in \left[Cm,\tfrac 1{16}R\right]$,
\begin{equation*}
\left\| \zeta \right\|_{\underline{L}^2(Q_r)}
\leq
Cr^2 \sum_{n=0}^m \left( \frac{Cr}{n+1} \right)^{n} \left| D^n\hat{p}(0) \right|
\leq 
 \left( \frac{Cr}{R} \right)^{m+1} \left\| u \right\|_{\underline{L}^2(Q_R)}.
\end{equation*}
By the triangle inequality, we therefore obtain, for every~$r\in [Cm,\tfrac1{16}R]$, 
\begin{equation*}
\left\| u - \psi -\zeta \right\|_{\underline{L}^2(Q_r)} 
\leq
\left\| u - \psi  \right\|_{\underline{L}^2(Q_r)} 
+
\left\| \zeta \right\|_{\underline{L}^2(Q_r)} 
\leq
\left( \frac{Cr}{R} \right)^{m+1} \left\| u \right\|_{\underline{L}^2(Q_R)}.
\end{equation*}
Enlarging $C$, we obtain this inequality for every~$r\in [Cm,R]$. Renaming $\psi + \zeta$ to be $\psi$, we obtain the theorem. 
\end{proof}

\begin{remark}
\label{r.pick.psi}
Observe that, in addition to proving the statement of  Theorem~\ref{t.analyticity}, we also proved the estimate~\eqref{e.Cm1} for the unique $\psi\in \mathbb{A}_m$ which satisfies
\begin{equation}
D^k\hat{\psi}(0) = D^k\hat{u}(0), \quad \forall k\in\{0,\ldots,m\}. 
\end{equation}
Of course, this $\psi$ is not necessarily $\a(x)$--harmonic, unlike the one in the statement of the theorem. 
\end{remark}

\section{Consequences}
\label{s.conseq}

We conclude the paper by showing that Corollary~\ref{c.analyticity.exp} and Theorems~\ref{t.Liouville},~\ref{t.quc} and~\ref{t.bottom} are easy consequences of Theorem~\ref{t.analyticity} and some of the lemmas which were used to prove it. 

\smallskip

\begin{proof}[{Proof of Corollary~\ref{c.analyticity.exp}}]
Applying Theorem~\ref{t.analyticity} with $m=\left\lfloor \delta R \right\rfloor$, we obtain $\psi\in \A^0_m\subseteq \A_{\lfloor R \rfloor }^0$ such that, for every $s\in [C\delta R, R]$,  
\begin{equation}
\left\| u - \psi \right\|_{\underline{L}^2(Q_s)} 
\leq 
\left( \frac{Cs}{R} \right)^{m+1} 
\left\| u \right\|_{\underline{L}^2(Q_R)} .
\end{equation}
Taking $\delta(d,\Lambda)>0$ sufficiently small and setting $s: = C\delta R$, we obtain
\begin{equation}
\left\| u - \psi \right\|_{\underline{L}^2(Q_s)} 
\leq 
\left( \frac12 \right)^{m+1} \left\| u \right\|_{\underline{L}^2(Q_R)} 
\leq
\exp(-cR) \left\| u \right\|_{\underline{L}^2(Q_R)} . 
\end{equation}
By the $C^{0,1}$ estimate (Lemma~\ref{l.C01C11}), we have that, for every $r\in [1,s]$, 
\begin{equation}
\left\| u - \psi \right\|_{\underline{L}^2(Q_r)} 
\leq
C \left\| u - \psi \right\|_{\underline{L}^2(Q_s)}. 
\end{equation}
This completes the proof. 
\end{proof}

We begin with the proof Theorem~\ref{t.Liouville}.

\begin{proof}[Proof of Theorem~\ref{t.Liouville}]
Assume that $u\in H^1_{\mathrm{loc}}(\Rd)$ satisfies
\begin{equation}
\label{e.pde4}
-\nabla\cdot \a\nabla u = 0 \quad\mbox{in} \ \Rd
\end{equation}
and, for some $\delta\in (0,1]$, 
\begin{equation}
\label{e.slowexp.growth}
\limsup_{r\to \infty}\ \exp\left( - \delta r \right) \left\| u \right\|_{\underline{L}^2(Q_r)} 
\leq 
1.
\end{equation}
We will show that there exist $\delta_0(d,\Lambda)>0$ and $C(d,\Lambda)<\infty$ such that $\delta \leq \delta_0(d,\Lambda)$ implies that $u \in \A_\infty(C\delta)$. Let $C_1$ be the constant $C$ in the statement of Theorem~\ref{t.analyticity}.

\smallskip

By Lemma~\ref{l.diffbound}, for every $m\in\N$ and $r>m+2$, we have
\begin{equation}
\left| D^m\hat{u}(0) \right| 
\leq
\left( \frac{Cm}{r} \right)^m 
\left\| u \right\|_{\underline{L}^2(Q_r)}.
\end{equation}
If $\delta \in (0,\delta_0]$ with $\delta_0(d,\Lambda)>0$ sufficiently small, then we may take $r:=C\delta^{-1} m$ in the previous inequality and use~\eqref{e.slowexp.growth} to obtain, for~$m\in\N$ sufficiently large, 
\begin{equation}
\label{e.Dmhatu.bound}
\left| D^m\hat{u}(0) \right| 
\leq
\delta^m
\exp\left( Cm \right)
\leq 
\left( C\delta \right)^m.
\end{equation}
By Lemma~\ref{l.corr.growth}, for each $m\in\N$, there exists a unique element~$\psi_m$ of~$\A_m$ satisfying
\begin{equation*}
D^k \hat{\psi}_m (0) = D^k \hat{u}(0) \quad \forall k\in\{ 0,\ldots,m\}. 
\end{equation*}
Let $q_m\in\P_m$ be such that $\psi_m=\sum_{k=0}^m \nabla^k q_m:\phi^{(k)}$. 
By~\eqref{e.corr.growth.q} and~\eqref{e.Dmhatu.bound}, we have that, for every $n,m\in\N$ with $n\leq m$, 
\begin{align*}
\left| \nabla^n(q_m-q_{m+1}) (0) \right|
&
\leq 
\left( \frac{n+1}{m} \right)^{n} C^m \left| D^{m+1} \hat{\psi}_m (0) \right| 
\\ & 
\leq 
\left( \frac{n+1}{m} \right)^{n} C^m  (C\delta)^{m+1}
\leq (C\delta)^{m+1}
\end{align*}
and hence, if $\delta_0(d,\Lambda)>0$ is sufficiently small and $\delta \in (0,\delta_0]$,
\begin{equation}
\left| \nabla^n q_m(0) \right|
\leq 
\left| D^n\hat{u}(0) \right|
+
\sum_{k=n}^{m-1}
\left| \nabla^n(q_k-q_{k+1}) (0) \right|
\leq
\sum_{k=n}^{m}
\left( C\delta \right)^k \leq (C\delta)^n. 
\end{equation}
Thus for $\delta \in (0,\delta_0]$, we have that~$\nabla^nq_m(0)$ is uniformly bounded in~$m\geq n$ by $(C\delta)^n$ and converges as $m\to \infty$. In particular, there exists~$q\in \mathbb{P}_\infty(C\delta)$. 
 such that, for every $m,n\in\N$ with $n\leq m$, 
\begin{equation}
\label{e.vermicious.knid}
\left| \nabla^n q_m (0) - \nabla^n q(0)\right| 
\leq 
(C\delta)^m 
\longrightarrow 
0 
\quad \mbox{as} \ m\to \infty. 
\end{equation}
Define~$\psi\in \A_\infty(C\delta)$ by $\psi:=\sum_{k=0}^\infty \nabla^k q:\phi^{(k)}$. By~\eqref{e.psi.easybound} and~\eqref{e.vermicious.knid}, 
\begin{equation}
\label{e.psi.to.psim}
\left\| \psi_m - \psi  \right\|_{\underline{L}^2(Q_m)}
\leq (C\delta)^m.
\end{equation}
We next apply Theorem~\ref{t.analyticity} to get, for $m\in\N$, $R:= 3C_1^2m$ and $r\in \left[C_1m, R \right]$,
\begin{align*}
\left\| u - \psi_m \right\|_{\underline{L}^2(Q_r)}
\leq 
\left( \frac {C_1r}{3C_1^2m} \right)^{m+1} \left\| u  \right\|_{\underline{L}^2(B_R)}.
\end{align*}
Taking $r :=C_1m$ and applying~\eqref{e.slowexp.growth} yields, for all $m\in\N$ sufficiently large, 
\begin{equation}
\left\| u - \psi_m \right\|_{\underline{L}^2(Q_{C_1m} )}
\leq 
\left( \frac13 \right)^{m+1} \exp\left( 2\delta R \right)
=
\exp\left( -m + 4C_1^2m \delta  \right).
\end{equation}
Thus if~$\delta_0(d,\Lambda)>0$ is small enough, we obtain, for all $m$ sufficiently large, 
\begin{equation}
\left\| u - \psi_m \right\|_{\underline{L}^2(Q_{Cm})}
\leq 
\exp\left(-\frac12 m\right).
\end{equation}
Combining this with~\eqref{e.psi.to.psim} and sending $m \to \infty$ yields $u=\psi$.
\end{proof}

We turn next to the proof of Theorem~\ref{t.quc}. 

\begin{proof}[{Proof of Theorem~\ref{t.quc}}]
We may suppose without loss of generality that 
\begin{equation}
\label{e.your.ball.ass}
\left\| u \right\|_{\underline{L}^2(Q_r)} 
\leq
\left\| u \right\|_{\underline{L}^2(Q_R)},
\end{equation}
otherwise the result is immediate from the inequality $\left\| u \right\|_{\underline{L}^2(Q_s)} 
\leq 
C\left\| u \right\|_{\underline{L}^2(Q_R)}$.

\smallskip

Applying Theorem~\ref{t.analyticity} gives, for every $Cm\leq s \leq cR$, 
\begin{equation}
\left\| u - \psi_m \right\|_{\underline{L}^2(Q_s)}
\leq 
\left( \frac{Cs}{R} \right)^{m+1} \left\| u \right\|_{\underline{L}^2(Q_R)}.
\end{equation}
By Lemmas~\ref{l.corr.growth} and~\ref{l.diffbound}, for every $Cm<r<cs$, 
\begin{align}
\label{e.psi.gb}
\left\| \psi_m \right\|_{\underline{L}^2(Q_s)}
&
\leq
\sum_{k=0}^m
\left( \frac{Cs}{k+1} \right)^k
\left| D^k \hat{u}(0) \right|
\\ & \notag 
\leq
\sum_{k=0}^m
\left( \frac{Cs}{r} \right)^k
\left\| u \right\|_{\underline{L}^2(Q_r)} 
\leq
\left( \frac{Cs}{r} \right)^{m}
\left\| u \right\|_{\underline{L}^2(Q_r)}.
\end{align}
Thus, by the triangle inequality, for every $Cm\leq r \leq cs \leq c^2R$, 
\begin{align}
\label{e.trapped}
\left\| u  \right\|_{\underline{L}^2(Q_s)}
&
\leq
\left\| u - \psi_m \right\|_{\underline{L}^2(Q_s)}
+
\left\| \psi_m \right\|_{\underline{L}^2(Q_s)}
\\ & \notag
\leq
\left( \frac{Cs}{r} \right)^{m}
\left\| u \right\|_{\underline{L}^2(Q_r)}
+
\left( \frac{Cs}{R} \right)^{m+1}
\left\| u \right\|_{\underline{L}^2(Q_R)}.
\end{align}
Fix $\theta \in (0,c]$ small enough that, if~$C$ is the largest of the constants in~\eqref{e.trapped}, then~$C\theta \leq \tfrac12$. Applying~\eqref{e.trapped} with $r = \theta s = \theta^2R$ and with $m\in\N$ chosen to be the largest integer such that we have $Cm \leq r$ (ensuring that~\eqref{e.trapped} is valid) and
\begin{equation}
\label{e.cond2}
\left( \frac{C}{\theta} \right)^{2m} 
\leq 
\frac{\left\| u \right\|_{\underline{L}^2(Q_R)}}{4\left\| u \right\|_{\underline{L}^2(Q_r)}}.
\end{equation}
With these choices, we can bound the first term on the right side of~\eqref{e.trapped} by 
\begin{equation*}
\left( \frac{Cs}{r} \right)^{m}
\left\| u \right\|_{\underline{L}^2(Q_r)}
=
\left( \frac{C}{\theta} \right)^m
\left\| u \right\|_{\underline{L}^2(Q_r)}
\leq
\frac12
\left\| u \right\|_{\underline{L}^2(Q_r)}^{\frac12}
\left\| u \right\|_{\underline{L}^2(Q_R)}^{\frac12}.
\end{equation*}
For the second term, we break into two cases: either $C(m+1)>r$ or~\eqref{e.cond2} is false for $m+1$. If the former holds, then
\begin{align*}
\left( \frac{Cs}{R} \right)^{m+1}
\left\| u \right\|_{\underline{L}^2(Q_R)}
&
=
\left( C\theta \right)^{m+1} 
\left\| u \right\|_{\underline{L}^2(Q_R)}
\\ & 
\leq 
\left( \frac12 \right)^{m+1} 
\left\| u \right\|_{\underline{L}^2(Q_R)}
\leq 
\exp(-cr) 
\left\| u \right\|_{\underline{L}^2(Q_R)},
\end{align*}
while if the latter holds, then for each $\alpha \in (0,1)$, 
\begin{align*}
\left( \frac{Cs}{R} \right)^{m+1}
\left\| u \right\|_{\underline{L}^2(Q_R)}
&
=
\left( C\theta \right)^{m+1} 
\left\| u \right\|_{\underline{L}^2(Q_R)}
\\ & \notag
 \leq 
  \left( C\theta \right)^{m+1} \left\| u \right\|_{\underline{L}^2(Q_R)}^{1-\alpha} \left( \frac{C}{\theta} \right)^{2 \alpha m} \left\| u \right\|_{\underline{L}^2(Q_r)}^\alpha
\\ & 
=
C\theta  
\left( C^{1+2\alpha} \theta^{1-2\alpha} \right)^{m} 
\left\| u \right\|_{\underline{L}^2(Q_R)}^{1-\alpha} 
\left\| u \right\|_{\underline{L}^2(Q_r)}^\alpha.
\end{align*}
Using that
\begin{equation*} 
	C^2 \theta^{\frac12-\alpha} \leq 1 \quad \implies \quad C\theta \left( C^{1+2\alpha} \theta^{1-2\alpha} \right)^{m}  \leq \frac12,
\end{equation*}
taking $\theta(\alpha,d,\Lambda)\in (0,1)$ small enough and recalling~\eqref{e.your.ball.ass}, we may combine the above to obtain
\begin{align*}
\left\| u  \right\|_{\underline{L}^2(Q_s)}
\leq
\left\| u \right\|_{\underline{L}^2(Q_r)}^{\alpha}
\left\| u \right\|_{\underline{L}^2(Q_R)}^{1-\alpha}
+
\exp(-cr) \left\| u \right\|_{\underline{L}^2(Q_R)}.
\end{align*}
This completes the proof. 
\end{proof}

We conclude with proof of Theorem~\ref{t.bottom}.

\begin{proof}[{Proof of Theorem~\ref{t.bottom}}]
Suppose that $A\in [4,\infty)$ and $\lambda \in \left( 0,\delta_0^2\right]$, with $\delta_0\leq 1$ the constant in Theorem~\ref{t.Liouville} in $d+1$ dimensions, and suppose that $u\in H^1_{\mathrm{loc}}(\Rd)$ satisfies
\begin{equation}
-\nabla \cdot \a\nabla u = \lambda u \quad \mbox{in} \Rd
\end{equation}
and
\begin{equation}
\limsup_{r\to\infty} \,
\exp( Ar )
\left\| u \right\|_{\underline{L}^2(Q_r)}   = 0. 
\end{equation}
We will show that if $\lambda$ is sufficiently small and $A$ is sufficiently large, each depending only on $(d,\Lambda)$, then $u\equiv 0$. We may assume~$u \leq 1$. 

\smallskip

Add a dummy variable to~$u$ by defining
\begin{equation}
v(x,x_{d+1}) = \exp\left(\lambda^{\frac12} x_{d+1}\right) u(x)
\end{equation}
we observe that $v$ is a solution of the equation 
\begin{equation}
-\nabla \cdot \tilde\a \nabla v = 0 \quad \mbox{in} \ \R^{d+1},
\end{equation}
where $\tilde{\a}$ is the $(d+1)\times (d+1)$ matrix defined by
\begin{equation}
\tilde\a(x) := \begin{pmatrix} \a(x) & 0 \\ 0 & 1 \end{pmatrix}.
\end{equation}
Select $R\geq 10$ satisfying
\begin{equation}
\label{e.bare.ball}
\left\| v \right\|_{\underline{L}^2(B_R(2Re_1))}
\leq 
\exp\left( - AR + 2\lambda^{\frac12}R \right)
\leq 
\exp\left( -\tfrac12 AR\right).
\end{equation}
Let $c_1 \in \left(0,\tfrac12\right]$ and $C_1 \in [1,\infty)$ be constants to be selected below such that $c_1C_1<1$. Set $m:= \left\lfloor c_1 R \right\rfloor$ and note that $C_1 m < R$. If $c_1$ is sufficiently small, depending only on $(C_1,d,\Lambda)$, then we may apply Theorem~\ref{t.analyticity} to obtain $\psi\in \A_m^0$ satisfying, for every $S \geq R$ and $r\in \left[ C_1 m, S\right]$, 
\begin{equation}
\label{e.reg.app}
\left\| v - \psi \right\|_{\underline{L}^2(B_r)} 
\leq 
\left( \frac{Cr}{S} \right)^{m+1} 
\left\| v \right\|_{\underline{L}^2(B_{S})}. 
\end{equation}
Here~$\mathbb{A}_m$ is understood to be defined with respect to the coefficients~$\tilde{\a}$. 
By taking $r:= C_1 m$ and $S:= R$ we obtain, for $c_1(C_1,d,\Lambda)>0$ sufficiently small, 
\begin{equation*}
\left\| v - \psi \right\|_{\underline{L}^2(B_{C_1m}(2Re_1))} 
\leq 
\left( \frac{CC_1c_1R}{R} \right)^{m+1} 
\left\| v \right\|_{\underline{L}^2(B_R(2Re_1))}
\leq 
\frac12
\left\| v \right\|_{\underline{L}^2(B_R(2Re_1))}.
\end{equation*}
This implies by the triangle inequality and~\eqref{e.bare.ball} that 
\begin{equation}
\left\| \psi \right\|_{\underline{L}^2(B_{C_1m}(2Re_1))} 
\leq 
2\left( \frac{R}{C_1 m} \right)^{d/2} \left\| v \right\|_{\underline{L}^2(B_R(2Re_1))}
\leq \exp\left( -\tfrac 14AR \right)
\end{equation}
provided that $A$ is large enough. 
If $C_1(d,\Lambda)$ is taken sufficiently large, 
we deduce, by Lemmas~\ref{l.corr.growth} and~\ref{l.diffbound}, for every $r>C C_1m$, 
\begin{align}
\label{e.psi.nothing}
\left\| \psi \right\|_{\underline{L}^2(B_r(2Re_1))}
&
\leq
\sum_{k=0}^m
\left( \frac{Cr}{k+1} \right)^k
\left| D^k \hat{\psi}(2Re_1) \right|
\\ & \notag 
\leq
\sum_{k=0}^m
\left( \frac{Cr}{k+1} \right)^k
\left( \frac{Ck}{C_1m} \right)^k
\left\| \psi \right\|_{\underline{L}^2(B_{C_1m}(2Re_1))} 
\\ & \notag 
\leq
\left( \frac{Cr}{m} \right)^m
\exp\left( -\tfrac14 AR \right).
\end{align}
We now, once and for all, fix $C_1$ so that~\eqref{e.psi.nothing} holds and then fix~$c_1$ as above. 
Taking $r:=3R$ and $S:=C_2R$ in~\eqref{e.reg.app} for $C_2>10$, combining the resulting estimate with~\eqref{e.psi.nothing} and then taking $C_2$ to be a sufficiently large constant, depending only on $(d,\Lambda)$, we deduce that 
\begin{align*}
\left\| v \right\|_{\underline{L}^2(B_{R})} 
& 
\leq
C \left\| v \right\|_{\underline{L}^2(B_{3R}(2Re_1))}
\\ & 
\leq 
C \left\| v - \psi \right\|_{\underline{L}^2(B_{3R}(2Re_1))}
+ 
C \left\| \psi \right\|_{\underline{L}^2(B_{3R}(2Re_1))}
\\ & 
\leq 
\left( \frac{CR}{C_2R} \right)^{m+1}
\left\| v \right\|_{\underline{L}^2(B_{C_2R}(2Re_1))}
+
\left( \frac{CR}{m} \right)^m
\exp\left( -\tfrac14AR \right)
\\ & 
\leq 
\exp\left( -cR + C\lambda^{\frac12} R \right)
+
\exp\left( C R - \tfrac14AR \right).
\end{align*}
Taking $\lambda>0$ sufficiently small and $A>1$ sufficiently large, each depending only on~$(d,\Lambda)$, we obtain
\begin{equation}
\left\| v \right\|_{\underline{L}^2(B_R)}
\leq 
2 \exp\left( -cR \right).
\end{equation}
Sending $R\to \infty$ yields $v \equiv 0$ and thus $u\equiv 0$, completing the argument.
\end{proof}

\subsection*{Acknowledgments}
We thank Jonathan Goodman for showing us a Bloch wave perturbation proof of Theorem~\ref{t.bottom} and Yulia Meshkova for pointing us to the paper of Filonov~\cite{Fil}. C.S.~would like to thank Carlos Kenig and Dana Mendelson for many inspiring discussions and for alerting him to the interest of quantitative unique continuation for periodic operators. S.A.~was partially supported by the NSF award DMS-1700329. T.K.~was supported by the Academy of Finland and the European Research Council (ERC) under the European Union's Horizon 2020 research and innovation programme (grant agreement No 818437). C.S.~was partially supported by NSF award DMS-1712841.

\bibliographystyle{abbrv}
\bibliography{analyticity}

\end{document}